\newtheorem{teor}{Theorem}[section]
\newtheorem{lema}[teor]{Lemma}
\newtheorem{prop}[teor]{Proposition}
\newtheorem{fact}[teor]{Fact}
\newtheorem*{teorsin}{Theorem}
\newtheorem{defin}[teor]{Definition}
\newtheorem*{claim}{Claim}
\newtheorem*{prop*}{Proposition}
\newtheorem{cor}[teor]{Corollary}
\newtheorem{problem}[teor]{Problem}
\newtheorem{question}[teor]{Question}
\newtheorem{external claim}[teor]{Claim}
\newcommand{\bigslant}[2]{{\raisebox{.2em}{$#1$}\left/\raisebox{-.2em}{$#2$}\right.}}
\newcommand{\bslant}[2]{{\raisebox{.2em}{$#1$}/\raisebox{-.2em}{$#2$}}}
\newcommand{\C}{{\mathfrak C}}
\newcommand{\R}{\mathbb{R}}
\DeclareMathOperator{\tp}{{tp}}
\DeclareMathOperator{\qftp}{{tp^{qf}}}
\DeclareMathOperator{\Th}{{Th}}
\DeclareMathOperator{\aut}{{Aut}}
\DeclareMathOperator\dcl{dcl}
\DeclareMathOperator\auto{Aut}
\DeclareMathOperator\stab{Stab}
\DeclareMathOperator\dom{dom}
\newcommand{\orcidlogo}{\includegraphics[height=\fontcharht\font`\B]{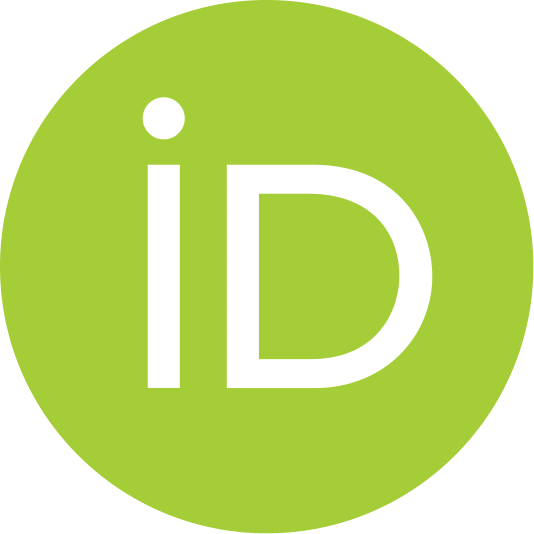}}
\newcommand{\orcid}[1]{\href{#1}{\orcidlogo #1}}
\DeclareMathOperator*\dcap{{\stackinset{r}{-0.35ex}{c}{-1.9pt}{\downarrow}
		{\bigcap}\mkern2mu}}
\title{Maximal stable quotients of invariant types in NIP theories}
\author{Krzysztof Krupi\'{n}ski}
\author{Adri\'{a}n Portillo}
\thanks{\noindent Both authors are supported by the Narodowe Centrum Nauki grant no. 2016/22/E/ST1/00450. The first author is also supported by  the Narodowe Centrum Nauki grant no. 2018/31/B/ST1/00357.}
\address{Instytut Matematyczny Uniwersytetu Wroc{\l}awskiego, pl. Grunwaldzki 2, 50-384 Wroc{\l}aw, Poland}
\address{Krzysztof Krupi\'{n}ski \orcid{https://orcid.org/0000-0002-2243-4411}
}
\email{Krzysztof.Krupinski@math.uni.wroc.pl}
\address{Adri\'{a}n Portillo \orcid{https://orcid.org/0000-0001-9354-8574}}
\email{Adrian.Portillo-Fernandez@math.uni.wroc.pl}
\keywords{Stable quotient, hyperimaginary, invariant type.}
\subjclass[2020]{03C45}
\begin{document}
	
	\begin{abstract}
		For a NIP theory $T$, a sufficiently saturated model $\C$ of $T$, and an invariant (over some small subset of $\C$) global type $p$, we prove that there exists a finest relatively type-definable over a small set of parameters from $\C$ equivalence relation on the set of realizations of $p$ which has stable quotient. This is a counterpart for equivalence relations of the main result of \cite{MR3796277} on the existence of maximal stable quotients of type-definable groups in NIP theories. Our proof adapts the ideas of the proof of this result, working with relatively type-definable subsets of the group of automorphisms of the monster model as defined in \cite{hrushovski2021order}.
	\end{abstract}
\maketitle
	\section{Introduction}	
	Stability theory, developed in the 1970s and 1980s, is a core part of model theory. 
One of the main goals of modern 
model theory is to extend various ideas and results of stability theory to appropriate unstable contexts; it is 
particularly important to distinguish interesting contexts extending stability. Two of the approaches are to either impose some general global assumptions on the theory (e.g., NIP, simplicity, $\textrm{NSOP}_1$) or some local ones (such as working with a stable definable set or generically stable type) in order to prove some structural results. Another ubiquitous strategy is to look at hyperdefinable sets 
(i.e. quotients by type-definable equivalence relations) and assume (or prove) their good properties (e.g., boundedness) to obtain further results. 

	

	Bounded quotients have been studied thoroughly and played an important role in model theory and its applications for many years (e.g., to approximate subgroups). However, stable quotients have not been studied so deeply. 
The project originates with a talk by Anand Pillay in Lyon in 2009 on finest stable hyperdefinable quotients in NIP theories. The case of type-definable groups was worked out in \cite{MR3796277}; some questions from \cite{MR3796277} were answered in \cite{10.1215/00294527-2022-0023}. The general case of first order theories will be 
studied in this paper.
%
One should mention that it is well known that hyperimaginaries can be treated as imaginaries in continuous logic, and stability of hyperdefinable sets is equivalent to stability (of imaginary sorts) in the sense of continuous logic. But we will not be using this 
approach in the present paper.
	

Let $T$ be a complete theory, $\C \models T$ a monster model (i.e., $\kappa$-saturated and strongly $\kappa$-homogeneous for a strong limit cardinal 
$\kappa> |T|$) in which we are working, and $A \subseteq \C$ a {\em small} set of parameters (i.e., $|A| < \kappa$); a cardinal $\gamma$ is {\em bounded} if  $\gamma < \kappa$. 
Let $X/E$ be a {\em hyperdefinable set over $A$}, i.e. $X$ is an $A$-type-definable set and $E$ an $A$-type-definable equivalence relation on $X$. The {\em complete type over $A$} of an element of $X/E$ can be defined as the $\aut(\C/A)$-orbit of that element, or the preimage of this orbit under the quotient map, or the partial type defining this preimage.
	
	\begin{defin}
		A hyperdefinable 
		over $A$ set $X/E$ is \emph{stable} if for every $A$-indiscernible sequence $(a_i,b_i)_{i<\omega}$ with $a_i\in X/E$ for all (equivalently, some) $i<\omega$, we have  
		$$\tp(a_i,b_j/A) = \tp(a_j,b_i/A)$$ for all (some) $i\neq j < \omega$.
	\end{defin}

Let $G$ be a $\emptyset$-$\bigwedge$-definable group. Since stability of hyperdefinable sets 
is invariant under type-definable bijections and closed under taking products and type-definable subsets (see \cite[Remark 1.4]{MR3796277}), it is clear that there always exists a smallest $A$-type-definable subgroup $G^{st}_A$ such that the quotient $G/G^{st}_A$ is stable. The main result of \cite{MR3796277} says that under NIP, $G^{st}_A$ does not depend on $A$, and so it is the smallest type-definable (over parameters) subgroup with stable quotient $G/G^{st}$. Moreover, it is  $\emptyset$-type-definable and normal.

In \cite{10.1215/00294527-2022-0023}, we gave several characterizations of stability 
for hyperdefinable sets both with and without assuming that the theory $T$ has NIP (see \cite[Section 2]{10.1215/00294527-2022-0023}). We also proved that in distal theories all stable hyperdefinable sets are bounded, 
i.e. of bounded size (\cite[Corollary 3.5]{10.1215/00294527-2022-0023}). 
In Proposition \ref{weak stability iff stability}, we will deduce another characterization of stability under NIP, via the so-called weak stability, generalizing \cite[Proposition 4.2]{https://doi.org/10.1002/malq.200610046} from the definable to the hyperdefinable context.

	When we lack the group structure, a natural counterpart of taking the quotient by a subgroup is to take 
	 the quotient by an equivalence relation. 
	Thus, it is natural to ask if similar results to the ones appearing in \cite{MR3796277} hold outside of the context of type-definable groups. 
However, the naive counterpart of \cite[Theorem 1.1]{MR3796277} is easily seen to be false. 
Namely, in general, for any non-stable type-definable set $X$ (e.g. the home sort of a non-stable theory), a finest type-definable (over an arbitrary small set of parameters)  equivalence relation on $X$ with stable quotient does not exist. The reason is that given any type-definable equivalence relation $E$ on $X$ with stable quotient, $E$ is not the relation of  equality, so we can find an $E$-class which contains at least two distinct elements $a$ and $b$. Then, the equivalence relation on $X$ being the intersection of $E$ and the relation $\equiv_a$ of having the same type over $a$ is strictly finer that $E$ and has stable quotient by \cite[Remark 1.4]{MR3796277} (as both $X/E$ and $X/\!\!\equiv_a$ are stable).	

	Let $\C\prec \C'$ be two monster models of a NIP theory $T$ 
	such that $\C$ is small in $\C'$.
	Recall that a {\em relatively type-definable over a (small) set of parameters $B$} subset of a set $Y$ is the intersection of $Y$ with a set which is type-definable over $B$. 
	The main result of this paper is the following theorem which will be proved in Section \ref{section: main theorem}.

	\begin{teorsin}
		Assume NIP. Let $p(x)\in S(\mathfrak{C})$ be an $A$-invariant type. Assume that $\C$ is at least $\beth_{(\beth_{2}(\lvert x \rvert+\lvert T \rvert + \lvert A\rvert))^+}$-saturated. Then, there exists a finest equivalence relation $E^{st}$ on $p(\C')$ 
		relatively type-definable over a small 
		(relative to $\C$) set of parameters of $\C$ and with stable quotient $p(\C')/E^{st}$.
	\end{teorsin}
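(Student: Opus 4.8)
The plan is to transfer the problem to the setting of relatively type-definable subgroups of the automorphism group, as developed in \cite{hrushovski2021order}, and there to mimic the proof of \cite[Theorem~1.1]{MR3796277}. First, for a \emph{fixed} small $B\subseteq\C$, a finest equivalence relation $E^{st}_B$ on $p(\C')$ relatively type-definable over $B$ and with stable quotient exists: take $E^{st}_B$ to be the intersection of all $B$-relatively-type-definable equivalence relations $E$ on $p(\C')$ with $p(\C')/E$ stable (this family is a set, so $E^{st}_B$ is again relatively type-definable over $B$); the diagonal embedding realizes $p(\C')/E^{st}_B$ as a relatively type-definable subset of the bounded product $\prod_E\bigl(p(\C')/E\bigr)$, and since stability of hyperdefinable sets is preserved under bounded products and under passing to relatively type-definable subsets (see \cite[Remark~1.4]{MR3796277} and \cite{10.1215/00294527-2022-0023}), $p(\C')/E^{st}_B$ is stable. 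This step is the analogue of the observation in \cite{MR3796277} that $G^{st}_A$ exists for each $A$, and uses neither NIP nor the saturation hypothesis.

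Next I would reformulate. Fix $a_0\models p$ in $\C'$. As $p$ is a complete type over $\C$ and $\C$ is small in $\C'$, the group $U:=\aut(\C'/\C)$ acts transitively on $p(\C')$ with $\stab_U(a_0)=\aut(\C'/\C a_0)$, and $\aut(\C'/B)$ leaves $p(\C')$ invariant for every small $B\subseteq\C$ (using $A$-invariance of $p$). Hence equivalence relations on $p(\C')$ relatively type-definable over a small subset of $\C$ correspond to relatively type-definable (in the sense of \cite{hrushovski2021order}) subgroups $H$ of $U$ containing $\stab_U(a_0)$, with $p(\C')/E$ stable iff the coset space $U/H$ is stable; so $E^{st}$ corresponds to the smallest such $H$ --- the $\stab_U(a_0)$-relative version of the maximal stable quotient of $U$. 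For this I would import from \cite{hrushovski2021order} the basic calculus of relatively type-definable subsets of $\aut(\C)$: compactness, images of type-definable sets under quotient maps, and existence of finest objects with a given property.

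The actual content of the theorem is that $E^{st}_B$ stabilizes as $B$ ranges over small subsets of $\C$: there is one small $B_0$ with $E^{st}_{B_0}\subseteq E^{st}_B$ for all small $B$, whence $E^{st}:=E^{st}_{B_0}$ works (and turns out to be canonical, hence invariant). This is where NIP is indispensable --- the naive statement is false without it, as noted in the introduction --- and I would prove it by adapting the parameter-independence argument for $G^{st}_A$ of \cite{MR3796277}: passing to $\aut(\C/A)$-translates of the subgroup $H_B$ attached to $E^{st}_B$, using preservation of stability under the relevant operations, and invoking NIP (via Shelah-style absoluteness of bounded-index connected components, or via generically stable types) to bound the complexity of the family $\{H_B\}_B$, so that $\bigcap_B H_B$ coincides with a bounded sub-intersection and is therefore still relatively type-definable over a small subset of $\C$. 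Keeping track of the sizes of the parameter sets and $A$-indiscernible sequences that appear is what produces the explicit saturation bound $\beth_{(\beth_2(|x|+|T|+|A|))^+}$, and Proposition~\ref{weak stability iff stability} can be used to reduce checking stability of quotients to the a priori weaker weak stability.

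I expect the crux to be this last, absoluteness step. On one side it must genuinely use NIP; on the other, transplanting the argument of \cite{MR3796277} is delicate precisely because $U=\aut(\C'/\C)$ is not a type-definable group inside a model, so the connected-component / bounded-index-DCC machinery and the bounded-sub-intersection phenomenon have to be reconstructed for relatively type-definable subgroups of the automorphism group in the framework of \cite{hrushovski2021order}. A secondary difficulty is verifying that the dictionary of the reformulation step --- equivalence relations versus subgroups, stability of $p(\C')/E$ versus stability of $U/H$ --- behaves well under changing the small set of parameters.
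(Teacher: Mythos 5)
Your overall strategy matches the paper's: adapt the argument of \cite{MR3796277} using relatively type-definable subsets of $\aut(\C')$ in the sense of \cite{hrushovski2021order}. Your first step is correct and is essentially how the paper begins (for a fixed small $B$, the intersection of all $B$-relatively-type-definable equivalence relations with stable quotient is again such a relation, by closure of stability under bounded products and relatively type-definable subsets). The problem is that everything after that is a description of what \emph{would} need to be proved rather than a proof, and the one step that carries the entire content of the theorem is missing. Concretely: you must show that the intersection of $E^{st}_B$ over \emph{all} small $B \subseteq \C$ coincides with a sub-intersection of size $< \nu := \beth_{(\beth_2(|x|+|T|+|A|))^+}$. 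After decomposing each relation into countably-relatively-defined pieces and counting types of the (countable) parameter tuples over $A$, this reduces to the following chain condition, which is where NIP genuinely enters: if $\pi(x,y,a_0)$ relatively defines an equivalence relation on $p\!\upharpoonright_{a_0}(\C')$ with stable quotient, then there is no sequence $(a_i)_{i<\nu}$ of $A$-conjugates of $a_0$ in $\C$ with $\bigcap_{j<i}E_{a_j}\cap p(\C')^2 \not\subseteq E_{a_i}$ for all $i<\nu$. Your proposal gestures at ``Shelah-style absoluteness of bounded-index connected components'' or ``generically stable types'' here, but neither is what is needed, and no argument is given. The actual mechanism is combinatorial: extract an $A$-indiscernible sequence from the putative chain, use stability of the quotient by $\bigcap_{\alpha\ne\beta}E_{a'_\alpha}$ to symmetrize the witnesses, and show that if $\bigcap_{j\ne i}E_{a_j}\cap p(\C')^2\not\subseteq\varphi(\C',\C',a_i)$ held for every $i$, one could build an IP pattern. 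That last implication requires the automorphism-group machinery in earnest: the sets $A_{\pi;a;a_i}(\C'/a_i)$ are the stabilizers of the classes $[a]_{E_{a_i}}$ and hence groups; a compactness statement produces $\theta$ with $A_{\pi;a;a_0}\cdot A_{\theta;a;a_0}\cdot A_{\pi;a;a_0}\subseteq A_{\varphi;a;a_0}$, uniformly in $i$ by conjugation; and then, choosing witnesses $\sigma_i\in A_{\bigwedge_{j\ne i}\pi;a;(a_j)_{j \ne i}}\setminus A_{\varphi;a;a_i}$ and forming finite products $\sigma_I$, one checks $\models\theta(\sigma_I(a),a,a_i)\iff i\notin I$, which is IP. None of this is in your proposal, and it does not follow from any cited general principle.

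A secondary gap is the dictionary in your reformulation step. Passing from an equivalence relation $E$ to the stabilizer of a class is sound and is used locally in the paper, but the converse direction of your claimed correspondence is not: a relatively type-definable subgroup of $\aut(\C'/\C)$ containing $\stab_U(a_0)$ need not come from a relation relatively type-definable over a \emph{small} subset of $\C$ (its defining parameters may genuinely need all of $\C$), and ``stability of $U/H$'' is not a defined notion you can transfer results about. The paper avoids this by never leaving the equivalence-relation formulation: the automorphism-group sets appear only as a device inside the IP lemma, not as the objects being classified. As written, your reduction to ``the smallest relatively type-definable $H\supseteq\stab_U(a_0)$ with stable coset space'' replaces the theorem by an unproved (and not obviously equivalent) statement. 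Finally, note that the quantitative bookkeeping you defer --- why $\nu$ must be at least $\beth_{(2^{2^{|T|+|A|}+|x|})^+}$ (to extract indiscernibles from the length-$\nu$ chain) and why $\nu$ must be bounded in $\C$ (so the sub-intersection is over a small parameter set) --- is exactly what forces the saturation hypothesis, so it cannot be waved away if the bound in the statement is to be justified.
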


	Our proof is via a non-trivial adaptation of the ideas from the proof of the main theorem of \cite{MR3796277}, using relatively type-definable subsets of the group of automorphisms of the monster model (as defined in \cite{hrushovski2021order}).

	We do not know whether $E^{st}$ is relatively type-definable over $A$. At the end of Section \ref{section: main theorem}, we will observe that if it was true, then the specific (large) saturation degree assumption  in the above theorem could be removed. Another question is whether one could drop the invariance of $p$ hypothesis from the above theorem. If such a strengthening is true, a proof would probably require some new tricks.

In Section \ref{section: basics}, we prove several basic results concerning the existence of finest relatively type-definable equivalence relations with stable quotients some of which are used in Section \ref{section: main theorem}, and we discuss the transfer of the existence of finest relatively type-definable equivalence relations with stable quotients between models. 
	
	In Section \ref{section: main theorem}, we prove the main theorem of this paper stated above. 

	In the last section, we compute $E^{st}$ in two concrete examples which are expansions of local orders. 
	In fact, in these examples, we give full classifications of all relatively type-definable 
	 over a small subset of $\C$ equivalence relations on $p(\C')$ for a suitable invariant type $p \in S(\C)$.\\

	
We finish the introduction presenting the framework of this paper.	
	Let $T$ be a complete first-order theory 
	of infinite models in a language $L$. Let $\C\prec\C'$ be models of  $T$ such that $\C$ is $\kappa$-saturated 
with a strong limit cardinal $\kappa>\lvert T\rvert$,  and $\C'$ is $\kappa'$-saturated and strongly $\kappa'$-homogeneous with a strong limit cardinal $\kappa'>\lvert \C\rvert$. We say that $\kappa$ is the {\em degree of saturation of $\C$} and $\kappa'$ is the {\em degree of saturation of $\C'$}. We say that a set is {\em $\C$-small} if its cardinality is smaller than $\kappa$ and {\em $\C'$-small} if its cardinality is smaller than $\kappa'$.
	Note that $|T|$ is the cardinality of the set of all formulas in $L$.
	Unless stated otherwise, $p(x)$ will always be a type in $S_x(\C)$ invariant over some $\C$-small $A\subseteq \C$, where $x$ is a $\C$-small tuple of variables. (In fact, instead of assuming that $\kappa$ is a strong limit cardinal, in Section 2 it is enough to assume that $\kappa >2^{|T|+|A|}$ and in Section 3 that $\kappa \geq \beth_{(2^{2^{|T| + |A|} +\lvert x \rvert})^+}$.) Whenever $B \subseteq \C'$, by $p \!\upharpoonright_B$ we mean the restriction to $B$ of the unique extension of $p$ to an $A$-invariant type in $S(\C')$. If $E$ is a type-definable equivalence relation and $a$ is an element of its domain, $[a]_E$ denotes the $E$-class of $a$.
	

	\section{Basic results and transfers between models}\label{section: basics}

	The goal of this section is to present a useful criterion that allows us to check whether a relatively type-definable over a $\C$-small $B\subseteq \C$ equivalence  relation $E$  on $p(\C')$ with stable quotient is, in fact, the finest one (see Lemma \ref{equivalence for being the finest}). As a corollary, we get the transfer to elementary extensions of $\C$ of the property of being the finest relatively type-definable equivalence relation on $p(\C')$ (see Corollary \ref{corollary: from C to C_1}). We also take the opportunity to prove  a new characterization of stability of hyperdefinable sets in NIP theories (see Proposition \ref{weak stability iff stability}). 
	
	Let $E$ be a type-definable equivalence relation on a type-definable subset $X$ of $\C^\lambda$, where $\lambda< \kappa$. 
	The following definition is the hyperimaginary analogous of \cite[Definition 1.2]{https://doi.org/10.1002/malq.200610046}.
	\begin{defin}
		A hyperdefinable (over $A$) set $X/E$ is \emph{weakly stable} if for every $A$-indiscernible sequence $(a_i,b_i,c)_{i<\omega}$ with $a_i,b_i\in X/E$ for all (equivalently, some) $i<\omega$, we have  
		$$\tp(a_i,b_j,c/A) = \tp(a_j,b_i,c/A)$$ for all (some) $i\neq j < \omega$.
	\end{defin}

We obtain a hyperdefinable counterpart of \cite[Proposition 4.2]{https://doi.org/10.1002/malq.200610046}.

\begin{prop}\label{weak stability iff stability}
		A hyperdefinable set $X/E$ which has NIP is weakly stable if and only if it is stable.
\end{prop}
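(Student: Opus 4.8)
The plan is to prove the two implications by completely different means: ``stable $\Rightarrow$ weakly stable'' will be a one-line formal consequence of the definitions, while ``weakly stable $\Rightarrow$ stable'' is the real content and the only place NIP is used. For the first implication: given an $A$-indiscernible sequence $(a_i,b_i,c)_{i<\omega}$ with $a_i,b_i\in X/E$, the sequence $(a_i,(b_i,c))_{i<\omega}$ is again $A$-indiscernible (because $c$ is constant along it) and has $a_i\in X/E$, so applying the definition of stability of $X/E$ to it gives $\tp(a_i,(b_j,c)/A)=\tp(a_j,(b_i,c)/A)$, that is, $\tp(a_i,b_j,c/A)=\tp(a_j,b_i,c/A)$, for all $i\neq j$. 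So I would concentrate all the effort on the converse.

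For the converse I would argue by contraposition: assuming $X/E$ has NIP and is not stable, I construct a witness against weak stability. Unwinding the definition of instability, and using that every $A$-indiscernible $\omega$-sequence extends to a $\mathbb{Z}$-indexed one, I obtain an $A$-indiscernible sequence $(a_i,b_i)_{i\in\mathbb{Z}}$ with $a_i\in X/E$ for which $q_{<}:=\tp(a_i,b_j/A)$ (for $i<j$) and $q_{>}:=\tp(a_i,b_j/A)$ (for $i>j$) are distinct. The key step, which I expect to be the only non-formal one, is to show that the sequence $(a_i)_{i\in\mathbb{Z}}$ on its own, forgetting the $b_i$'s, is already not totally $A$-indiscernible; equivalently, $\tp(a_0,a_1/A)\neq\tp(a_1,a_0/A)$.

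To prove this I would suppose the contrary. By the standard fact that an infinite $A$-indiscernible sequence with $\tp(a_0,a_1/A)=\tp(a_1,a_0/A)$ is totally $A$-indiscernible (this also holds for sequences of hyperimaginaries), the set $\{a_i:i\in\mathbb{Z}\}$ would then be an $A$-indiscernible \emph{set}, so for each bijection $\tau$ of $\mathbb{Z}$ there is $g\in\aut(\C/A)$ with $g(a_i)=a_{\tau(i)}$ for all $i$. Taking $\tau$ with $\tau(\mathbb{Z}^{<0})=2\mathbb{Z}\setminus\{0\}$, $\tau(0)=0$ and $\tau(\mathbb{Z}^{>0})=2\mathbb{Z}+1$, and putting $b':=g(b_0)$, one computes $\tp(a_j,b'/A)=\tp(a_{\tau^{-1}(j)},b_0/A)$, which equals $q_{<}$ for every nonzero even $j$ and $q_{>}$ for every odd $j$. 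Hence the $A$-indiscernible set $(a_j)_{j\geq 1}\subseteq X/E$, together with the single parameter $b'$, realizes the two distinct types $q_{<}$ and $q_{>}$ in an alternating pattern; pushing $b'$ through automorphisms realizing further permutations of this indiscernible set shatters every finite subset of $\{a_j:j\geq 1\}$ with respect to the partition $\{q_{<},q_{>}\}$, so by compactness $X/E$ has IP, contradicting the hypothesis. This establishes the key step. To finish, I apply weak stability to the $A$-indiscernible sequence $(a_i,a_i,a_{-1})_{i<\omega}$ (its first two coordinates lie in $X/E$, and it is $A$-indiscernible because $(a_i)_{i\in\{-1\}\cup\omega}$ is), obtaining $\tp(a_0,a_1,a_{-1}/A)=\tp(a_1,a_0,a_{-1}/A)$, hence $\tp(a_0,a_1/A)=\tp(a_1,a_0/A)$, which contradicts the key step; therefore $X/E$ is stable.

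The main obstacle is precisely the key step — passing from non-symmetry of the pair-sequence $(a_i,b_i)_i$ to non-total-indiscernibility of $(a_i)_i$ by itself — and this is where NIP is indispensable, via the principle that an external parameter cannot split an indiscernible set into two infinite parts without creating the independence property. A smaller, purely bookkeeping point is to make the final step conform to the literal form of the definition of weak stability; above I handle it by carrying the dummy parameter $a_{-1}$, but one could instead first observe that weak stability trivially forces every $A$-indiscernible sequence in $X/E$ to be totally $A$-indiscernible (take $b_i:=a_i$).
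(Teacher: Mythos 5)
Your first implication is fine and matches the paper. The genuine problem is in the key step of the converse: you invoke the ``standard fact'' that an infinite $A$-indiscernible sequence with $\tp(a_0,a_1/A)=\tp(a_1,a_0/A)$ is totally $A$-indiscernible. This is false. Symmetry of the bare $2$-type over $A$ does not suffice; the correct criterion is that $\tp(a_i,a_j/A\,a_J)=\tp(a_j,a_i/A\,a_J)$ for every finite $J$ disjoint from $\{i,j\}$, i.e.\ the transposition must be tested over the remaining terms of the sequence. A counterexample is the dense circular order (which is NIP, so NIP cannot rescue the step): a sequence $(a_i)$ going once around the circle is indiscernible, any two of its points are conjugate in either order (the two arcs they cut out are isomorphic dense linear orders), yet $C(a_0,a_1,a_2)\wedge\neg C(a_1,a_0,a_2)$. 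Consequently, what your NIP/IP argument can legitimately deliver from instability is only that $(a_i)_i$ is not \emph{totally} indiscernible --- the asymmetry may be visible only over finitely many further terms of the sequence --- and not the stronger conclusion $\tp(a_0,a_1/A)\neq\tp(a_1,a_0/A)$. Your final appeal to weak stability via $(a_i,a_i,a_{-1})_{i<\omega}$ only refutes asymmetry of the bare $2$-type, so the argument does not close. The closing parenthetical (``take $b_i:=a_i$'') fails for the same reason.

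This is exactly where the paper's proof does its real work. It black-boxes the NIP input by quoting Theorem 2.10 of the authors' earlier paper (under NIP, $X/E$ is stable iff every indiscernible sequence in $X/E$ is totally indiscernible), so it only remains to show that weak stability forces total indiscernibility. Given an indiscernible but not totally indiscernible $(a_i)$, it re-indexes by $\mathbb{Q}$, locates an adjacent transposition that changes the type over the finite set $A'$ of the other terms involved, and then passes to a sub-sequence $(b_i)$ lying inside the gap $(j,j+1)$: this sub-sequence is still indiscernible over $A'$ and pairwise asymmetric over $A'$, so $(b_i,b_i,a')_{i<\omega}$ (with $a'$ enumerating $A'$) violates weak stability, the point being that the parameter $c:=a'$ in the definition of weak stability is precisely what absorbs the extra terms of the sequence. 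To repair your proof you would need to replace the false reduction by this kind of bookkeeping (and either cite or reprove the NIP characterization your IP argument is implicitly reconstructing).
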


\begin{proof}
	Without loss of generality, assume that both $X$ and $E$ are type-definable over the empty set.
	
	It is clear that stable sets are weakly stable, even without the $NIP$ assumption. By \cite[Theorem 2.10]{10.1215/00294527-2022-0023}, under the $NIP$ assumption, the stability of $X/E$ is equivalent to the fact that every indiscernible sequence of elements of $X/E$ is totally indiscernible. 
	Hence, it is enough to show that weak stability of $X/E$ also implies this property.
		
		Suppose that the sequence $(a_i)_{i<\omega}$ in $X/E$ is indiscernible but not totally indiscernible. Let us, without loss of generality, replace $\omega$ by $\mathbb{Q}$. Then, there exist a natural number $n$ and 
		 $j<n-1$ such that $$\tp(a_{j},a_{{j+1}}/A)\neq \tp(a_{{j+1}},a_{j}/A),$$ where $A$ is the set of all $a_{k}$ for $k<n$ distinct from $j$ and $j+1$. Choose any rationals $l_0<l_1<\dots$ in the interval $(j,{j+1})$. Let $b_i:=a_{l_i}$ for $i<\omega$. Then, the sequence $(b_i)_{i<\omega}$ is $A$-indiscernible and $\tp(b_i,b_j/A)\neq \tp(b_j,b_i/A)$ for all $i<j<\omega$.
Let $a$ be an enumeration of $A$.
We conclude that the sequence $(b_i,b_i,a)_{i<\omega}$ contradicts the weak stability of $X/E$.
\end{proof}
 	
	Next, we present a definition that we use throughout the whole section. This definition first appeared in \cite[Definition 3.2]{doi:10.1142/S0219061319500120}.
	\begin{defin}
		Let $A\subseteq \mathcal{M}\subseteq B$ and $q(x)\in S(B)$.
 We say that $q(x)$ is a \emph{strong heir extension over A} of $q\!\upharpoonright_\mathcal{M}(x)$ if   for all finite $m\subseteq \mathcal{M}$ 
 $$(\forall \varphi(x,y)\in L)(\forall b\subseteq B)[ \varphi(x,b)\in q(x) \implies (\exists b'\subseteq \mathcal{M})(\varphi(x,b')\in q(x)\wedge b\underset{Am}{\equiv}b') ].$$
	\end{defin}

Note that if $q \in S(\C)$ is a strong heir extension over $A$ of $q\!\upharpoonright_\mathcal{M}(x)$, then $\mathcal{M}$ is an $\aleph_0$-saturated model in the language $L_A$ (i.e., $L$ expanded by constants from $A$). Conversely, if $\mathcal{M}$ is an $\aleph_0$-saturated model in $L_A$ and $q(x)\in S(\mathcal{M})$, there always exists $q'(x)\in S(B)$ which is a strong heir over $A$ of $q$ (see \cite[Lemma 3.3]{doi:10.1142/S0219061319500120}). 

	\begin{lema}\label{strong heir invariance}
		Assume that $q(x)\in S(\mathcal{M})$ is $A$-invariant (for some $A\subseteq \mathcal{M}$) and $q'(x)\in S(\mathfrak{C})$ is a strong heir extension over $A$ of $q(x)$. 
 Then $q'(x)$ is the unique global $A$-invariant extension of $q(x)$.
	\end{lema}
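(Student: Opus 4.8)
The plan is to establish separately that $q'$ is $A$-invariant and that it is the only global $A$-invariant type extending $q$. Throughout, ``$A$-invariant'' (for a type over $\mathcal{M}$ as well as for a global type) is read as the non-splitting condition: whether $\varphi(x,b)$ belongs to the type depends only on $\tp(b/A)$. Recall that, by definition, a strong heir extension $q'$ of $q$ satisfies $q'\!\upharpoonright_\mathcal{M}=q$, so $q'$ is in particular an extension of $q$. Beyond this, the only input I will use is the instance $m=\emptyset$ of the strong heir hypothesis.

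To see that $q'$ is $A$-invariant, suppose not: there are a formula $\varphi(x,y)$ and tuples $b\equiv_A b'$ from $\C$ with $\varphi(x,b)\in q'$ and $\neg\varphi(x,b')\in q'$. I would apply the strong heir condition with $m=\emptyset$ to the formula $\psi(x;y,y'):=\varphi(x,y)\wedge\neg\varphi(x,y')$ and the parameter tuple $(b,b')$: from $\psi(x;b,b')\in q'$ one obtains a tuple $(c,c')$ from $\mathcal{M}$ with $\psi(x;c,c')\in q'$ and $(b,b')\equiv_A(c,c')$. The second conclusion gives $c\equiv_A b\equiv_A b'\equiv_A c'$, while the first, restricted to $\mathcal{M}$, gives $\varphi(x,c)\in q$ and $\neg\varphi(x,c')\in q$; this contradicts the $A$-invariance of $q$.

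For uniqueness, let $r\in S(\C)$ be any global $A$-invariant extension of $q$, so that $r\!\upharpoonright_\mathcal{M}=q$; it suffices to show $q'\subseteq r$. Given $\varphi(x,b)\in q'$, the strong heir condition with $m=\emptyset$ provides a tuple $b'$ from $\mathcal{M}$ with $\varphi(x,b')\in q'$ and $b\equiv_A b'$. Then $\varphi(x,b')\in q'\!\upharpoonright_\mathcal{M}=q=r\!\upharpoonright_\mathcal{M}$, so $\varphi(x,b')\in r$, and $A$-invariance of $r$ together with $b\equiv_A b'$ forces $\varphi(x,b)\in r$. Hence $q'\subseteq r$, and since both are complete global types, $q'=r$.

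I do not expect a serious obstacle here: the only point that needs care is keeping the conventions straight (in particular reading $A$-invariance as the non-splitting condition, for types over $\mathcal{M}$ and not just global ones), after which both the invariance and the uniqueness of $q'$ drop straight out of the $m=\emptyset$ case of the strong heir condition, which was tailored for exactly this kind of statement.
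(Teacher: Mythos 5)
Your proof is correct and follows essentially the same route as the paper: for invariance, pull the witnessing parameters of a splitting back into $\mathcal{M}$ via the strong heir condition and contradict the $A$-invariance of $q$; for uniqueness, reduce membership of $\varphi(x,b)$ in any $A$-invariant global extension to membership of $\varphi(x,b')$ in $q$ for some $b'\in\mathcal{M}$ with $b'\equiv_A b$. The only cosmetic difference is that the paper obtains such a $b'$ from the $\aleph_0$-saturation of $\mathcal{M}$ in $L_A$ (itself a consequence of the strong heir hypothesis), whereas you extract it directly from the $m=\emptyset$ instance of that hypothesis.
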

\begin{proof}
		
To show $A$-invariance, suppose for a contradiction that there are $a,b$ and $\varphi(x,a)\in q'(x)$ with $a \underset{A}\equiv b$ and $\neg \varphi(x,b)\in q'(x)$. Then, there exist $a',b'\in \mathcal{M}$ such that $a'\underset{A}{\equiv} a$ and $b' \underset{A}{\equiv} b$ for which $\varphi(x,a') \in q(x)$ while $\neg \varphi(x,b')\in q(x)$. Then $a' \underset{A}{\equiv} b'$, which contradicts the $A$-invariance of $q(x)$.
		
		Uniqueness follows from the fact that $\mathcal{M}$ is $\aleph_0$-saturated in $L_A$.
\end{proof}

	Given a partial type (possibly with parameters) $\pi(x,y)$, we say that $\pi(x,y)$ \emph{relatively defines an equivalence relation} on a type-definable set $X$ if $\pi(\C',\C')\cap X(\C')^2$ is an equivalence relation. 
	Given a type-definable equivalence relation $E$ on a type-definable set $X$, a \emph{partial type relatively defining} $E$ is any partial type $\pi(x,y)$ such that $\pi(\C',\C')\cap X(\C')^2 = E$. 
	We say that a type-definable equivalence relation $E$ on a type-definable set $X$ is \emph{countably relatively defined} if some partial type $\pi(x,y)$ relatively defining it consists of countably many formulas, and we say that $E$ is {\em relatively type-definable over $B$} (or {\em $B$-relatively type-definable}) if it is relatively defined by a partial type over $B$.
	
	Lemma \ref{reduction to a small model} gives us a useful stability criterion when an equivalence relation on $p(\C')$ is relatively type-definable over a sufficiently saturated model.

\begin{lema}\label{reduction to a small model}
	Let $\mathcal{M}\prec \mathfrak{C}$ be $\aleph_0$-saturated in $L_A$, and $\pi(x,y)$ a partial type over $\mathcal{M}$ relatively defining an equivalence relation on $p\!\upharpoonright_{\mathcal{M}}(\mathfrak{C}')$. Then, $\pi$ relatively defines an equivalence relation on $p(\mathfrak{C}')$ with stable quotient if and only if it relatively defines an equivalence relation on $p\!\upharpoonright_{\mathcal{M}}(\mathfrak{C}')$ with stable quotient.
\end{lema}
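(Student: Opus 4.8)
The plan is the following. First, note that the asymmetry in the statement concerns only stability of the quotient. Since $\mathcal{M}\subseteq\C$, every realization of $p$ realizes $p\!\upharpoonright_{\mathcal{M}}$, so $p(\C')\subseteq p\!\upharpoonright_{\mathcal{M}}(\C')$, and hence the restriction to $p(\C')$ of the equivalence relation $E':=\pi(\C',\C')\cap p\!\upharpoonright_{\mathcal{M}}(\C')^2$ on $p\!\upharpoonright_{\mathcal{M}}(\C')$ is automatically an equivalence relation $E:=\pi(\C',\C')\cap p(\C')^2$ on $p(\C')$; moreover $E=E'\cap p(\C')^2$. Consequently, for $c\models p$ the assignment $[c]_E\mapsto[c]_{E'}$ is a well-defined injection which is $\aut(\C'/D)$-equivariant for every $D$ with $\mathcal{M}\subseteq D\subseteq\C'$; therefore, for realizations $c,c'$ of $p$ and arbitrary tuples $b,b'$, we have $\tp([c]_E,b/D)=\tp([c']_E,b'/D)$ if and only if $\tp([c]_{E'},b/D)=\tp([c']_{E'},b'/D)$. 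Since stability of a hyperdefinable set does not depend on the parameters over which it is presented, it remains to prove that $p(\C')/E$ is stable if and only if $p\!\upharpoonright_{\mathcal{M}}(\C')/E'$ is, and both may be tested over $\C$.

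The implication ``$p\!\upharpoonright_{\mathcal{M}}(\C')/E'$ stable $\Rightarrow$ $p(\C')/E$ stable'' I would prove by contraposition, and it is immediate: a $\C$-indiscernible sequence $(c_i,b_i)_{i<\omega}$ with $c_i\models p$ and $\tp([c_i]_E,b_j/\C)\neq\tp([c_j]_E,b_i/\C)$ for $i\neq j$ (witnessing instability of $p(\C')/E$) lies verbatim in $p\!\upharpoonright_{\mathcal{M}}(\C')$, is still $\C$-indiscernible, and by the equivalence recorded above witnesses instability of $p\!\upharpoonright_{\mathcal{M}}(\C')/E'$.

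The substance is in the converse, ``$p(\C')/E$ stable $\Rightarrow$ $p\!\upharpoonright_{\mathcal{M}}(\C')/E'$ stable'', again by contraposition, and this is where $\aleph_0$-saturation of $\mathcal{M}$ in $L_A$ is used, via strong heir extensions. Assume instability of $p\!\upharpoonright_{\mathcal{M}}(\C')/E'$; choosing representatives and making a standard reduction, this is witnessed by an $\mathcal{M}$-indiscernible sequence $(c_i,b_i)_{i<\omega}$ with $c_i\models p\!\upharpoonright_{\mathcal{M}}$, the $b_i$ small real tuples, and $\tp([c_i]_{E'},b_j/\mathcal{M})\neq\tp([c_j]_{E'},b_i/\mathcal{M})$ for $i\neq j$. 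Let $q:=\tp((c_i,b_i)_{i<\omega}/\mathcal{M})$ and choose $q'\in S(\C)$ a strong heir extension over $A$ of $q$ (which exists by the remark following the definition of strong heir extension, as $\mathcal{M}$ is $\aleph_0$-saturated in $L_A$). I claim that any $(c'_i,b'_i)_{i<\omega}\models q'$ witnesses instability of $p(\C')/E$, to be checked in three steps. (i) $\C$-indiscernibility: if a formula $\varphi(x_{i_1},y_{i_1},\dots,x_{i_n},y_{i_n},d)\wedge\neg\varphi(x_{j_1},y_{j_1},\dots,x_{j_n},y_{j_n},d)$ with $d\subseteq\C$ witnessed a failure, it would lie in $q'$, so by the strong heir property the analogous formula with some $d'\subseteq\mathcal{M}$ would lie in $q'\!\upharpoonright_{\mathcal{M}}=q$, contradicting $\mathcal{M}$-indiscernibility of $(c_i,b_i)$. (ii) $c'_i\models p$: the type $\tp(c'_i/\C)$ is a strong heir extension over $A$ of $\tp(c_i/\mathcal{M})=p\!\upharpoonright_{\mathcal{M}}$ (restricting a strong heir extension to a subtuple of the variables is again one), and $p\!\upharpoonright_{\mathcal{M}}$ is $A$-invariant, so by Lemma \ref{strong heir invariance} it is the unique global $A$-invariant extension of $p\!\upharpoonright_{\mathcal{M}}$, namely $p$. (iii) $\tp([c'_i]_E,b'_j/\C)\neq\tp([c'_j]_E,b'_i/\C)$ for $i\neq j$: since $q'\supseteq q$ we have $(c'_i,b'_i)_{i}\equiv_{\mathcal{M}}(c_i,b_i)_{i}$, so the inequality of $E'$-class types transfers to $(c'_i,b'_i)$ over $\mathcal{M}$, hence over $\C$, and by (ii) and the equivalence from the first paragraph it transfers to $E$-classes. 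Steps (i)--(iii) show $p(\C')/E$ unstable.

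I expect the main obstacle to be this converse direction, specifically the need to transport a counterexample living over the small model $\mathcal{M}$ up to the full parameter set $\C$ without destroying indiscernibility, without moving the first coordinates out of $p$, and without collapsing the separating inequality. An arbitrary extension of the relevant type---or even an ordinary heir---would not control the type of the first coordinates over $\C$; the crucial fact is that a strong heir extension over $A$ exists and, by Lemma \ref{strong heir invariance}, is forced to have the prescribed $A$-invariant restriction, which makes step (ii) go through, while its heir-like property gives step (i). The remaining bookkeeping---that being a strong heir extension over $A$ passes to subtuples of the variables, and that (i)--(iii) indeed yield instability---is routine.
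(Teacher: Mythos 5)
Your proposal is correct and follows essentially the same route as the paper: the easy direction is the verbatim restriction of a witnessing sequence, and the substantive direction transports an $\mathcal{M}$-indiscernible witness to $\C$ via a strong heir extension over $A$, using Lemma \ref{strong heir invariance} to force the first coordinates into $p$ and the heir property to preserve indiscernibility and the separating inequality. The only cosmetic difference is that you isolate the class-correspondence $[c]_E\mapsto[c]_{E'}$ and the resulting equivalence of types over $\C$ as an explicit preliminary step (your blanket equivariance claim for all $D\supseteq\mathcal{M}$ is slightly overstated, but you only use it over $\C$, where it is correct), whereas the paper folds this into its verification of item (3).
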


	\begin{proof}
		Firstly, note that since $\pi$ relatively defines an equivalence relation on the set $p\!\upharpoonright_{\mathcal{M}}\!\!(\C')$,  it relatively defines an equivalence relation on $p(\C')$. Let $E$ be the equivalence relation relatively defined by $\pi(x,y)$ on $p(\C')$ and let $E'$ be the equivalence relation relatively defined by $\pi(x,y)$ on $p\!\upharpoonright_{\mathcal{M}}\!\!(\C')$.
	
	Assume first that $p(\C')/E$ is unstable. Then, there exists a $\C$-indiscernible sequence $(c_i,b_i)_{i<\omega}$ such that $c_i\in p(\C')$ for all $i<\omega$ and for all $i\neq j$ 
	$$\tp\left(\bigslant{[c_i]_{E},b_j}{\C}\right)\neq \tp\left(\bigslant{[c_j]_{E},b_i}{\C}\right).$$ 
	This implies that for all $i\neq j$ we have $$ \tp\left(\bigslant{[c_i]_{E'},b_j}{\C}\right)\neq \tp\left(\bigslant{[c_j]_{E'},b_i}{\C}\right),$$
	and so  $p\!\upharpoonright_{\mathcal{M}}\!\!(\mathfrak{C}')/E'$ is unstable.
	
	Assume now that $p\!\upharpoonright_{\mathcal{M}}\!\!(\mathfrak{C}')/E'$ is unstable. This is witnessed by an $\mathcal{M}$-indiscernible sequence $(c_i,b_i)_{i<\omega}$ such that $c_i\in p\!\upharpoonright_{\mathcal{M}}\!\!(\C')$ for all $i<\omega$ and for all $i\neq j$ $$ \tp\left(\bigslant{[c_i]_{E'},b_j}{\mathcal{M}}\right)\neq \tp\left(\bigslant{[c_j]_{E'},b_i}{\mathcal{M}}\right).$$ 
	Consider $q:=\tp\left(\bigslant{(c_i,b_i)_{i<\omega}}{\mathcal{M}}\right)$ and let $q'\in S(\C)$ be a strong heir extension over $A$ of $q$. Let $(c_i',b_i')_{i<\omega}$ be a realization of $q'$. Then, 
	\begin{enumerate}
		\item $(c_i',b_i')_{i<\omega}$ is $\C$-indiscernible;
		\item $\tp(c_i'/\C)=p(x)$ for all $i<\omega$;
		\item $\tp\left(\bigslant{[c'_i]_{E},b'_j}{\C}\right)\neq \tp\left(\bigslant{[c_j']_{E},b'_i}{\C}\right)$ for all $i \ne j$.
	\end{enumerate}
	
	$(1)$ Suppose for a contradiction that $(1)$ does not hold. Then, it is witnessed by a formula (with parameters $d$ from $\C$) of the form $\varphi(x_{i_1},y_{i_1},\dots,x_{i_n},y_{i_n},d)\wedge \neg \varphi(x_{j_1},y_{j_1},\dots,x_{j_n},y_{j_n},d)$, for some $i_1<\dots<i_n$ and $j_1<\dots<j_n$. 
	Now, using that $q'$ is a strong heir extension over $A$ of $q$, we can find $d'\subseteq \mathcal{M}$ such that $$\varphi(x_{i_1},y_{i_1},\dots,x_{i_n},y_{i_n},d')\wedge \neg \varphi(x_{j_1},y_{j_1},\dots,x_{j_n},y_{j_n},d')\in q,$$ contradicting the $\mathcal{M}$-indiscernibility of  $(c_i,b_i)_{i<\omega}$. 
	
	$(2)$ follows from the fact that $\tp(c_i'/\C)$ is a strong heir extension over $A$ of $p\!\upharpoonright_{\mathcal{M}}\!\!(x)$, which has to be $p(x)$ by Lemma \ref{strong heir invariance}. 

	$(3)$ Suppose $(3)$ fails for some $i \ne j$. As $E$ is the restriction to $p(\C)$ of the equivalence relation $E'$, we see that $\tp\left(\bigslant{[c'_i]_{E},b'_j}{\C}\right)= \tp\left(\bigslant{[c_j']_{E},b'_i}{\C}\right)$ implies $\tp\left(\bigslant{[c'_i]_{E'},b'_j}{\C}\right)= \tp\left(\bigslant{[c_j']_{E'},b'_i}{\C}\right)$. So $\tp\left(\bigslant{[c'_i]_{E'},b'_j}{\mathcal{M}}\right)= \tp\left(\bigslant{[c_j']_{E'},b'_i}{\mathcal{M}}\right)$,
and hence $\tp\left(\bigslant{[c_i]_{E'},b_j}{\mathcal{M}}\right)= \tp\left(\bigslant{[c_j]_{E'},b_i}{\mathcal{M}}\right)$ because $q \subseteq q'$ and $\pi(x,y)$ is over $\mathcal{M}$. This is a contradiction.
	
	By (1), (2), and (3),  $p(\C')/E$ is unstable.
\end{proof}

Even though at first glance the requirement that $\pi(x,y)$ relatively defines an equivalence relation on $p\!\upharpoonright_{\mathcal{M}}\!\!(\mathfrak{C}')$ might not seem very natural, the following result shows that this can always be assumed.


\begin{prop}\label{decomposition of equivalence relations}
	Let $E$ be a $B$-relatively type-definable equivalence relation on $p(\C')$, for some $B \subseteq \C$. 
Then, 
$E=\bigcap_{i\in I}E_i \cap p(\C')^2$, where $|I| \leq |B| +|x| + |T|$, and for
each $i\in I$ there is a countable $B_i\subseteq \C$ such that $E_i$ is a countably $B_i$-relatively defined equivalence relation on $p\!\upharpoonright_{B_i}\!\!(\C')$. 
Thus, $E$ is the restriction to $p(\C')$ of a $B'$-type-definable equivalence relation $F$ on $p\!\upharpoonright_{B'}\!\!(\C')$ for some $B'\subseteq \C$ with $\lvert B'\rvert \leq |B| +|x| + |T|$.

Moreover, if we start from a given partial type $\pi(x,y)$ over $B$ relatively defining $E$, 
then $B'$ and $F$ in the previous sentence can be taken so that $|B'|\leq |\pi|$ and $F$ is $B'$-type-definable on $p\!\upharpoonright_{B'}\!\!(\C')$ by $\pi(x,y)$.
\end{prop}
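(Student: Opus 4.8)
The plan is to unwind, with the help of compactness, what it means for a partial type to relatively define an equivalence relation on $p(\C')$, and then to record that witnessing this requires only a small set of parameters from $\C$.

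Fix a partial type $\pi(x,y)$ over $B$ with $\pi(\C',\C')\cap p(\C')^2=E$; for the first assertion we may assume $|\pi|\le|B|+|x|+|T|$, since there are at most that many $L(B)$-formulas in finitely many variables from $x$ and $y$. Enumerate $\pi=\{\varphi_j(x,y):j\in J\}$. The key observation is that, although $\pi$ need not relatively define an equivalence relation on the larger set $p\!\upharpoonright_B\!\!(\C')$, the equivalence-relation implications hold once each free variable is forced to realize the \emph{full} type $p\in S(\C)$. Precisely, for every $\varphi\in\pi$ one has: (i) $\varphi(x,x)\in p$; (ii) $p(x)\cup p(y)\cup\pi(x,y)\vdash\varphi(y,x)$; and (iii) $p(x)\cup p(y)\cup p(z)\cup\pi(x,y)\cup\pi(y,z)\vdash\varphi(x,z)$, where $y,z$ are fresh copies of the tuple $x$. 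Here (i) holds because $(a,a)\in E\subseteq\varphi(\C',\C')$ for $a\models p$; and (ii)--(iii) hold because any tuple realizing the left-hand side in $\C'$ consists of realizations of $p$, hence of pairs lying in $E=\pi(\C',\C')\cap p(\C')^2$, so symmetry, resp.\ transitivity, of $E$ applies. Since each of (ii) and (iii) asserts the inconsistency of a partial type over $\C$, compactness provides, for every $\varphi\in\pi$, finite $\Delta_\varphi\subseteq p$ and $\Pi_\varphi\subseteq\pi$ (common to the two implications, by taking unions) with $\Delta_\varphi(x)\cup\Delta_\varphi(y)\cup\Pi_\varphi(x,y)\vdash\varphi(y,x)$ and $\Delta_\varphi(x)\cup\Delta_\varphi(y)\cup\Delta_\varphi(z)\cup\Pi_\varphi(x,y)\cup\Pi_\varphi(y,z)\vdash\varphi(x,z)$; let $D_\varphi\subseteq\C$ be the finite set of parameters occurring in $\Delta_\varphi$ or in $\varphi$.

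For the first assertion put $I:=J$; fix $j\in J$ and, by a routine $\omega$-step closure, choose a countable $\pi_j\subseteq\pi$ with $\varphi_j\in\pi_j$ and $\Pi_\varphi\subseteq\pi_j$ for all $\varphi\in\pi_j$, and let $B_j:=\bigcup_{\varphi\in\pi_j}D_\varphi\subseteq\C$, a countable set over which $\pi_j$ is a partial type. Set $E_j:=\pi_j(\C',\C')\cap p\!\upharpoonright_{B_j}\!\!(\C')^2$. Using that $p(\C')\subseteq p\!\upharpoonright_{B_j}\!\!(\C')$ and that every formula of $\Delta_\varphi$ lies in $p\!\upharpoonright_{B_j}$ (being in $p$ with parameters in $D_\varphi\subseteq B_j$), properties (i)--(iii) show that $E_j$ is an equivalence relation on $p\!\upharpoonright_{B_j}\!\!(\C')$: for instance, if $a,b,c\models p\!\upharpoonright_{B_j}$ satisfy $\pi_j(a,b)\wedge\pi_j(b,c)$, then for each $\varphi\in\pi_j$ the triple $(a,b,c)$ realizes $\Delta_\varphi(x)\cup\Delta_\varphi(y)\cup\Delta_\varphi(z)\cup\Pi_\varphi(x,y)\cup\Pi_\varphi(y,z)$, whence $\models\varphi(a,c)$; reflexivity uses (i) and symmetry is analogous. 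So each $E_i$ is a countably $B_i$-relatively defined equivalence relation on $p\!\upharpoonright_{B_i}\!\!(\C')$. Finally $\bigcup_j\pi_j=\pi$ because $\varphi_j\in\pi_j\subseteq\pi$ for all $j$, and so, using $p(\C')\subseteq p\!\upharpoonright_{B_j}\!\!(\C')$ once more,
$$\bigcap_{i\in I}E_i\cap p(\C')^2=\bigcap_{j\in J}\bigl(\pi_j(\C',\C')\cap p(\C')^2\bigr)=\pi(\C',\C')\cap p(\C')^2=E,$$
with $|I|=|J|\le|B|+|x|+|T|$.

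For the remaining assertions I argue directly. Put $B':=B\cup\bigcup_{\varphi\in\pi}D_\varphi\subseteq\C$; then $|B'|\le|B|+|x|+|T|$, and if we start from a prescribed $\pi$ then $|B'|\le|\pi|+\aleph_0$, hence $|B'|\le|\pi|$ when $\pi$ is infinite. Let $F:=\pi(\C',\C')\cap p\!\upharpoonright_{B'}\!\!(\C')^2$. Exactly as above, now with $\Delta_\varphi\subseteq p\!\upharpoonright_{B'}$, $\Pi_\varphi\subseteq\pi$ and $D_\varphi\subseteq B'$, properties (i)--(iii) show that $F$ is an equivalence relation on $p\!\upharpoonright_{B'}\!\!(\C')$, visibly $B'$-type-definable there by $\pi(x,y)$, and $F\cap p(\C')^2=\pi(\C',\C')\cap p(\C')^2=E$ since $p(\C')\subseteq p\!\upharpoonright_{B'}\!\!(\C')$. (Alternatively one may take $B':=\bigcup_{i\in I}B_i$ and $F:=\bigcap_i\bigl(E_i\cap p\!\upharpoonright_{B'}\!\!(\C')^2\bigr)$, an intersection of equivalence relations on $p\!\upharpoonright_{B'}\!\!(\C')$.) The one genuinely delicate point is the verification of (i)--(iii): it is there that one exploits that constraining the free variables to realize $p$ turns $\pi$ into an equivalence relation, and that these constraints are inconsistencies of $\C$-parametrized partial types --- exactly what makes compactness applicable --- while the closure construction and the cardinal bookkeeping are routine.
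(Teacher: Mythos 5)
Your proposal is correct and takes essentially the same route as the paper: both apply compactness to the implications $p(x)\cup p(y)\cup p(z)\cup\pi(x,y)\cup\pi(y,z)\vdash\varphi(x,z)$ (and the symmetric analogue) and then close off in $\omega$ steps to extract, for each formula of $\pi$, a countable sub-type over a countable parameter set relatively defining an equivalence relation on the corresponding restriction of $p$. The only cosmetic difference is that the paper organizes the closure as a descending chain $\varphi_0,\varphi_1,\dots$ (using closure of $\pi$ under conjunction) whereas you close finite witness sets $\Pi_\varphi$, and you spell out the ``Thus''/``Moreover'' clauses explicitly where the paper leaves them implicit.
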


\begin{proof}
Fix a partial type $\pi(x,y)$ relatively defining $E$ on $p(\C')$. It clearly consists of reflexive formulas and without loss of generality it is closed under conjunction. Let $\psi_0(x)$ be any formula in $p(x)$ and $\varphi_0(x,y)$ any formula in $\pi(x,y)$. 
Then the partial type $$p(x)\wedge p(y)\wedge p(z)\wedge \pi(x,y)\wedge \pi(y,z)$$ implies 
$\varphi_0(x,z)\wedge \varphi_0(z,x)$. By compactness, there are $\varphi_1(x,y)$ in $\pi(x,y)$ and $\psi_1(x)$ in $p(x)$ such that the formula $$\psi_1(x)\wedge\psi_1(y)\wedge \psi_1(z) \wedge \varphi_1(x,y)\wedge \varphi_1(y,z)$$ implies $\varphi_0(x,z) \wedge \varphi_0(z,x).$ Proceeding by induction, we construct a partial type 
	$$\{\varphi_i(x,y): i<\omega\}$$ relatively defining an equivalence relation on $\bigcap_{i<\omega}\psi_i(\C')$. 
Let $B_{\varphi_0,\psi_0}$ be a countable set containing the parameters of all the constructed formulas $\varphi_i(x,y)$ and $\psi_i(x)$, $i < \omega$.
Then, the partial type $\{\varphi_i(x,y): i<\omega\}$ clearly relatively defines over $B_{\varphi_0,\psi_0}$ an equivalence relation on $p\!\upharpoonright_{ B_{\varphi_0,\psi_0}}\!\!(\C')$. 
Applying this process separately to every $\varphi(x,y) \in \pi(x,y)$ yields the desired family of equivalence relations.	
\end{proof}

\begin{cor}\label{existence of a small model set of parameters}
Let $E$ and $\pi(x,y)$ be as in Proposition \ref{decomposition of equivalence relations}, where $B$ is $\C$-small. Then there is $\mathcal{M}\prec \C$ containing $B$ with $\lvert \mathcal{M} \rvert \leq 2^{\lvert T \rvert + \lvert A \rvert}+ \lvert B \rvert + \lvert \pi \rvert$ which is $\aleph_0$-saturated in $L_A$ and such that $\pi(x,y)$ relatively defines an equivalence relation on $p\!\upharpoonright_{\mathcal{M}}\!\!(\C')$.
\end{cor}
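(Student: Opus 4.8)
The plan is to feed the given partial type $\pi(x,y)$ into the ``Moreover'' part of Proposition~\ref{decomposition of equivalence relations} and then enlarge its (small) parameter set to a small $\aleph_0$-saturated model by the standard elementary chain construction carried out in the language $L_A$. Concretely, I first apply the last sentence of Proposition~\ref{decomposition of equivalence relations} to $\pi(x,y)$: this produces a set $B'\subseteq\C$ with $|B'|\leq|\pi|$ such that $\pi(x,y)$ already relatively defines an equivalence relation $F$ on $p\!\upharpoonright_{B'}(\C')$, i.e.\ $\pi(\C',\C')\cap p\!\upharpoonright_{B'}(\C')^2$ is an equivalence relation on $p\!\upharpoonright_{B'}(\C')$. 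Set $C:=A\cup B\cup B'$; since $A$, $B$, $B'$ are all $\C$-small, so is $C$, and $|C|\leq|A|+|B|+|\pi|\leq 2^{|T|+|A|}+|B|+|\pi|$.

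Next I build $\mathcal{M}$ by an $\omega$-length elementary chain in $L_A$. Because $A\subseteq\C$, an $L_A$-elementary submodel of $\C$ is the same thing as an $L$-elementary submodel of $\C$ containing $A$, and $|L_A|\leq|T|+|A|$, so there are at most $2^{|T|+|A|}$ complete $L_A$-types over $\emptyset$ in any fixed finite tuple of variables, hence at most $|N|\cdot 2^{|T|+|A|}$ complete $L_A$-types over finite subsets of any infinite $N\subseteq\C$. Start with $\mathcal{M}_0\prec\C$ with $C\subseteq\mathcal{M}_0$ and $|\mathcal{M}_0|\leq|C|+|T|+|A|$ (downward L\"owenheim--Skolem in $L_A$); given $\mathcal{M}_n$, realize in $\C$ all complete $L_A$-types over finite subsets of $\mathcal{M}_n$ (each such parameter set is finite, and the number of these types is $<\kappa$ by the saturation hypothesis of Section~\ref{section: basics}, so this is possible by $\kappa$-saturation of $\C$), and let $\mathcal{M}_{n+1}\prec\C$ contain $\mathcal{M}_n$ together with these realizations, of size $\leq|\mathcal{M}_n|\cdot 2^{|T|+|A|}+|T|+|A|$. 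An easy induction gives $|\mathcal{M}_n|\leq\max(|C|,2^{|T|+|A|})$ for all $n$. Then $\mathcal{M}:=\bigcup_{n<\omega}\mathcal{M}_n\prec\C$ by Tarski--Vaught, $\mathcal{M}$ is $\aleph_0$-saturated in $L_A$ (any $L_A$-type over a finite subset of $\mathcal{M}$ lies over a finite subset of some $\mathcal{M}_n$, hence is realized in $\mathcal{M}_{n+1}\subseteq\mathcal{M}$), $\mathcal{M}\supseteq C\supseteq B\cup B'$, and $|\mathcal{M}|\leq\aleph_0\cdot\max(|C|,2^{|T|+|A|})=\max(|C|,2^{|T|+|A|})\leq 2^{|T|+|A|}+|B|+|\pi|$.

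It remains to check the displayed property of $\mathcal{M}$. Since $B'\subseteq\mathcal{M}$, every realization in $\C'$ of $p\!\upharpoonright_{\mathcal{M}}$ realizes $p\!\upharpoonright_{B'}$, so $p\!\upharpoonright_{\mathcal{M}}(\C')\subseteq p\!\upharpoonright_{B'}(\C')$. Restricting the equivalence relation $\pi(\C',\C')\cap p\!\upharpoonright_{B'}(\C')^2$ to the subset $p\!\upharpoonright_{\mathcal{M}}(\C')$ of its domain yields exactly $\pi(\C',\C')\cap p\!\upharpoonright_{\mathcal{M}}(\C')^2$, and the restriction of an equivalence relation to a subset of its domain is again an equivalence relation; hence $\pi(x,y)$ relatively defines an equivalence relation on $p\!\upharpoonright_{\mathcal{M}}(\C')$, as required. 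The only point that needs genuine care is the cardinality bookkeeping in the middle step --- in particular verifying that each $\mathcal{M}_n$ stays $\C$-small, so that the types listed at stage $n$ really are realized in $\C$, and that the union has size at most $2^{|T|+|A|}+|B|+|\pi|$ rather than something slightly larger --- but this is routine given that $\kappa>2^{|T|+|A|}$, which is precisely the saturation assumption in force throughout Section~\ref{section: basics}.
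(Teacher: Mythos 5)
Your proof is correct and follows exactly the route the paper intends for this (unproved) corollary: invoke the ``Moreover'' clause of Proposition \ref{decomposition of equivalence relations} to get $B'$ with $\lvert B'\rvert\leq\lvert\pi\rvert$ on which $\pi$ already relatively defines an equivalence relation, then close $A\cup B\cup B'$ off to an $\aleph_0$-saturated $L_A$-elementary submodel of $\C$ by a standard $\omega$-chain, using that there are at most $2^{\lvert T\rvert+\lvert A\rvert}<\kappa$ types over finite sets in $L_A$, and finally observe that restricting an equivalence relation to the smaller domain $p\!\upharpoonright_{\mathcal{M}}(\C')\subseteq p\!\upharpoonright_{B'}(\C')$ preserves being an equivalence relation. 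The cardinality bookkeeping is also right, so there is nothing to add.
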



The following result is a criterion for when an equivalence relation on $p(\C')$ relatively type-definable over a sufficiently saturated $\C$-small model is the finest relatively type-definable equivalence relation over a $\C$-small $B\subseteq \C$ on $p(\C')$ with stable quotient.

\begin{lema}\label{equivalence for being the finest}
	Let $\mathcal{M}$ and $\pi(x,y)$ be as in Lemma \ref{reduction to a small model}, and assume that $\mathcal{M}$ is $\mathfrak{C}$-small. Then $\pi$ relatively defines the finest relatively type-definable over a $\C$-small subset of $\C$ equivalence relation on $p(\C')$ with stable quotient if and only if it relatively defines the finest $\mathcal{M}'$-type-definable equivalence relation on $p\!\upharpoonright_{\mathcal{M}'}\!\!(\mathfrak{C}')$ with stable quotient for every $\mathcal{M}'\prec \C'$ with $\lvert \mathcal{M}'\rvert \leq 2^{\lvert T \rvert + \lvert A \rvert}+ \lvert \mathcal{M} \rvert $ that is $\aleph_0$-saturated in $L_A$ and contains $\mathcal{M}$.
\end{lema}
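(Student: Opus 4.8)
The plan is to prove both implications by combining Lemma \ref{reduction to a small model} (which transfers stability of the quotient between $p(\C')$ and $p\!\upharpoonright_{\mathcal{M}'}\!\!(\C')$ for $\aleph_0$-saturated $\mathcal{M}'$ in $L_A$ containing $\mathcal{M}$) with Corollary \ref{existence of a small model set of parameters} and Proposition \ref{decomposition of equivalence relations} (which let us realize an arbitrary relatively type-definable equivalence relation, together with its defining partial type, over a $\C$-small $\aleph_0$-saturated model in $L_A$ of controlled size). Throughout I write $E$ for $\pi(\C',\C')\cap p(\C')^2$ and, for $\mathcal{M}'$ as in the statement, $E_{\mathcal{M}'}$ for $\pi(\C',\C')\cap p\!\upharpoonright_{\mathcal{M}'}\!\!(\C')^2$; by Lemma \ref{reduction to a small model}, $p(\C')/E$ is stable iff $p\!\upharpoonright_{\mathcal{M}'}\!\!(\C')/E_{\mathcal{M}'}$ is stable for one (equivalently every) such $\mathcal{M}'$, so the stability hypotheses on the two sides match up automatically and only the \emph{finest} part requires work.

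For the ``only if'' direction, assume $\pi$ relatively defines the finest relatively type-definable equivalence relation over a $\C$-small set on $p(\C')$ with stable quotient, fix an $\mathcal{M}'$ as in the statement, and suppose toward a contradiction that some $\mathcal{M}'$-type-definable equivalence relation $F$ on $p\!\upharpoonright_{\mathcal{M}'}\!\!(\C')$ with stable quotient is strictly contained in $E_{\mathcal{M}'}$. Let $\rho(x,y)$ be a partial type over $\mathcal{M}'$ defining $F$ on $p\!\upharpoonright_{\mathcal{M}'}\!\!(\C')$; then $\rho\wedge\pi$ also defines $F$, so without loss of generality $\rho$ extends $\pi$. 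Now $\rho$ relatively defines an equivalence relation on $p\!\upharpoonright_{\mathcal{M}'}\!\!(\C')$, and since $\mathcal{M}'$ is $\aleph_0$-saturated in $L_A$ and $\C$-small, Lemma \ref{reduction to a small model} applies to $\rho$ as well: $\rho(\C',\C')\cap p(\C')^2$ is an equivalence relation on $p(\C')$ whose quotient is stable (because $p\!\upharpoonright_{\mathcal{M}'}\!\!(\C')/F$ is). But $\rho(\C',\C')\cap p(\C')^2 \subseteq \pi(\C',\C')\cap p(\C')^2 = E$, and this containment is strict: a pair $(a,b)\in \big(E_{\mathcal{M}'}\setminus F\big)$ with $a,b\models p\!\upharpoonright_{\mathcal{M}'}$ can be chosen, and by replacing it with a realization over $\C$ of a strong heir extension over $A$ of $\tp(a,b/\mathcal{M}')$ (as in the proof of Lemma \ref{reduction to a small model}, using Lemma \ref{strong heir invariance} to ensure $a,b\models p$) we get a pair in $p(\C')$ lying in $E$ but not in $\rho(\C',\C')$. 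This contradicts minimality of $E$.

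For the ``if'' direction, assume $\pi$ relatively defines the finest $\mathcal{M}'$-type-definable equivalence relation on $p\!\upharpoonright_{\mathcal{M}'}\!\!(\C')$ with stable quotient for every admissible $\mathcal{M}'$; first note $p(\C')/E$ is stable by Lemma \ref{reduction to a small model}. Let $E'$ be any relatively type-definable equivalence relation on $p(\C')$, over a $\C$-small $B\subseteq\C$ and with partial type $\sigma(x,y)$ over $B$, such that $p(\C')/E'$ is stable; we must show $E\subseteq E'$. By Corollary \ref{existence of a small model set of parameters} applied to $\pi\wedge\sigma$ (a partial type over a $\C$-small set), there is a $\C$-small $\aleph_0$-saturated-in-$L_A$ model $\mathcal{M}_0\prec\C$ containing $\mathcal{M}$ and the parameters of $\sigma$, of size $\leq 2^{|T|+|A|}+|\mathcal{M}|+|\sigma|$, relative to which both $\pi$ and $\sigma$ relatively define equivalence relations on $p\!\upharpoonright_{\mathcal{M}_0}\!\!(\C')$; enlarging $\mathcal{M}_0$ inside $\C$ if necessary (its size stays bounded, and since $\C$ is $\C'$-small we may also take it inside $\C'$), we arrange $|\mathcal{M}_0|\leq 2^{|T|+|A|}+|\mathcal{M}|$ so that $\mathcal{M}_0$ is an admissible $\mathcal{M}'$. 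By Lemma \ref{reduction to a small model} the equivalence relation $F':=\sigma(\C',\C')\cap p\!\upharpoonright_{\mathcal{M}_0}\!\!(\C')^2$ on $p\!\upharpoonright_{\mathcal{M}_0}\!\!(\C')$ has stable quotient, so by the hypothesis applied to $\mathcal{M}_0$ we get $E_{\mathcal{M}_0}\subseteq F'$, and intersecting with $p(\C')^2$ yields $E\subseteq E'$, as desired.

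The main obstacle, and the place where the size bookkeeping has to be done carefully, is the bound on the parameter set in the ``if'' direction: one must choose $\mathcal{M}_0$ large enough to be $\aleph_0$-saturated in $L_A$ and to witness that both $\pi$ and the externally given $\sigma$ relatively define equivalence relations on $p\!\upharpoonright_{\mathcal{M}_0}\!\!(\C')$, yet small enough to satisfy $|\mathcal{M}_0|\leq 2^{|T|+|A|}+|\mathcal{M}|$ so that the hypothesis is applicable — this forces the reduction, via Proposition \ref{decomposition of equivalence relations}, of the arbitrary relatively type-definable $E'$ to one that is genuinely type-definable over a small model before Corollary \ref{existence of a small model set of parameters} can be invoked. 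A secondary subtlety is ensuring in the ``only if'' direction that passing from $\tp(a,b/\mathcal{M}')$ to a strong heir extension over $A$ keeps $a$ and $b$ realizing $p$ (so that the new pair genuinely lives in $p(\C')^2$), which is exactly where Lemma \ref{strong heir invariance} is used.
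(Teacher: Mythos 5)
Your overall strategy is the paper's: both directions hinge on Lemma \ref{reduction to a small model} plus the strong-heir trick of Lemma \ref{strong heir invariance}, and your ``only if'' direction is essentially identical to the paper's argument (take $(a,b)\in E_{\mathcal{M}'}\setminus F$, pass to a realization of a strong heir extension over $A$ of $\tp(a,b/\mathcal{M}')$ to produce a witness inside $p(\C')^2$, contradict minimality). One small point there: Lemma \ref{reduction to a small model} is stated for $\mathcal{M}'\prec\C$, whereas the statement allows $\mathcal{M}'\prec\C'$; the paper first uses the saturation of $\C$ to move $\mathcal{M}'$ into $\C$ by an automorphism fixing $\mathcal{M}$ (the size bound on $\mathcal{M}'$ is what makes this possible), and you should say this explicitly since the freedom to take $\mathcal{M}'\prec\C'$ is exactly what makes the right-hand side independent of $\C$ in Corollary \ref{corollary: from C to C_1}.

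The ``if'' direction, however, has a genuine gap as written. You apply Corollary \ref{existence of a small model set of parameters} to $\pi\wedge\sigma$, where $\sigma$ is the full defining type of an arbitrary competitor $E'$ over a $\C$-small $B$; the resulting $\mathcal{M}_0$ must contain $B$, and $|B|$ can be much larger than $2^{|T|+|A|}+|\mathcal{M}|$ while still being $\C$-small. The phrase ``enlarging $\mathcal{M}_0$ \dots we arrange $|\mathcal{M}_0|\leq 2^{|T|+|A|}+|\mathcal{M}|$'' cannot be carried out: no admissible $\mathcal{M}'$ contains $B$, so the hypothesis is simply not applicable to $\mathcal{M}_0$. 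You correctly diagnose in your closing paragraph that Proposition \ref{decomposition of equivalence relations} is what is needed, but the proof body never executes it. The paper's argument does: it writes $E'=\bigcap_i E_i\cap p(\C')^2$ with each $E_i$ countably relatively defined over a countable $B_i$, observes that failure of $E\subseteq E'$ is already witnessed by a single component ($E\cap E_i\subsetneq E$ for some $i$, and $E\cap E_i$ still has stable quotient since it contains $E'$... rather, since it is coarser than $E'$), and then only that one countable $B_i$ needs to be absorbed into an admissible $\mathcal{M}'\supseteq\mathcal{M}\cup B_i$ of size at most $2^{|T|+|A|}+|\mathcal{M}|$. Equivalently, in your direct formulation, you should prove $E\subseteq E_i\cap p(\C')^2$ separately for each countable component and intersect at the end. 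This per-component reduction is the essential idea that makes the uniform cardinality bound on $\mathcal{M}'$ achievable, and it must appear in the proof, not only in the commentary.
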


\begin{proof}
Let $E$ be the equivalence relation relatively defined by $\pi$ on $p(\C')$ and $E'$ be the equivalence relation relatively defined by $\pi$ on $p\!\upharpoonright_{\mathcal{M}}\!\!(\mathfrak{C}')$.

($\Leftarrow$) By Lemma \ref{reduction to a small model}, the right hand side implies that $E$ has stable quotient. Assume that there exists $E_B$, a relatively type-definable equivalence relation on $p(\C')$ over some $\C$-small set of parameters $B\subseteq \C$ such that the quotient $p(\C')/E_B$ is stable and $E_B\subsetneq E$. 
Take a presentation of $E_B$ as $\bigcap_{i\in I}E_i \cap p(\C')^2$ satisfying the conclusion of Proposition \ref{decomposition of equivalence relations}. Abusing notation, write $E_i$ for $E_i \cap p(\C')^2$. As $E_B\subsetneq E$, there exists some $i\in I$ such that  $$E\cap E_i\subsetneq E.$$ 
Since $p(\C')/E_B$ is stable and $E_B\subseteq E\cap E_i$, we have that $p(\C')/E\cap E_i$ is stable. 
Pick $B_i$ as in  Proposition \ref{decomposition of equivalence relations} and choose any $\mathcal{M}'\supseteq \mathcal{M} \cup B_i$
$\aleph_0$-saturated in $L_A$, contained in $\C$ and of size at most 
$2^{|T| + \lvert A \rvert}+ \lvert \mathcal{M}\rvert$. 
By the choice of $B_i$ and $E_i$, there is  a partial type $\delta(x,y)$ over $\mathcal{M}'$ relatively defining $E_i$ which also relatively defines an equivalence relation on $p \!\upharpoonright_{\mathcal{M}'}\!\!(\C')$. Let $\rho(x,y)$ be $\pi(x,y) \wedge \delta(x,y)$. Then $\rho(x,y)$ relatively defines an equivalence relation on $p\!\upharpoonright_{\mathcal{M}'}\!\!(\C')$ and $\bigslant{p(\C')}{ \rho(\C',\C') \cap p(\C')^2}$ is stable. Hence, applying Lemma \ref{reduction to a small model}, we obtain that the quotient  $$\bigslant{p\!\upharpoonright_{\mathcal{M}'}\!\!(\C')}{ \rho(\C',\C') \cap p\!\upharpoonright_{\mathcal{M}'}\!\!(\C')^2}.$$ is stable. Moreover, 
$$\rho(\C',\C')\cap p\!\upharpoonright_{\mathcal{M}'}\!\!(\C')^2\subsetneq\pi(\C',\C')\cap p\!\upharpoonright_{\mathcal{M}'}\!\!(\C')^2.$$
Thus, we have proved that the right hand side of the lemma fails.


($\Rightarrow$) 
By Lemma \ref{reduction to a small model}, the left hand side implies that $E'$ is stable. 
Assume that the right hand side does not hold, witnessed by a model $\mathcal{M}'$ of size at most $ 2^{\lvert T \rvert + \lvert A \rvert}+ \lvert \mathcal{M} \rvert $ that is $\aleph_0$-saturated in $L_A$ and contains $\mathcal{M}$   
 and a partial type $\rho(x,y)$ over $\mathcal{M}'$. 
 By saturation of $\C$, we can assume that $\mathcal{M}'\subseteq \C$. 
Hence,  by Lemma \ref{reduction to a small model}, the fact that the quotient $\bigslant{p\!\upharpoonright_{\mathcal{M}'}\!\!(\mathfrak{C}')}{\rho(\mathfrak{C}',\mathfrak{C}')\cap p\!\upharpoonright_{\mathcal{M}'}\!\!(\mathfrak{C}')^2}$ is stable implies that the quotient  $\bigslant{p(\mathfrak{C}')}{\rho(\mathfrak{C}',\mathfrak{C}')\cap p(\mathfrak{C}')^2}$ is stable. Let $b_1,b_2\in p\!\upharpoonright_{\mathcal{M}'}\!\!(\C')$ be elements witnessing $$\rho(\mathfrak{C}',\mathfrak{C}')\cap p\!\upharpoonright_{\mathcal{M}'}\!\!(\mathfrak{C}')^2 \subsetneq \pi(\mathfrak{C}',\mathfrak{C}')\cap p\!\upharpoonright_{\mathcal{M}'}\!\!(\mathfrak{C}')^2,$$
that is, $(b_1,b_2) \in \pi(\mathfrak{C}',\mathfrak{C}') \setminus \rho(\mathfrak{C}',\mathfrak{C}')$.
Let $q:=\tp\left(\bigslant{b_1,b_2}{\mathcal{M}'}\right)$ and let $q'\in S(\C)$ be a strong heir extension  over $A$ of $q$. 
By Lemma \ref{strong heir invariance}, any realization $(b_1',b_2')\in q'(\C')$ satisfies $b_1',b_2'\in p(\C')$, $(b_1',b_2')\in\pi(\mathfrak{C}',\mathfrak{C}')$, and $(b_1',b_2')\not \in\rho(\mathfrak{C}',\mathfrak{C}')$.
Therefore,
 $$\rho(\mathfrak{C}',\mathfrak{C}')\cap p(\mathfrak{C}')^2 \subsetneq \pi(\mathfrak{C}',\mathfrak{C}')\cap p(\mathfrak{C}')^2,$$
 which contradicts the minimality of $E$.
\end{proof}

Let $\C\prec \C_1\prec \C'$ be such that $\C_1$ is $\C'$-small and $\kappa_1$-saturated 
with $\kappa_1\geq \kappa$. A set is {\em $\C_1$-small} if its cardinality is smaller than $\kappa_1$. Let $p_1(x)\in S(\C_1)$ be the unique $A$-invariant extension of $p(x)$.

\begin{cor}\label{corollary: from C to C_1}
Assume that $E$ is  the finest relatively type-definable over a $\C$-small subset of $\C$ equivalence relation on $p(\C')$ with stable quotient. Then $E\cap p_1(\C')^2$ is the finest relatively type-definable over a $\C_1$-small subset of $\C_1$ equivalence relation on $p_1(\C')$ with stable quotient.
\end{cor}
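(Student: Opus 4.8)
The plan is to transfer finestness between $\C$ and $\C_1$ by applying Lemma~\ref{equivalence for being the finest} twice, once over $\C$ and once over $\C_1$. First I would fix a partial type $\pi(x,y)$ over a $\C$-small set of parameters relatively defining $E$ on $p(\C')$, and use Corollary~\ref{existence of a small model set of parameters} to obtain a $\C$-small model $\mathcal{M}\prec\C$, $\aleph_0$-saturated in $L_A$, such that $\pi(x,y)$ already relatively defines an equivalence relation on $p\!\upharpoonright_{\mathcal{M}}(\C')$; then $\mathcal{M}$ and $\pi$ are as in Lemma~\ref{reduction to a small model} and $E=\pi(\C',\C')\cap p(\C')^2$. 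Since $E$ is by hypothesis the finest relatively type-definable over a $\C$-small subset of $\C$ equivalence relation on $p(\C')$ with stable quotient, and $\pi$ relatively defines it, the left-hand side of Lemma~\ref{equivalence for being the finest} (read over $\C$) holds; hence so does its right-hand side, which I record as $(\ast)$: for every $\mathcal{M}'\prec\C'$ that is $\aleph_0$-saturated in $L_A$, contains $\mathcal{M}$, and has size at most $2^{\lvert T\rvert+\lvert A\rvert}+\lvert\mathcal{M}\rvert$, the type $\pi$ relatively defines the finest $\mathcal{M}'$-type-definable equivalence relation on $p\!\upharpoonright_{\mathcal{M}'}(\C')$ with stable quotient.

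The next step is the observation that $p_1\!\upharpoonright_B=p\!\upharpoonright_B$ for every $B\subseteq\C'$: the unique global $A$-invariant extension of $p$ restricts to $\C_1$ as an $A$-invariant extension of $p$, hence equals $p_1$, so it is also the unique global $A$-invariant extension of $p_1$. In particular $p_1\!\upharpoonright_{\mathcal{M}}=p\!\upharpoonright_{\mathcal{M}}$, so $\pi(x,y)$ relatively defines an equivalence relation on $p_1\!\upharpoonright_{\mathcal{M}}(\C')$; moreover $\mathcal{M}\prec\C_1$ is $\aleph_0$-saturated in $L_A$ and $\C_1$-small (being $\C$-small and $\kappa\le\kappa_1$). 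Thus the pair $\mathcal{M},\pi$ is again as in Lemma~\ref{reduction to a small model}, now with $\C_1,p_1$ in place of $\C,p$, with $\C'$ still serving as the ambient big monster (since $\lvert\C_1\rvert<\kappa'$) and $\C_1$ meeting the standing saturation requirement on the small monster (since $\kappa_1\ge\kappa$). Note also that $\pi(\C',\C')\cap p_1(\C')^2=E\cap p_1(\C')^2$, because $p_1(\C')\subseteq p(\C')$.

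Finally, for every eligible $\mathcal{M}'$ we have $p_1\!\upharpoonright_{\mathcal{M}'}=p\!\upharpoonright_{\mathcal{M}'}$, and both the class of eligible $\mathcal{M}'$ and the bound $2^{\lvert T\rvert+\lvert A\rvert}+\lvert\mathcal{M}\rvert$ depend only on $\mathcal{M},T,A,\C'$ and not on whether we work over $\C$ or over $\C_1$. Hence $(\ast)$ is verbatim the right-hand side of Lemma~\ref{equivalence for being the finest} applied over $\C_1$ with $p_1$; by the right-to-left direction of that lemma, $\pi$ relatively defines the finest relatively type-definable over a $\C_1$-small subset of $\C_1$ equivalence relation on $p_1(\C')$ with stable quotient, and that relation is $\pi(\C',\C')\cap p_1(\C')^2=E\cap p_1(\C')^2$, which is the assertion.

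The only genuinely delicate point is the bookkeeping in the middle step: checking that $\mathcal{M},\pi$ still satisfy the hypotheses of Lemmas~\ref{reduction to a small model} and~\ref{equivalence for being the finest} after $\C$ is replaced by $\C_1$, and that $\C_1$ may legitimately play the role of the small monster there. The identity $p_1\!\upharpoonright_B=p\!\upharpoonright_B$ is what makes the first of these work, and the rest is immediate from $\kappa\le\kappa_1$ and $\C_1$ being $\C'$-small.
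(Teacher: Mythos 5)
Your proposal is correct and follows essentially the same route as the paper: obtain $\mathcal{M}$ and $\pi$ via Corollary~\ref{existence of a small model set of parameters}, pass to the right-hand side of Lemma~\ref{equivalence for being the finest}, observe that this condition is independent of whether one works over $\C$ or $\C_1$, and apply the lemma again in the other direction. Your write-up merely makes explicit the bookkeeping (in particular $p_1\!\upharpoonright_B=p\!\upharpoonright_B$) that the paper's one-line remark ``this right hand side does not depend on the choice of $\C$'' leaves implicit.
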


\begin{proof}
Using Corollary 
\ref{existence of a small model set of parameters}, we can find a $\mathfrak{C}$-small  $\mathcal{M} \prec \C$ which is
$\aleph_0$-saturated in $L_A$ and a partial type $\pi(x,y)$ over $\mathcal{M}$ relatively defining $E$ and relatively defining an equivalence relation on $p\!\upharpoonright_{\mathcal{M}}\!\!(\C')$. By Lemma \ref{equivalence for being the finest}, the right hand side of the equivalence in Lemma \ref{equivalence for being the finest} holds. But this right hand side does not depend on the choice of $\C$, and so, again by Lemma \ref{equivalence for being the finest}, $E\cap p_1(\C')^2$ is the finest relatively type-definable over a $\C_1$-small subset of $\C_1$ equivalence relation on $p_1(\C')$ with stable quotient.
\end{proof}


However, there is no obvious transfer going in the opposite direction (i.e., from $\C_1$ to $\C$), as an application of 
 Corollary \ref{existence of a small model set of parameters} for $p_1$ may produce a model  $\mathcal{M} \prec \C_1$ whose cardinality is bigger than the degree of saturation of $\C$, and then we cannot embed it into $\C$ via an automorphism. We have only the following corollary.

\begin{cor}\label{corollary: from C_1 to C}
Assume that $E$ is the finest relatively type-definable over a $\C_1$-small subset of $\C_1$ equivalence relation on $p_1(\C')$ with stable quotient, and suppose that $E$ is relatively defined by a type $\pi(x,y,B)$ over a $\C$-small set $B$. 
Let $\sigma \in \aut(\C_1/A)$ be such that $\sigma[B] \subseteq \mathfrak{C}$. Then $\pi(\C',\C',\sigma[B]) \cap p(\C')^2$ is the finest relatively type-definable over a $\C$-small subset of $\C$ equivalence relation on $p(\C')$ with stable quotient.
\end{cor}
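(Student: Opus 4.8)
The plan is to transfer the finest relation from $\C_1$ down to $\C$ by conjugating with an extension of $\sigma$ to $\aut(\C'/A)$, crucially exploiting that $\sigma$ is an \emph{automorphism} of $\C_1$, so that $\sigma^{-1}$ maps $\C$ back \emph{into} $\C_1$.

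First I would extend $\sigma$ to some $\hat\sigma\in\aut(\C'/A)$, which is possible since $\C_1$ is $\C'$-small and $\C'$ is strongly $\kappa'$-homogeneous. Let $\bar p\in S(\C')$ be the global $A$-invariant extension of $p$. Being $A$-invariant, $\bar p$ is recovered from the scheme $\varphi(x,y)\mapsto\{\tp(b/A):\varphi(x,b)\in\bar p\}$, which is visibly fixed by every $\tau\in\aut(\C'/A)$; hence $\tau(\bar p)=\bar p$ for all such $\tau$. Since $\C\prec\C_1$ and $\C$ is $\kappa$-saturated, this is also the scheme of $p_1$, so $p_1$ and $p$ share the global $A$-invariant extension $\bar p$ and $p(\C')=p_1(\C')=\bar p(\C')$ as sets of tuples. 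In particular $\hat\sigma$ restricts to a bijection of $p(\C')$ onto itself. Writing $E=\pi(\C',\C',B)\cap p_1(\C')^2$ and using $B\subseteq\C_1$ (so $\hat\sigma[B]=\sigma[B]$), we obtain
$$E_\sigma:=\hat\sigma(E)=\pi(\C',\C',\sigma[B])\cap p(\C')^2.$$
As $\hat\sigma\in\aut(\C')$, it carries $p_1(\C')/E$ onto $p(\C')/E_\sigma$ preserving all types, so $p(\C')/E_\sigma$ is stable; and $E_\sigma$ is relatively type-definable over the $\C$-small set $\sigma[B]\subseteq\C$. Thus $E_\sigma$ lies in the family $\mathcal{C}$ of all equivalence relations on $p(\C')$ relatively type-definable over a $\C$-small subset of $\C$ and with stable quotient.

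It remains to show $E_\sigma$ is the finest member of $\mathcal{C}$. The family $\mathcal{C}$ is closed under pairwise intersections: the quotient by an intersection is, up to a type-definable bijection, a type-definable subset of the product of the two stable quotients, hence stable by \cite[Remark 1.4]{MR3796277}. So it suffices to check that no member of $\mathcal{C}$ is strictly contained in $E_\sigma$. Suppose $F\in\mathcal{C}$ with $F\subseteq E_\sigma$, relatively defined over a $\C$-small $B_0\subseteq\C$, and put $F':=\hat\sigma^{-1}(F)$. Because $\hat\sigma$ restricts to $\sigma\in\aut(\C_1/A)$ and $\C\subseteq\C_1$, we have $\hat\sigma^{-1}[B_0]=\sigma^{-1}[B_0]\subseteq\sigma^{-1}[\C]\subseteq\C_1$ with $|\hat\sigma^{-1}[B_0]|=|B_0|<\kappa\le\kappa_1$; hence $F'$ is an equivalence relation on $p_1(\C')$ relatively type-definable over a $\C_1$-small subset of $\C_1$, with stable quotient, and $F'\subseteq\hat\sigma^{-1}(E_\sigma)=E$. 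By the hypothesis that $E$ is the finest such relation on $p_1(\C')$, we get $E\subseteq F'$, so $F'=E$ and, applying $\hat\sigma$, $F=E_\sigma$. This gives minimality of $E_\sigma$ in $\mathcal{C}$, which together with closure under intersections shows $E_\sigma=\min\mathcal{C}$, as desired.

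The crux — and the only place the extra hypotheses (that $B$ is $\C$-small and that $\sigma$ exists) are actually used — is the step $\sigma^{-1}[\C]\subseteq\C_1$, valid precisely because $\sigma$ is a \emph{surjective} automorphism of $\C_1$ and not merely a partial elementary map; this is exactly what fails for a $\C$-small parameter set lying in $\C_1\setminus\C$, which is why no transfer from $\C_1$ to $\C$ is available in general. One should also keep track that $\hat\sigma$ and $\hat\sigma^{-1}$ preserve the set $p(\C')$, which reduces to the identity $\hat\sigma(\bar p)=\bar p$ noted above.
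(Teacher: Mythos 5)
There is a genuine gap, and it sits at the very first structural claim: you assert that $p(\C')=p_1(\C')=\bar p(\C')$ ``as sets of tuples.'' This is false in general. A tuple $a\in\C'$ lies in $p(\C')$ iff $\tp(a/\C)=p$, while it lies in $p_1(\C')$ iff $\tp(a/\C_1)=p_1$; every realization of $p_1$ realizes $p$, but a realization of $p$ may realize a non-invariant extension of $p$ to $\C_1$, so typically $p_1(\C')\subsetneq p(\C')$ (the paper itself uses the inclusion $p_1(\C')\subseteq p(\C')$, not equality, in the proof of Proposition 2.12; think of the type at $+\infty$ in a dense order, where a realization of $p$ can lie below elements of $\C_1\setminus\C$). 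Relatedly, $\hat\sigma$ does \emph{not} restrict to a bijection of $p(\C')$: since $\hat\sigma(\bar p)=\bar p$, one gets $\hat\sigma[p(\C')]=\bar p\!\upharpoonright_{\hat\sigma[\C]}(\C')$, and $\hat\sigma$ fixes $\C_1$ setwise but not $\C$. What your conjugation argument actually yields is $\hat\sigma(E)=\pi(\C',\C',\sigma[B])\cap p_1(\C')^2$, a relation on the \emph{smaller} set $p_1(\C')$, not the relation $\pi(\C',\C',\sigma[B])\cap p(\C')^2$ on $p(\C')$ that the corollary is about.

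This gap propagates through the rest of the argument. Even granting that $\hat\sigma(E)$ is the finest such relation on $p_1(\C')$ with parameters now inside $\C$, you still need: (i) that $\pi(x,y,\sigma[B])$ relatively defines an equivalence relation on the larger set $p(\C')$; (ii) that the quotient of $p(\C')$ (not just of $p_1(\C')$) is stable; and (iii) that minimality holds against competitors $F$ on $p(\C')$ — your pull-back $\hat\sigma^{-1}(F)$ restricted to $p_1(\C')$ only shows $F$ and the candidate agree on $p_1(\C')$, not on $p(\C')$. Passing between the small-model realization set and the full set $p(\C')$ is exactly the non-trivial content of Lemmas \ref{reduction to a small model} and \ref{equivalence for being the finest}, proved via strong heir extensions (Lemma \ref{strong heir invariance}); the paper's proof of this corollary goes precisely through that machinery (reduce $B$ to a small $\aleph_0$-saturated $\mathcal{M}\prec\C_1$ by Corollary \ref{existence of a small model set of parameters}, adjust $\sigma$ so that $\sigma[\mathcal{M}]\subseteq\C$, and invoke the model-independent criterion of Lemma \ref{equivalence for being the finest} in both directions). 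Your automorphism idea is a sensible ingredient — the paper also conjugates by $\sigma$ — but it cannot replace that transfer step.
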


\begin{proof}
By Corollary \ref{existence of a small model set of parameters} applied to $\C_1$ and $p_1$ in place of $\C$ and $p$, there is $\mathcal{M}\prec \C_1$ containing $B$ with $\lvert \mathcal{M} \rvert \leq 2^{\lvert T \rvert + \lvert A \rvert}+ \lvert B \rvert + \lvert x \rvert$ which is $\aleph_0$-saturated in $L_A$ and such that $\pi(x,y,B)$ relatively defines an equivalence relation on $p_1\!\upharpoonright_{\mathcal{M}}\!\!(\C')$. Since $\kappa >2^{\lvert T \rvert + \lvert A \rvert}+ \lvert B \rvert + \lvert x \rvert$, we can modify $\sigma $ outside $A \cup B$ so that $\sigma[\mathcal{M}] \subseteq \C$.

By assumption and Lemma \ref{equivalence for being the finest}, the right hand side of that lemma holds for $p_1$ in place of $p$. Since $\sigma(p_1)=p_1$, it still holds for $p_1$ and $\sigma[\mathcal{M}]$ in place of $\mathcal{M}$. 
Since this right hand side does not depend on $\C_1$ and we have $\sigma[\mathcal{M}] \subseteq \C$, it holds for $p$ and $\sigma[\mathcal{M}]$, so by Lemma \ref{equivalence for being the finest}, we get that  $\pi(\C',\C',\sigma[B]) \cap p(\C')^2$ is the finest relatively type-definable over a $\C$-small subset of $\C$ equivalence relation on $p(\C')$ with stable quotient.
\end{proof}

The following proposition and its proof was proposed by the referee.

\begin{prop}
		The finest relatively type-definable over a $\C$-small subset of $\C$ equivalence relation on $p(\C')$ with stable quotient exists if and only if the finest $\C$-type-definable equivalence relation on $p(\C')$ with stable quotient is relatively type-definable over a $\C$-small subset of $\C$, and, in that case, both equivalence relations coincide.
\end{prop}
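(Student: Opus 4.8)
Let $F$ denote the finest $\C$-type-definable equivalence relation on $p(\C')$ with stable quotient. First I should note that $F$ exists unconditionally: the collection of $\C$-type-definable equivalence relations on $p(\C')$ with stable quotient is closed under (small) intersections — the intersection is still $\C$-type-definable (over the union of the parameter sets, which is $\C$-small since $|\C|<\kappa'$ and there are at most $2^{|T|+|\C|}$ formulas, but in any case a subset of $\C$), it is still an equivalence relation, and its quotient embeds into the product of the individual quotients, hence is stable by \cite[Remark 1.4]{MR3796277}. So $F$ is simply the intersection of all of them, and it is the unique finest one among $\C$-type-definable equivalence relations with stable quotient.

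\textbf{($\Leftarrow$).} Suppose $F$ happens to be relatively type-definable over a $\C$-small $B\subseteq\C$; I want to show $F$ is the finest such relatively type-definable-over-a-$\C$-small-set relation with stable quotient, which in particular establishes existence. So let $E_B$ be relatively type-definable over a $\C$-small $B'\subseteq\C$ with stable quotient $p(\C')/E_B$; I must show $F\subseteq E_B$. By Proposition \ref{decomposition of equivalence relations}, $E_B$ is the restriction to $p(\C')$ of some $\C$-type-definable equivalence relation $E_B^{+}$ on $p\!\upharpoonright_{B''}\!\!(\C')$ for a $\C$-small $B''$; but then $E_B^{+}\cap p(\C')^2=E_B$ has stable quotient, and $E_B^{+}$ itself, viewed as an equivalence relation on all of $p(\C')$... wait — here one must be slightly careful: $E_B^{+}$ is defined on the larger set $p\!\upharpoonright_{B''}\!\!(\C')$, but intersecting its defining type with $p(\C')^2$ gives a $\C$-type-definable (indeed $B''$-relatively-type-definable, but $B''\subseteq\C$) equivalence relation on $p(\C')$ whose quotient is $E_B$, hence stable. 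Thus $E_B$ is itself $\C$-type-definable on $p(\C')$ with stable quotient, so $F\subseteq E_B$ by minimality of $F$. This shows $F$ is the finest relatively-type-definable-over-a-$\C$-small-subset equivalence relation with stable quotient, and the two relations literally coincide.

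\textbf{($\Rightarrow$).} Conversely, suppose $E$ is the finest relatively type-definable over a $\C$-small subset of $\C$ equivalence relation on $p(\C')$ with stable quotient. By definition $E$ is relatively type-definable over a $\C$-small subset of $\C$, so it only remains to show $E=F$. On the one hand, $F$ (which, as observed, exists and is $\C$-type-definable on $p(\C')$ with stable quotient, hence in particular relatively type-definable over a $\C$-small subset of $\C$ since $\C$ itself — or rather any $\C$-small set of its parameters — works) is one of the relations over which $E$ is taken to be finest, so $E\subseteq F$. On the other hand, reverting the argument of the ($\Leftarrow$) direction: $E$ is relatively type-definable over a $\C$-small $B\subseteq\C$; by Proposition \ref{decomposition of equivalence relations} and the remark above, $E$ is $\C$-type-definable on $p(\C')$ with stable quotient, hence $F\subseteq E$ by minimality of $F$. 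Therefore $E=F$, and in particular $F$ is relatively type-definable over a $\C$-small subset of $\C$.

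\textbf{Main obstacle.} The only subtle point — and the step I expect to need the most care — is the passage in both directions between ``$\C$-type-definable equivalence relation on the larger set $p\!\upharpoonright_{B''}\!\!(\C')$'' and ``$\C$-type-definable equivalence relation on $p(\C')$ itself'': one has to check that restricting the defining partial type to $p(\C')^2$ genuinely yields an equivalence relation on $p(\C')$ with the same (stable) quotient, and that its parameter set is a $\C$-small subset of $\C$. This is exactly what Proposition \ref{decomposition of equivalence relations} is designed to supply, so invoking it cleanly is the crux; everything else is formal bookkeeping about intersections and minimality.
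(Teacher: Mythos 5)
Your ($\Leftarrow$) direction and the inclusion $F\subseteq E$ in ($\Rightarrow$) are fine (and the detour through Proposition \ref{decomposition of equivalence relations} is unnecessary there: a partial type over a $\C$-small $B\subseteq\C$ is in particular a partial type over $\C$, so any relation relatively type-definable over a $\C$-small subset of $\C$ is automatically $\C$-type-definable and hence coarser than $F$). The genuine gap is in your proof of $E\subseteq F$. You justify it by saying that $F$ is ``relatively type-definable over a $\C$-small subset of $\C$ since $\C$ itself --- or rather any $\C$-small set of its parameters --- works,'' and hence is one of the relations over which $E$ is finest. But $\C$ is not $\C$-small (a set is $\C$-small when its cardinality is below the saturation degree $\kappa$ of $\C$, and $|\C|\geq\kappa$), and the partial type over $\C$ defining $F$ --- obtained as an intersection of boundedly many but possibly $|\C|$-many-parameter relations --- has no reason to involve only a $\C$-small parameter set. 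Whether $F$ is relatively type-definable over a $\C$-small set is exactly the conclusion of the ($\Rightarrow$) direction, so invoking it to get $E\subseteq F$ is circular. In particular, $F$ is \emph{not} a priori a member of the family over which $E$ is taken to be finest, and the minimality of $E$ gives you nothing about $F$ directly.

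The paper closes this gap with a non-formal argument: it passes to a larger monster $\C\prec\C_1\prec\C'$, uses Corollary \ref{corollary: from C to C_1} to see that the $\C$-small type $\pi(x,y)$ defining $E$ is still finest over $\C_1$-small sets for $p_1$, observes that $\rho(x,y)$ (defining $F$ over $\C$) now \emph{is} over a $\C_1$-small set, so minimality there yields $\pi(x,y)\cup p_1(x)\cup p_1(y)\models\rho(x,y)$; it then pulls this entailment back down to $p$ by compactness together with the $A$-invariance of $p_1$ (replacing the finitely many $\C_1$-parameters of the relevant formula of $p_1$ by parameters in $\C$ of the same type over $A$ and the parameters of the other formulas). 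Some argument of this kind --- using the invariance of $p$ --- is needed; the inclusion $E\subseteq F$ does not follow from bookkeeping alone.
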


\begin{proof}
	Let $F$ be the finest $\C$-type-definable equivalence relation on $p(\C')$ with stable quotient. 
	Every relatively type-definable over a $\C$-small subset of $\C$ equivalence relation on $p(\C')$ with stable quotient is coarser than $F$. Thus, if $F$ is is relatively type-definable over a $\C$-small subset of $\C$, then it is the finest one with stable quotient. Conversely, suppose that the finest relatively type-definable over a $\C$-small subset of $\C$ equivalence relation on $p(\C')$ with stable quotient exists and denote it by $E$. As we have already pointed out, we have $F\subseteq E$. On the other hand, let $\pi(x,y)$ be a partial type over a $\C$-small subset of $\C$ relatively defining $E$ and $\rho(x,y)$ a partial type over $\C$ defining $F$. 
	Pick $\C\prec \C_1\prec \C'$ such that $\C_1$ is $\kappa_1$-saturated 
with $\kappa_1>\lvert \C\rvert$. 
By Corollary \ref{corollary: from C to C_1}, $\pi(x,y)$ relatively defines the finest relatively type-definable over a $\C_1$-small subset of $\C_1$ equivalence relation on $p_1(\C')$ with stable quotient. 
Since $p_1(\C')\subseteq p(\C')$ and $\kappa_1>\lvert \C\rvert$, we have that $\rho(x,y)$ relatively defines, over a $\C_1$-small subset of $\C_1$, an equivalence relation on $p_1(\C')$ with stable quotient. Hence, $\pi(x,y)\cup p_1(x)\cup p_1(y)\models \rho(x,y)$. 
	Consider any formula $\phi(x,y,c_0)$ implied by $\rho(x,y)$, where $c_0 \in \C$. 
	By compactness, there is a formula $\psi(x,c_1) \in p_1(x)$ and a formula $\Delta(x,y,c_2)$ implied $\pi(x,y)$ with $c_2 \in \C$ and such that $$\Delta(x,y,c_2)\wedge \psi(x,c_1)\wedge\psi(y,c_1)\models \varphi(x,y,c_0).$$ Now, take $c\in\C$ such that $\tp(c,c_0,c_2/A)=\tp(c_1,c_0,c_2/A)$. Then,  $\Delta(x,y,c_2)\wedge \psi(x,c)\wedge\psi(y,c)\models \varphi(x,y,c_0).$ On the other hand, by $A$-invariance of $p_1(x)$, we get $\psi(x,c)\in p(x)=p_1\!\upharpoonright_{\C}\!\!(x)$. Therefore, $\pi(x,y)\cup p(x)\cup p(y)\models \varphi(x,y,c_0)$. As $\varphi$ was arbitrary, we get  $\pi(x,y)\cup p(x)\cup p(y)\models \rho(x,y)$, so $E\subseteq F$, concluding $E=F$.
\end{proof}

\section{The main theorem}\label{section: main theorem}

The goal of this section is to prove the theorem stated in the introduction (see Theorem \ref{Est exists}). 

We use results on relatively type-definable subsets of the group of automorphisms of $\C'$ extracted from \cite{hrushovski2021order}. 
The following is Definition 2.14 of \cite{hrushovski2021order}, which extends the notion of relatively definable subset of the automorphism group of the monster model from \cite[Appendix A]{KPR18}.

\begin{defin}\label{relatively-type-def}
 By a  {\em relatively type-definable} subset of $\auto(\C')$, we mean a subset of the form $\{ \sigma \in \aut(\C') : \C' \models \pi(\sigma(a), b))\}$ for some partial type $\pi(x, y)$ without parameters, 
 where $x$ and $y$ are $\C'$-small 
tuples of variables, and $a$, $b$ are corresponding tuples from $\C'$.
\end{defin}

In particular, given a partial type $\pi(x,y,z)$ over the empty set, a ($\C'$-small) set of parameters $A$ and ($\C'$-small) tuples $a,b,c$ in $\C'$ corresponding to $x,y,z$, respectively, we have a relatively type-definable subset of $\auto(\C')$ of the form $$A_{\pi(x;y,z);a;b,c}(\C'/A):=\{\sigma\in \auto(\C'/A): \C'\models \pi(\sigma(a);b,c)\}.$$
In this section, if $A=\emptyset$, we will omit $(\C'/A)$, and when it is clear 
how the variables are arranged, we will denote sets of the form $A_{\pi(x;y,z);a;a,c}(\C'/A)$ as $A_{\pi;a;c}(\C'/A)$.

We use relatively type-definable sets of the group $\auto(\C')$ to prove the following:

\begin{lema}\label{IP condition}
Let $a\in \mathfrak{C}'$ and a sequence $(a_i)_{i<\omega} \subseteq \mathfrak{C}'$ (of $\C'$-small tuples $a_i$) be such that $a_0\underset{a}{\equiv}a_i$ for all $i<\omega $ and $a \models
p\!\upharpoonright_{a_{<\omega}}$.
Let $\pi(x,y,z)$ be a partial type over the empty set such that for every $i<\omega$ the partial type $\pi(x,y,a_i)$ relatively defines an equivalence relation on $p\!\upharpoonright_{a_i}\!\!(\mathfrak{C'})$. Assume that there is a formula $\varphi(x,y,z)$ implied by $\pi(x,y,z)$ such that for every $i<\omega$ 
$$ \bigcap_{j\neq i}\pi(\C',\C',a_j)\cap (p\!\upharpoonright_{a_{<\omega}}\!\!(\C'))^2 \not\subseteq \varphi(\C',\C',a_i).$$ Then $T$ has IP.
\end{lema}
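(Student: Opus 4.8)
The statement to prove is Lemma~\ref{IP condition}: from the hypotheses (a sequence $(a_i)_{i<\omega}$ all having the same type over $a$, with $a\models p\!\upharpoonright_{a_{<\omega}}$; each $\pi(x,y,a_i)$ relatively defining an equivalence relation on $p\!\upharpoonright_{a_i}(\C')$; and a formula $\varphi(x,y,z)$ implied by $\pi$ such that $\bigcap_{j\neq i}\pi(\C',\C',a_j)\cap(p\!\upharpoonright_{a_{<\omega}}(\C'))^2\not\subseteq\varphi(\C',\C',a_i)$ for every $i$), we must conclude that $T$ has IP. The natural strategy is to produce a formula with the independence property directly, by exhibiting a formula $\chi(x;\bar y)$ and a family of parameter tuples indexed so that every subset of $\omega$ is "cut out" by a single instance. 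I expect the relatively type-definable subsets of $\auto(\C')$ to be the bookkeeping device that lets us pass from the $\bigwedge$-definable data ($\pi$, $p$) to an actual first-order formula witnessing IP.

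**Key steps.** First I would fix, for each $i<\omega$, a witness: by the non-inclusion hypothesis there is a pair $(b_i,c_i)$ with $(b_i,c_i)\in\pi(\C',\C',a_j)$ for all $j\neq i$, with $b_i,c_i\models p\!\upharpoonright_{a_{<\omega}}$, but $(b_i,c_i)\notin\varphi(\C',\C',a_i)$. Since all the $a_i$ have the same type over $a$ and $a$ realizes $p\!\upharpoonright_{a_{<\omega}}$, I would use homogeneity of $\C'$ to move these witnesses around: there should be automorphisms fixing enough of the data and sending $a_0\mapsto a_i$, which (together with the relatively type-definable set formalism $A_{\pi;a;c}(\C'/A)$ introduced just before the lemma) lets me arrange a single pair $(b,c)$ of realizations of $p$ and a family $(\sigma_i)$ or equivalently a family of conjugates of the $a_i$ so that membership of $(b,c)$ in $\pi(\C',\C',a_i)$ versus $\varphi(\C',\C',a_i)$ behaves coherently. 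The second step is to run a compactness/finite-character argument: because each $\pi(x,y,a_i)$ is a partial type relatively defining an equivalence relation and $\varphi$ is a single formula implied by $\pi$, the relation "$(x,y)\in\bigcap_{j\in S}\pi(\C',\C',a_j)$ but $\notin\varphi(\C',\C',a_i)$ for $i\notin S$" can, for each finite configuration, be captured by finitely many formulas; transitivity of the equivalence relations is what makes the intersections over index sets behave like a single instance. Third, I would assemble this into the IP witness: using the realizations $(b_i,c_i)$ above and an indiscernible-type or Ramsey argument (extracting an indiscernible sequence from $(a_i)$ over $a$, which is legitimate since they all have the same type over $a$), I would show that for a suitable formula $\psi(x_1,x_2;z)$ — essentially $\varphi$ composed with the equivalence-relation structure — and for every finite $S\subseteq\omega$ there is a pair realizing $\bigwedge_{i\in S}\neg\psi(\cdot;a_i)\wedge\bigwedge_{i\notin S}\psi(\cdot;a_i)$ (or the reverse), which by compactness gives IP.

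**The main obstacle.** The delicate point is the third step: turning the "for each $i$, the intersection over $j\neq i$ is not contained in $\varphi(\cdot,a_i)$" hypothesis — which is an assertion about each singleton complement separately — into the genuinely combinatorial statement that \emph{all} finite Boolean patterns are realized, which is what IP requires. The hypotheses only directly give us, for each $i$, one pair that lies in all $\pi(\cdot,a_j)$ for $j\neq i$ and outside $\varphi(\cdot,a_i)$; I will need to combine several such pairs using the equivalence-relation (transitivity) structure and the uniformity coming from $a_0\equiv_a a_i$ to get pairs realizing arbitrary patterns. I expect this to go through by: passing to an $a$-indiscernible subsequence of $(a_i)$; using that relatively-type-definable subsets of $\auto(\C')$ corresponding to $\pi(\cdot,a_i)$ form, by transitivity, something like a decreasing/coherent family; and then applying compactness together with the classical equivalence "alternation of unbounded length $\iff$ IP" (or the array characterization) to the formula $\varphi$ itself. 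The technical overhead will be keeping track of which tuples realize $p\!\upharpoonright$ over which parameter sets so that the equivalence-relation hypotheses on $\pi(x,y,a_i)$ genuinely apply to the pairs being manipulated.
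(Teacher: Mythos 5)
Your general framing is right — normalize the witnesses using automorphisms over $a_{<\omega}$, recast everything in terms of relatively type-definable subsets of $\auto(\C')$, and aim to realize every finite Boolean pattern — but the step you yourself flag as the ``main obstacle'' is exactly the missing idea, and your proposed resolution does not supply it. From the hypothesis you get, for each $i$, one witness lying in $\bigcap_{j\neq i}\pi(\cdot,\cdot,a_j)$ and outside $\varphi(\cdot,\cdot,a_i)$; each such witness by itself realizes only the pattern $\omega\setminus\{i\}$. Passing to an $a$-indiscernible subsequence of $(a_i)$ and invoking the alternation characterization of IP cannot manufacture the other patterns: a single witness gives alternation number at most $2$ along the sequence, and ``transitivity makes the sets $\pi(\cdot,\cdot,a_i)$ a coherent family'' is not the relevant structure.

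What the paper actually does is group-theoretic. By Claim \ref{stabilizer}, $A_{\pi;a;a_i}(\C'/a_{<\omega})$ is the \emph{stabilizer subgroup} of the class $[a]_{E_{a_i}}$ in $\auto(\C'/a_{<\omega})$. One chooses the $i$-th witness not as a pair of points but as an automorphism $\sigma_i\in\bigcap_{j\neq i}A_{\pi;a;a_j}(\C'/a_{<\omega})\setminus A_{\varphi;a;a_i}$, and then \emph{composes} them: for finite $I$, set $\sigma_I=\prod_{i\in I}\sigma_i$. Compactness for relatively type-definable subgroups (Claim \ref{compactness}, transported to all indices by Claim \ref{independence of the index}) yields a formula $\theta$ implied by $\pi$ with $A_{\pi;a;a_i}\cdot A_{\theta;a;a_i}\cdot A_{\pi;a;a_i}\subseteq A_{\varphi;a;a_i}$ for every $i$. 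Then $\models\theta(\sigma_I(a),a,a_i)$ iff $i\notin I$: for $i\notin I$ because all factors lie in the group $A_{\pi;a;a_i}(\C'/a_{<\omega})$, and for $i\in I$ because otherwise $\sigma_i=\sigma_{I_0}^{-1}\sigma_I\sigma_{I_1}^{-1}$ would land in $A_\pi\cdot A_\theta\cdot A_\pi\subseteq A_\varphi$, contradicting the choice of $\sigma_i$. The elements $\sigma_I(a)$ then witness IP for $\theta$ directly. Without the choice of witnesses as elements of these stabilizer groups, their composition, and the triple-product compactness step, the argument does not go through; these are the components your proposal would need to add.
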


To prove this result, we need the following three observations on relatively type-definable subsets of $\auto(\C')$ of special kind.

\begin{external claim}\label{stabilizer} 
 Let $a$, $(a_i)_{i<\omega}$, and  $\pi(x,y,z)$ be as in Lemma \ref{IP condition}, and let $E_{a_i}$ be the equivalence relation on $p\!\upharpoonright_{a_i}\!\!(\C')$ relatively defined by $\pi(x,y,a_i)$. Then, for all $i<\omega$, $ A_{\pi;a;a_i}(\C'/a_{i})$ is the stabilizer of the class $[a]_{E_{a_i}}$ under the action of $\auto(\C'/a_{i})$, and 
	$ A_{\pi;a;a_i}(\C'/a_{<\omega})$ is the stabilizer of the class $[a]_{E_{a_i}}$ under the action of $\auto(\C'/a_{<\omega})$.
\end{external claim}

\begin{proof}
It is clear that $\auto(\C'/a_i)$ preserves both $p\!\upharpoonright_{a_i}\!\!(\C')$ and $E_{a_i}$.

Let $\sigma\in A_{\pi;a;a_i}(\C'/a_i)$. By the definition of $A_{\pi;a;a_i}$, we have $\models \pi(\sigma(a),a,a_i)$. Hence, $\sigma(a)\in [a]_{E_{a_i}}$, and so $\sigma([a]_{E_{a_i}})= [a]_{E_{a_i}}$. Thus, we have proved that 
$$ A_{\pi;a;a_i}(\C'/a_{i})\subseteq \stab_{\auto(\C'/a_{i})}([a]_{E_{a_i}}).$$

	Conversely, let $\sigma\in\stab_{\auto(\C'/a_{i})}([a]_{E_{a_i}})$. This implies $\sigma(a)E_{a_i}a$. Hence, $\models \pi(\sigma(a),a,a_i)$, and so $\sigma\in  A_{\pi;a;a_i}(\C'/a_i)$. Thus, $$\stab_{\auto(\C'/a_{i})}([a]_{E_{a_i}}) \subseteq A_{\pi;a;a_i}(\C'/a_{i}).$$
	
	The same proof works for $\stab_{\auto(\C'/a_{<\omega})}([a]_{E_{a_i}})$.
\end{proof}	
	
	

\begin{external claim}\label{compactness}
Let  $a$, $a_0$, and $\pi(x,y,z)$ be as in Lemma \ref{IP condition}. Then, for each formula $\varphi(x,y,z)$ implied by $\pi(x,y,z)$ there is a formula $\theta(x,y,z)$ implied by $\pi(x,y,z)$ such that 
	$$  A_{\pi;a;a_0}(\C'/a_0)\cdot A_{\theta;a;a_0}(\C'/a_0)\cdot A_{\pi;a;a_0}(\C'/a_0) \subseteq A_{\varphi;a;a_0}(\C'/a_0).$$
\end{external claim}

\begin{proof}
	Let us consider the type $\pi'(x_1,x_2;y,z):=\pi(x_1,y,z)\cup \{x_2=z\}$. Then,  $$ A_{\pi;a;a_0}(\C'/a_0)=A_{\pi'(x_1,x_2;y,z); aa_0;a,a_0} .$$ Hence, by the previous claim, $A_{\pi'(x_1,x_2;y,z); aa_0;a,a_0}$ is a group, so it satisfies $$A_{\pi'(x_1,x_2;y,z); aa_0;a,a_0}^3=A_{\pi'(x_1,x_2;y,z); aa_0;a,a_0}.$$ For any formula $\varphi(x,y,z)$ implied by $\pi(x,y,z)$ we have $$A_{\pi'(x_1,x_2;y,z); aa_0;a,a_0}^3\subseteq A_{\varphi(x;y,z); a;a,a_0}.$$ 
Applying compactness (\cite[Corollary 4.8]{hrushovski2021order}), for each $\varphi(x,y,z)$ implied by $\pi(x,y,z)$ there is some $\theta(x,y,z)$ implied by $\pi(x,y,z)$ such that $$ A_{\pi'(x_1,x_2;y,z); aa_0;a,a_0} \cdot A_{\{x_2=z\}\wedge\theta(x_1;y,z); aa_0;a,a_0} \cdot A_{\pi'(x_1,x_2;y,z); aa_0;a,a_0} \subseteq  A_{\varphi(x;y,z); a;a,a_0}.$$
 Finally, since every automorphism on the left hand side belongs to $\auto(\C'/a_0)$, we conclude that $$  A_{\pi;a;a_0}(\C'/a_0)\cdot A_{\theta;a;a_0}(\C'/a_0)\cdot A_{\pi;a;a_0}(\C'/a_0) \subseteq A_{\varphi;a;a_0}(\C'/a_0).$$
\end{proof}

\begin{external claim}\label{independence of the index}
Let $a$, $(a_i)_{i<\omega}$, and $\pi(x,y,z)$ be as in Lemma \ref{IP condition}. Then, for any formulas $\varphi(x,y,z)$ and $\theta(x,y,z)$ implied by $\pi(x,y,z)$, for every $i<\omega$: $$  A_{\pi;a;a_0}(\C'/a_0)\cdot A_{\theta;a;a_0}(\C'/a_0)\cdot A_{\pi;a;a_0}(\C'/a_0) \subseteq A_{\varphi;a;a_0}(\C'/a_0).$$
	if and only if 
	$$  A_{\pi;a;a_i}(\C'/a_i)\cdot A_{\theta;a;a_i}(\C'/a_i)\cdot A_{\pi;a;a_i}(\C'/a_i) \subseteq A_{\varphi;a;a_i}(\C'/a_i).$$
\end{external claim}

\begin{proof}
	Let $\tau\in\auto(\C'/a)$ be such that $\tau(a_0)=a_i$. The conjugation by $\tau$ \begin{align*}
		\auto(\C'/a_0)&\to \auto(\C'/a_i)\\
		\sigma\hspace{20pt} &\mapsto \hspace{10pt} \tau\sigma\tau^{-1}
	\end{align*}
	is a bijection whose inverse is the conjugation by $\tau^{-1}$. Moreover, $$ \models \pi(\tau\sigma\tau^{-1}(a),a,a_i)\iff \models \pi(\sigma\tau^{-1}(a),a,a_0)\iff \models\pi(\sigma(a),a,a_0).$$
Analogous equivalences also hold for $\varphi$ and for $\theta$ in place of $\pi$. Hence, the desired equivalence follows by applying the conjugation by $\tau$.
\end{proof}

We are now ready to prove Lemma \ref{IP condition}.

\begin{proof} [{\bf Proof of Lemma \ref{IP condition}}]
	Note that for all $i<\omega$, using automorphisms of $\C'$ fixing $(a_i)_{i<\omega}$, we can reduce the condition $$ \bigcap_{j\neq i}\pi(\C',\C',a_j)\cap (p\!\upharpoonright_{a_{<\omega}}\!\!(\C'))^2\not\subseteq \varphi(\C',\C',a_i)$$ to $$ \bigcap_{j\neq i}\pi(\C',a,a_j)\cap p\!\upharpoonright_{a_{<\omega}}\!\!(\C')\not\subseteq \varphi(\C',a,a_i),$$
	because, given a pair $(c,d)$ witnessing the former condition, there exists some \\ $\sigma\in\auto(\C'/a_{<\omega})$ such that $\sigma(d)=a$, and then the pair $(\sigma(c),a)$ witnesses the latter condition. Moreover, using the same approach, one can see that the latter condition can be expressed using relatively type-definable 
subsets of $\auto(\C')$ as $$A_{\bigwedge_{j\neq i}\pi(x;y,z_j);a;a,(a_j)_{j\neq i}}(\C'/a_{<\omega}) \not\subseteq A_{\varphi(x;y,z_i);a;a,a_i}.$$
	
	For every $i<\omega$, choose some $$\sigma_i\in A_{\bigwedge_{j\neq i}\pi(x;y,z_j);a;a,(a_j)_{j\neq i}}(\C'/a_{<\omega}) \setminus A_{\varphi(x;y,z_i),a,a,a_i},$$ and let $\sigma_{I}$ denote the composition $\prod_{i\in I}\sigma_i$, for any finite $I\subseteq \omega$.

	By Claims \ref{compactness} and \ref{independence of the index}, there is a formula $\theta(x,y,z)$ implied by $\pi(x,y,z)$ such that for all $i<\omega$
	$$ A_{\pi;a;a_i}(\C'/a_i)\cdot A_{\theta;a;a_i}(\C'/a_i)\cdot A_{\pi;a;a_i}(\C'/a_i) \subseteq A_{\varphi;a;a_i}(\C'/a_i). $$ 

	\begin{claim}
		For any finite $I\subseteq \omega$ $$\models \theta(\sigma_I(a),a,a_i)\iff i\notin I.$$
	\end{claim}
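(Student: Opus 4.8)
The plan is to prove the claim by induction on $|I|$, exploiting the defining properties of the $\sigma_i$ and the inclusion $A_{\pi;a;a_i}(\C'/a_i)\cdot A_{\theta;a;a_i}(\C'/a_i)\cdot A_{\pi;a;a_i}(\C'/a_i) \subseteq A_{\varphi;a;a_i}(\C'/a_i)$ obtained just before the claim. First I would record the two basic facts about the chosen automorphisms: for each $i$, $\sigma_i \in A_{\pi;a;a_j}(\C'/a_{<\omega})$ for every $j \ne i$ (since $\sigma_i \in A_{\bigwedge_{j\ne i}\pi(x;y,z_j);a;a,(a_j)_{j\ne i}}(\C'/a_{<\omega})$), while $\sigma_i \notin A_{\varphi;a;a_i}$, i.e. $\not\models \varphi(\sigma_i(a),a,a_i)$, hence a fortiori $\not\models \theta(\sigma_i(a),a,a_i)$ and also $\not\models \pi(\sigma_i(a),a,a_i)$ (as $\pi \vdash \varphi$ and $\pi \vdash \theta$). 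Note all $\sigma_i$ lie in $\auto(\C'/a_{<\omega})$.

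For the direction $i \notin I \Rightarrow\ \models \theta(\sigma_I(a),a,a_i)$: if $i \notin I$, then every factor $\sigma_j$ of $\sigma_I = \prod_{j\in I}\sigma_j$ has $j \ne i$, so each $\sigma_j \in A_{\pi;a;a_i}(\C'/a_i)$ by Claim \ref{stabilizer} (this set is the stabilizer of $[a]_{E_{a_i}}$, hence a subgroup), and therefore $\sigma_I \in A_{\pi;a;a_i}(\C'/a_i) \subseteq A_{\theta;a;a_i}(\C'/a_i)$ since $\pi \vdash \theta$; in particular $\models \theta(\sigma_I(a),a,a_i)$. For the converse direction $i \in I \Rightarrow\ \not\models \theta(\sigma_I(a),a,a_i)$: write $\sigma_I = \rho_1 \sigma_i \rho_2$ where $\rho_1$ and $\rho_2$ are products of $\sigma_j$'s with $j \ne i$ (coming from the part of the product before, respectively after, the $i$-th factor). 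Both $\rho_1, \rho_2 \in A_{\pi;a;a_i}(\C'/a_i)$ by the stabilizer description as above. Now suppose for contradiction that $\models \theta(\sigma_I(a),a,a_i)$, i.e. $\sigma_I = \rho_1 \sigma_i \rho_2 \in A_{\theta;a;a_i}(\C'/a_i)$. Then, using that $A_{\pi;a;a_i}(\C'/a_i)$ is a subgroup (so $\rho_1^{-1}, \rho_2^{-1}$ lie in it) together with the containment $A_{\pi;a;a_i}\cdot A_{\theta;a;a_i}\cdot A_{\pi;a;a_i} \subseteq A_{\varphi;a;a_i}$, we get $\sigma_i = \rho_1^{-1}(\rho_1\sigma_i\rho_2)\rho_2^{-1} \in A_{\pi;a;a_i}(\C'/a_i) \cdot A_{\theta;a;a_i}(\C'/a_i) \cdot A_{\pi;a;a_i}(\C'/a_i) \subseteq A_{\varphi;a;a_i}(\C'/a_i)$, i.e. $\models \varphi(\sigma_i(a),a,a_i)$, contradicting the choice of $\sigma_i$. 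Hence $\not\models \theta(\sigma_I(a),a,a_i)$, completing the claim.

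I expect the only subtle point to be bookkeeping: making sure the factorization $\sigma_I = \rho_1\sigma_i\rho_2$ is legitimate, that all the $\rho$'s and their inverses genuinely lie in $A_{\pi;a;a_i}(\C'/a_i)$ — which is exactly where Claim \ref{stabilizer} (stabilizers are subgroups, closed under products and inverses) is used — and that $\theta$ does not depend on $i$, which is precisely what Claims \ref{compactness} and \ref{independence of the index} were invoked for just before the claim. With the claim in hand, the remainder of the proof of Lemma \ref{IP condition} (not part of the present statement) will be to observe that the $\C'$-small tuples $\sigma_I(a)$, $I$ ranging over finite subsets of $\omega$, together with the parameters $a$ and $(a_i)_{i<\omega}$, witness that the formula $\theta(x,y,z)$ has IP, since $\models\theta(\sigma_I(a),a,a_i)$ holds exactly when $i\notin I$; but this is beyond the statement I was asked to address.
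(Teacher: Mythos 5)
Your proof is correct and follows essentially the same route as the paper: the forward direction uses that $A_{\pi;a;a_i}$ is a group (Claim \ref{stabilizer}) so the product of the $\sigma_j$, $j\ne i$, stays in it, and the backward direction factors $\sigma_I=\sigma_{I_0}\sigma_i\sigma_{I_1}$ and uses the triple-product containment from Claims \ref{compactness} and \ref{independence of the index} to derive $\sigma_i\in A_{\varphi;a;a_i}$, contradicting its choice. Your version merely spells out the conjugation by $\rho_1^{-1},\rho_2^{-1}$ that the paper leaves implicit.
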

	\begin{proof}[{Proof of claim}]
		Firstly, take $i\not\in I$. Then, for every $j\in I$, $\sigma_j$ belongs to the set $ A_{\pi;a;a_i}(\C'/a_{<\omega})$. By Claim \ref{stabilizer}, the set $ A_{\pi;a;a_i}(\C'/a_{<\omega})$ is a group, and so we get  $\sigma_{I}\in  A_{\pi;a;a_i}(\C'/a_{<\omega})$. Hence, $\theta(\sigma_I(a),a,a_i)$ holds.

		Now take $i\in I$ and write $I:=I_0\sqcup \{i\} \sqcup I_1$, where $I_0=\{j\in I : j<i\}$ and $I_1=\{j\in I : j>i\}$. For each $j\in I_0\cup I_1$ we have $\sigma_j\in \ A_{\pi;a;a_i}(\C'/a_{<\omega})$. Then, $\theta(\sigma_{I}(a),a,a_i)$ does not hold. Otherwise, $$ \sigma_I = \sigma_{I_0} \sigma_i \sigma_{I_1}\in  A_{\theta;a;a_i}(\C'/a_{<\omega}),$$
		which, by 
	Claim \ref{stabilizer} and	the choice of $\theta$, implies $$\sigma_i\in  A_{\varphi;a;a_i}(\C'/a_{<\omega}),$$ 
		a contradiction with our choice of $\sigma_i$. 
	\end{proof}
	The formula $\theta$ witnesses that $T$ has IP.
\end{proof}

When we write (NIP) in the statement of a result, it means that we assume that the theory $T$ has NIP.

\begin{lema}[NIP]\label{existence of an special index}
	Let $\pi(x,y,z)$ be a partial type over the empty set (with a $\C'$-small $z$), and let $a_0\subseteq \mathfrak{C}'$ be such
	 that $\pi(x,y,a_0)$ relatively defines an equivalence relation on $p\!\upharpoonright_{a_0}\!\!(\mathfrak{C}')$. Then, for any $(a_i)_{i<\lambda}$, where $\lambda \geq \beth_{(2^{(\lvert a_0\rvert +\lvert x \rvert + \lvert T\rvert + \lvert A\rvert)})^+}$ and 
	  $a_i\underset{A}{\equiv}a_0$ for all $i<\lambda$,  there exists $i<\lambda$ such that $$\bigcap_{j\neq i} \pi(\C',\C',a_j)\cap(p\!\upharpoonright_{a_{<\lambda}}\!\!(\C'))^2\subseteq\pi(\C',\C',a_i).$$
\end{lema}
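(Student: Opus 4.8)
I would prove this by contraposition, deriving IP from the failure of the conclusion, via Lemma \ref{IP condition}. Suppose toward a contradiction that for every $i<\lambda$ there is a witness pair showing $\bigcap_{j\neq i}\pi(\C',\C',a_j)\cap(p\!\upharpoonright_{a_{<\lambda}}\!\!(\C'))^2\not\subseteq\pi(\C',\C',a_i)$; since $\pi$ is a partial type, each such non-containment is witnessed already by a single formula $\varphi_i(x,y,z)$ implied by $\pi(x,y,z)$, i.e.\ $\bigcap_{j\neq i}\pi(\C',\C',a_j)\cap(p\!\upharpoonright_{a_{<\lambda}}\!\!(\C'))^2\not\subseteq\varphi_i(\C',\C',a_i)$. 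The point of the large cardinal bound $\lambda\geq\beth_{(2^{(\lvert a_0\rvert+\lvert x\rvert+\lvert T\rvert+\lvert A\rvert)})^+}$ is exactly to run an Erd\H{o}s--Rado / pigeonhole argument: there are at most $2^{\lvert T\rvert}$ choices of the formula $\varphi_i$, so some fixed $\varphi$ works for a set of indices of size $\geq\beth_{(2^{(\lvert a_0\rvert+\lvert x\rvert+\lvert T\rvert+\lvert A\rvert)})^+}$ as well (re-indexing, we may assume $\varphi_i=\varphi$ for all $i<\lambda$, still with $\lambda$ above this bound). Then I would extract an $A$-indiscernible sequence: apply the standard Erd\H{o}s--Rado argument (in the form that from a sequence of length $\beth_{(2^{\mu})^+}$ of $\mu$-tuples one extracts an infinite $A$-indiscernible subsequence, where here $\mu=\lvert a_0\rvert+\lvert x\rvert+\lvert T\rvert+\lvert A\rvert$) to get an infinite $A$-indiscernible sequence $(a_i')_{i<\omega}$ realizing types from the original sequence, so that $a_i'\underset{A}{\equiv}a_0$ for all $i$ and the non-containment with the fixed $\varphi$ persists for each $i<\omega$ (non-containment of a fixed formula inside a fixed finite-or-infinite intersection is a property of finitely many indices at a time, so it transfers along the indiscernible sequence).

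**Repairing the hypotheses of Lemma \ref{IP condition}.** The $A$-indiscernible sequence $(a_i')_{i<\omega}$ I extracted need not satisfy the two extra hypotheses of Lemma \ref{IP condition}: that $a_0'\underset{a}{\equiv}a_i'$ for a single common $a$, and that $a\models p\!\upharpoonright_{a_{<\omega}'}$. To fix this, I would use saturation and homogeneity of $\C'$ together with the invariance of $p$: since $(a_i')_{i<\omega}$ is $A$-indiscernible, by the standard trick one can find $a$ with $a\underset{A}{\equiv}$ (the common type) such that in fact $a_0'\underset{a}{\equiv}a_i'$ for all $i$ — concretely, stretch the indiscernible sequence, realize a type saying all the $a_i'$ look alike over one new parameter, then use homogeneity; alternatively one observes that an $A$-indiscernible sequence of realizations of $\tp(a_0/A)$ can always be assumed to be $ca$-indiscernible for a suitable $a$ realizing $p$ over the whole sequence, precisely because $p$ is $A$-invariant so $p\!\upharpoonright_{a_{<\omega}'}$ is a consistent type that we may realize. (This is the standard way these two hypotheses are arranged in the proof of Lemma \ref{IP condition} applications; I would spell it out but it is routine.) Once these are in place, the displayed non-containment with the fixed formula $\varphi$ is exactly the hypothesis of Lemma \ref{IP condition}, which then yields that $T$ has IP — contradicting NIP. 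Hence the conclusion of the lemma holds.

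**Where the difficulty lies.** The combinatorial core — pigeonhole on the formula plus Erd\H{o}s--Rado to get indiscernibility, which is why the bound $\beth_{(2^{\mu})^+}$ appears — is the main quantitative content, but it is standard. The step I expect to need the most care is the reduction ensuring that the extracted $A$-indiscernible sequence can be taken together with a single $a$ realizing $p$ over the whole sequence and with $a_0'\underset{a}{\equiv}a_i'$, i.e.\ massaging the situation into precisely the form demanded by Lemma \ref{IP condition}; this uses $A$-invariance of $p$ crucially (so that $p\!\upharpoonright_{a_{<\omega}'}$ makes sense and is the right extension) and a modest amount of saturation of $\C'$. A secondary subtlety is checking that ``$\pi(x,y,a_i)$ relatively defines an equivalence relation on $p\!\upharpoonright_{a_i}\!\!(\C')$'' transfers from $a_0$ to all the $a_i$ and then to the $a_i'$ — but this is immediate since $a_i\underset{A}{\equiv}a_0$ and $p$ is $A$-invariant, so an automorphism over $A$ carries the whole configuration over, and the property survives passing to the indiscernible subsequence for the same reason.
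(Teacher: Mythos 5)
Your proposal is correct and follows essentially the same route as the paper: contraposition, Erd\H{o}s--Rado extraction of an $A$-indiscernible sequence (which is exactly what the cardinal bound is for), fixing a single formula $\varphi$ implied by $\pi$, realizing $a \models p\!\upharpoonright_{a'_{<\omega}}$ and using $A$-invariance of $p$ to get $a_i' \equiv_a a_0'$ and to transfer the equivalence-relation property to the $a_i'$, and then invoking Lemma \ref{IP condition}. The one place where you are looser than the paper is the transfer of the non-containment to the extracted sequence: the paper extracts indiscernibles from the triples $(a_i,b_i,c_i)$ so the witness pairs travel along automatically, whereas extracting only the $a_i$'s (after your pigeonhole on $\varphi$) forces a compactness argument --- each finite fragment of $\bigcup_{j\ne i}\pi(x,y,a_j')\cup p\!\upharpoonright_{a'_{<\omega}}(x)\cup p\!\upharpoonright_{a'_{<\omega}}(y)\cup\{\neg\varphi(x,y,a_i')\}$ must be pulled back via the extraction property and $A$-invariance of $p$ to the original sequence, where it is realized by the original witness $(b_k,c_k)$ --- which does go through but is precisely the step you wave off as ``a property of finitely many indices at a time.''
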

\begin{proof}
	Assume the conclusion does not hold. 
	Then, for every $i<\lambda$ $$\bigcap_{j\neq i} \pi(\C',\C',a_j)\cap(p\upharpoonright_{a_{<\lambda}}\!\!(\C'))^2\not\subseteq\pi(\C',\C',a_i).$$ Take pairs $(b_i,c_i)_{i<\lambda}$ witnessing it. 
Let $(a_i',b_i',c_i')_{i<\omega} \subseteq \C'$ be an $A$-indiscernible sequence obtained by extracting indiscernibles from the sequence $(a_i,b_i,c_i)_{i<\lambda}$ (e.g. see \cite[Lemma 1.2]{doi:10.1142/S0219061303000297}).
Then, since $p$ is $A$-invariant, for all $i<\omega$ the elements $(a_i',b_i',c_i')$ satisfy:
	\begin{align*}
		(b_i',c_i')&\in\bigcap_{j\neq k}\pi(\C',\C',a_j')\cap (p\!\upharpoonright_{a'_{<\omega}}\!\!(\C'))^2;\\
		(b_i',c_i')&\not\in\pi(\C',\C',a_i');\\
		a_i'&\equiv_{A}a_0' \equiv_A a_0.
	\end{align*} 
(Note that the $A$-invariance of $p$, together with the property of being an extracted sequence, is used to ensure that $(b_i',c_i')$ belongs to $p\!\upharpoonright_{a'_{<n}}\!\!(\C')^2$ for each $n \in \omega$.) By the indiscernibility of the sequence $(a_i',b_i',c_i')_{i<\omega}$, there exists a formula $\varphi(x,y,z)$ implied by $\pi(x,y,z)$ such that for all $i<\omega$ $$(b_i',c_i')\not\in\varphi(\C',\C',a_i').$$
Take any $a \models p\!\upharpoonright_{a'_{<\omega}}$. Since $p$ is $A$-invariant, $a_i'\equiv_A a_j'$ implies  $a_i'\equiv_a a_j'$.
Moreover, since $a_i' \equiv_A a_0' \equiv_A a_0$, $\pi(x,y,a_0)$ relatively defines an equivalence relation on $p\!\upharpoonright_{a_0}\!\!(\mathfrak{C}')$, and $p$ is $A$-invariant, we get that $\pi(x,y,a_i')$ relatively defines an equivalence relation on $p\!\upharpoonright_{a_i'}\!\!(\mathfrak{C}')$ for all $i<\omega$.

Hence, the sequence $(a_i')_{i<\omega}$ together with $a$, $\pi(x,y,z)$, and $\varphi(x,y,z)$ satisfies the assumptions of Lemma \ref{IP condition}, and so we get IP, which is a contradiction.
\end{proof}



The next theorem is the main result of this paper.

\begin{teor}[NIP]\label{Est exists}
		Let $p(x)\in S_x(\mathfrak{C})$ be an $A$-invariant type with a $\C$-small $x$. Assume that the degree of saturation of $\C$ is at least $\beth_{(\beth_{2}(\lvert x \rvert+\lvert T \rvert + \lvert A\rvert))^+}$. Then, there exists a finest equivalence relation $E^{st}$ on $p(\C')$ relatively type-definable over a $\C$-small set of parameters from $\C$ and with stable quotient $p(\C')/E^{st}$. 
\end{teor}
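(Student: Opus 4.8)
The plan is to construct $E^{st}$ as the intersection of \emph{all} relatively type-definable (over $\C$-small parameter sets from $\C$) equivalence relations on $p(\C')$ with stable quotient, and then argue that this intersection is itself relatively type-definable over a $\C$-small set and still has stable quotient. The first point — that the intersection of any family of relatively type-definable equivalence relations with stable quotients again has stable quotient — follows from \cite[Remark 1.4]{MR3796277}, since a subset of a product of stable hyperdefinable sets is stable, and the quotient by the intersection embeds into the product of the quotients. The real content is a bound: I want to show that only \emph{boundedly many} parameter sets are needed, i.e. that $E^{st}$ already equals the intersection of the $E$'s coming from parameter sets whose cardinality is at most some fixed $\lambda < \kappa$, and moreover that this intersection is relatively defined by a partial type of bounded size.

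The key reduction is Proposition \ref{decomposition of equivalence relations}: every $B$-relatively type-definable equivalence relation on $p(\C')$ decomposes as an intersection of \emph{countably} relatively defined equivalence relations, each relatively defining an equivalence relation already on $p\!\upharpoonright_{B_i}\!\!(\C')$ for a countable $B_i$. So $E^{st}$ is the intersection of countably-relatively-defined equivalence relations $E_i$, each coming from a partial type $\pi_i(x,y,a_i)$ over the empty set with $a_i$ of size $\leq |x| + |T| + \aleph_0$, such that $\pi_i(x,y,a_i)$ relatively defines an equivalence relation on $p\!\upharpoonright_{a_i}\!\!(\C')$ and the quotient $p(\C')/E_i$ is stable. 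Up to the choice of $a_i$ this is governed by finitely many (well, boundedly many) \emph{types} $\tp(a_i/A)$ together with a partial type $\pi_i$ over $\emptyset$; so the parameters $a_i$ range, up to conjugacy over $A$, over a set of size $\leq 2^{|x|+|T|+|A|}$, and for each such $A$-conjugacy class there are at most $2^{|T|}$ relevant partial types $\pi_i$. Here is where Lemma \ref{existence of an special index} does the essential work: it says that for $\lambda \geq \beth_{(2^{(|a_0| + |x| + |T| + |A|)})^+} = \beth_{(\beth_2(|x|+|T|+|A|))^+}$ (using $|a_0| \leq |x| + |T| + \aleph_0$), among any $\lambda$-many $A$-conjugates $(a_i)_{i<\lambda}$ of a fixed $a_0$ there is one index $i$ with $\bigcap_{j\neq i}\pi(\C',\C',a_j)\cap (p\!\upharpoonright_{a_{<\lambda}}\!\!(\C'))^2 \subseteq \pi(\C',\C',a_i)$; iterating/transfinitely this forces that the full intersection over \emph{all} $A$-conjugates of $a_0$ (for a fixed $\pi$) is already achieved by a subintersection of size $< \beth_{(\beth_2(|x|+|T|+|A|))^+}$. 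Since the saturation degree of $\C$ is at least this cardinal, such a small set of conjugates lives inside $\C$, so the whole intersection is relatively type-definable over a $\C$-small subset of $\C$.

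Concretely, I would: (1) let $\mathcal F$ be the family of all pairs $(\pi, \tp(a/A))$ where $\pi(x,y,z)$ is a partial type over $\emptyset$, $a \equiv_A a$ has $|a| \leq |x|+|T|+\aleph_0$, $\pi(x,y,a)$ relatively defines an equivalence relation on $p\!\upharpoonright_a\!\!(\C')$, and the resulting equivalence relation on $p(\C')$ has stable quotient; note $|\mathcal F| \leq 2^{|x|+|T|+|A|}$; (2) for each $(\pi,\tp(a_0/A)) \in \mathcal F$, let $E_{\pi,a_0}^{\infty}$ be the intersection of $\pi(\C',\C',a)\cap p(\C')^2$ over \emph{all} $a \equiv_A a_0$ in $\C'$, and use Lemma \ref{existence of an special index} (transfinitely, discarding redundant indices one at a time) to show $E^{\infty}_{\pi,a_0}$ equals the intersection over some subfamily of size $< \beth_{(\beth_2(|x|+|T|+|A|))^+} \leq \kappa$, hence is relatively type-definable over a $\C$-small subset of $\C$ by saturation; (3) observe $E^{\infty}_{\pi,a_0}$ still has stable quotient (intersection of stable quotients), and set $E^{st} := \bigcap_{(\pi,a_0)\in\mathcal F} E^{\infty}_{\pi,a_0}$, which is then relatively type-definable over a $\C$-small subset of $\C$ (a bounded intersection of such) and has stable quotient; (4) check minimality: any relatively type-definable $E_B$ over a $\C$-small $B \subseteq \C$ with stable quotient decomposes via Proposition \ref{decomposition of equivalence relations} into countably-relatively-defined pieces, each of which — after conjugating its countable parameter into a standard representative and after noting its own $A$-conjugates are all accounted for in the corresponding $E^{\infty}_{\pi,a_0}$ — contains $E^{st}$; hence $E^{st} \subseteq E_B$.

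The main obstacle I anticipate is step (2): turning the single-index statement of Lemma \ref{existence of an special index} into the conclusion that a \emph{bounded} subintersection suffices for the intersection over \emph{all} (unboundedly many) $A$-conjugates. The clean way is a transfinite elimination argument: well-order all $A$-conjugates of $a_0$ in $\C'$, and greedily keep an index only if it strictly shrinks the running intersection; Lemma \ref{existence of an special index} (applied to the kept indices, which by construction each fail the containment $\bigcap_{j\neq i}\pi\cdots \subseteq \pi(\C',\C',a_i)$ relative to the kept sub-sequence — one must be slightly careful that the relevant $p\!\upharpoonright$ is taken over exactly the kept tuples, which is fine since $p$ is $A$-invariant and restrictions are monotone) forces the set of kept indices to have size $< \beth_{(\beth_2(|x|+|T|+|A|))^+}$, for otherwise $\lambda$ kept indices would contradict the lemma. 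One subtlety to handle with care is that Lemma \ref{existence of an special index} is stated with $\pi$ on \emph{both} sides of the containment, whereas Lemma \ref{IP condition} and the decomposition produce formulas $\varphi$ implied by $\pi$; but since $\pi$ is (wlog) closed under conjunction, the $\pi$-version is exactly what is needed to conclude that the running intersection stabilizes, and passing between $\pi$ and its finite approximations is handled by the compactness already packaged into Lemma \ref{IP condition}'s proof. Finally, one must confirm the bookkeeping on cardinals: $|a_0| \leq |x| + |T| + \aleph_0$, so $2^{(|a_0|+|x|+|T|+|A|)} = 2^{(|x|+|T|+|A|)} = \beth_2(\ldots)/\beth_1(\ldots)$-level, giving exactly the threshold $\beth_{(\beth_2(|x|+|T|+|A|))^+}$ appearing in the hypothesis, so the saturation assumption on $\C$ is precisely strong enough to house all the small parameter sets produced.
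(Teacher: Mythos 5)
Your overall architecture (reduce to boundedly many ``shapes'' $(\pi,\tp(a_0/A))$ via Proposition \ref{decomposition of equivalence relations}, count types, and show that long strictly decreasing intersections of $A$-conjugate relations cannot occur) is exactly the paper's preliminary Claim, and that part is fine. The gap is in your step (2): Lemma \ref{existence of an special index} does \emph{not} imply that the transfinite greedy elimination terminates before $\nu$. The lemma produces an index $i$ with $\bigcap_{j\neq i}\pi(\C',\C',a_j)\cap(p\!\upharpoonright_{a_{<\lambda}}\!\!(\C'))^2\subseteq\pi(\C',\C',a_i)$, where the intersection ranges over \emph{all} $j\neq i$, including $j>i$. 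This is perfectly compatible with every initial-segment intersection $\bigcap_{j<i}E_{a_j}$ failing to be contained in $E_{a_i}$, which is what your ``kept'' indices witness; so no contradiction arises and the chain need not stabilize. Indeed, your mechanism never uses stability of the quotients $p(\C')/E_{a_i}$, and Lemma \ref{existence of an special index} does not assume it either; but Example 2 of the paper exhibits, in a NIP theory, a single countable $\emptyset$-type $\pi(x,y,z)$ and arbitrarily long sequences of $A$-conjugate parameters (pairwise archimedean-inequivalent positive infinitesimals $a_i$, defining the relations $E_{\{a_i\}}$) whose partial intersections form a strictly decreasing chain of length up to $\kappa$. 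If your argument were valid it would apply verbatim to that family and bound the chain, which is false. So the termination of the chain genuinely requires the stability hypothesis, and your proof never brings it to bear at that point.

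The paper closes this gap as follows. Assuming a long chain exists, it extracts an $A$-indiscernible sequence $(a_i',b_i',c_i')_{i<\nu}$ of parameters together with one-sided witnesses, applies Lemma \ref{existence of an special index} to locate an index $\beta$ that is redundant in the two-sided sense $(*)$, and then inserts an $\omega$-sequence $(d_i',e_i',f_i')$ at position $\beta$ preserving indiscernibility. The witnesses at this stage satisfy only $\pi(e_i',f_i',d_j')$ for $j<i$. The key move is to let $E$ be the relation relatively defined by $\bigcup_{\alpha\neq\beta}\pi(x,y,a_\alpha')$, observe that $X/E$ is \emph{stable} (a bounded intersection of stable quotients), and use stability of $X/E$ on the $B$-indiscernible sequence $(d_i',[e_i']_E,[f_i']_E)$ to get the type symmetry $\tp(d_j',[e_i']_E,[f_i']_E/B)=\tp(d_i',[e_j']_E,[f_j']_E/B)$; an automorphism argument combined with $(*)$ then upgrades the one-sided witnesses to the full alternation pattern $\pi(e_j',f_j',d_i')$ for all $i\neq j$ together with $\neg\varphi(e_i',f_i',d_i')$, which contradicts NIP via Lemma \ref{IP condition}. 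This symmetrization step, which is where the stability of the quotients actually does its work, is what is missing from your proposal.
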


\begin{proof}
	Let $\nu:=\beth_{(\beth_{2}(\lvert x \rvert+\lvert T \rvert + \lvert A\rvert))^+}$.

	\begin{claim}
		If for every countable partial type $\pi(x,y,z)$ over the empty set and countable tuple $a_0$ from $\mathfrak{C}$ such that $\pi(x,y,a_0)$ relatively defines an equivalence relation $E_{a_0}$ on $p(\mathfrak{C}')$ with stable quotient there is no sequence $(a_i)_{i<\nu}$ of (countable) tuples $a_i$ in $\C$ such that for all $i<\nu$ we have $a_i\underset{A}{\equiv}a_0$ and  $\bigcap_{j<i}E_{a_j}\not\subseteq E_{a_i}$, then the theorem holds.
	\end{claim}

\begin{proof}[Proof of claim]
		Consider an arbitrary collection $(E_i)_{i\in I}$ of 
		of equivalence relations on $p(\C')$ relatively type-definable over a $\C$-small subset of $\C$ and with stable quotients. Our goal is to prove that the intersection $\bigcap_{i\in I}E_i$ is a relatively type-definable over a $\C$-small subset of $\C$ equivalence relation on $p(\C')$ with stable quotient. 
		
		Using Proposition \ref{decomposition of equivalence relations}, we can write each $E_j$ as $\bigcap_{i\in I_j} F^i_j $, where each $F^i_j$ is 
		a type-definable equivalence relation on $p(\C')$ countably relatively definable over a countable subset of $\C$. Since the $F^i_j$'s are coarser than the corresponding $E_j$, each $F^i_j$ also has stable quotient. We can now write $$ \bigcap_{j\in I}E_j=\bigcap_{j\in I}\bigcap_{i\in I_j} F^i_j.$$
		Note that the number of possible countable types  
over $\emptyset$ whose instances relatively define the $F^i_j$'s is bounded by $2^{\lvert x\rvert+\lvert T\rvert}$, and the set of types over $A$ of the countable tuples of parameters used in the relative definitions of the $F^i_j$'s is bounded by $2^{\lvert T \rvert + \lvert A \rvert}$.
		 Hence, by the assumptions of the claim, the intersection $\bigcap_{j\in I}E_j$ coincides with an intersection $\bigcap_{k\in K}F^{i_k}_{j_k}$, where $\lvert K\rvert \leq 2^{\lvert T \rvert + \lvert A \rvert} \times  2^{\lvert T \rvert + \lvert x \rvert}\times \nu = \nu$. In fact, since $2^{\lvert T \rvert + |A|+ \lvert x \rvert}$ is strictly smaller than the cofinality of $\nu$, we can even get $|K| < \nu$.
		  Finally, by \cite[Remark 1.4]{MR3796277}, $\bigcap_{k\in K}F^{i_k}_{j_k}$ is a relatively type-definable over a $\C$-small subset of $\C$ (as $\C$ is $\nu$-saturated) equivalence relation on $p(\C')$ with stable quotient.
\end{proof}

Suppose the theorem fails. 
By the claim, there exists a countable type $\pi(x,y,z)$ over $\emptyset$ and a countable tuple $a_0$ in $\C$ such that $\pi(x,y,a_0)$ relatively defines an equivalence relation on $p(\C')$ with $\bigslant{p(\C')}{\pi(\C',\C',a_0)\cap p(\C')^2}$ stable and there is $(a_i)_{i<\nu}\subseteq \C$ such that for all $i<\nu$, $a_i\equiv_{A}a_0$ and $\bigcap_{j<i}\pi(\C',\C',a_j)\cap p(\C')^2\not\subseteq \pi(\C',\C',a_i)$. 
By Corollary \ref{existence of a small model set of parameters}, enlarging $a_0$, we can assume that $a_0$ enumerates an $\aleph_0$-saturated model in $L_A$ of size at most 
$2^{|T| + |A|}$ and $\pi(x,y,a_0)$ relatively defines an equivalence relation on $p\!\upharpoonright_{a_0}\!\!(\C')$; by Lemma \ref{reduction to a small model}, 
this relation also yields a stable quotient 
on $p\!\upharpoonright_{a_0}\!\!(\C')$.  

Let $(b_i,c_i)_{i<\nu}$ be a sequence witnessing that $\bigcap_{j<i}\pi(\C',\C',a_j)\cap p(\C')^2\not\subseteq \pi(\C',\C',a_i)$. 
Let $(a_i',b_i',c_i')_{i< \nu} \subseteq \C'$ be an $A$-indiscernible sequence extracted from  $(a_i,b_i,c_i)_{i<\nu}$. Then, since $p$ is $A$-invariant, we get that for all $i<\nu$
$$(b_i',c_i') \in \left(\bigcap_{j<i}\pi(\C',\C',a'_j)\cap (p\!\upharpoonright_{ a'_{<\nu}}\!\!(\C'))^2\right)\setminus \pi(\C',\C',a'_i).$$
Moreover, since $a_0' \equiv_A a_0$ 
and $p$ is $A$-invariant, we get that $\pi(x,y,a_0')$ relatively defines an equivalence relation on $p\!\upharpoonright_{a_0'}(\C')$, and we also have $a_i' \equiv_A a_0'$ for all $i<\nu$. Therefore, by Lemma \ref{existence of an special index},
there exists some $\beta <\nu$ such that  
$$(*)\;\;\;\;\;\; \bigcap_{\alpha \neq \beta} \pi(\C',\C',a_{\alpha}')\cap(p\!\upharpoonright_{a_{<\nu}'}\!\!(\C'))^2\subseteq\pi(\C',\C',a_{\beta}').$$ 

In the sequence $(a_i',b_i',c_i')_{i< \nu}$, let us insert a sequence $(d'_i,e'_i,f'_i)_{i<\omega}$ from $\C'$ in place of the element
$(a_\beta',b_{\beta}',c_{\beta}')$ so that the resulting sequence is still $A$-indiscernible. Then, since $p$ is $A$-invariant, for all $i<\omega$
$$(**)\;\;\;\;\;\; (e_i',f_i') \in \left(\bigcap_{j< i}\pi(\C',\C',d_j')\cap (p\!\upharpoonright_{a'_{\substack{\alpha<\nu\\\alpha \neq \beta}}, d'_{<\omega}}\!\!(\C'))^2\right) \setminus\pi(\C',\C',d_i').$$
Hence, due to the $A$-indiscernibility of the sequence $(d_i',e_i',f_i')_{i<\omega}$, there exists some formula $\varphi$ implied by $\pi$ such that for all $i<\omega$ we have $(e_i',f_i')\not\in\varphi(\C',\C',d_i')$. 

Moreover, since $d_i' \equiv_A a_0'$ and using the $A$-invariance of $p$, we get that $\pi(x,y,d_i')$ relatively defines an equivalence relation on $p\!\upharpoonright_{d_i'}\!\!(\C')$.

Let us consider the set 
$$X:=p\! \upharpoonright _{a'_{\substack{\alpha<\nu\\\alpha\neq \beta} }}\!\!(\C').$$ 
By the above choices and $A$-invariance of $p$, the type $\bigcup_{\substack{\alpha<\nu\\\alpha\neq \beta} }\pi(x,y,a'_\alpha)$ relatively defines an equivalence relation $E$ on $X$ with stable quotient, and the sequence $(d_i',[e_i']_E,[f_i']_E)$  is indiscernible over $$B:=A\cup \{a'_\alpha: \alpha <\nu; \alpha \neq \beta\}.$$
%
Hence, 
$$\tp\left(\bigslant{(d_j',[e_i']_E,[f_i']_E)}{B}\right)=\tp\left(\bigslant{(d_i',[e_j']_E,[f_j']_E)}{B}\right)$$
for all $j<i$. 

Let $E_i$ be the equivalence relation relatively defined by the partial type $\pi(x,y,d_i')$ on $p \!\upharpoonright _{a'_{\substack{\alpha<\nu\\\alpha\neq \beta} },d_i'}\!\!(\C')$. By $(**)$, $e_i' E_j f_i'$ for all $j<i$. Using this and the previous paragraph, we will deduce that $e_j' E_i f_j'$ for all $j<i$.

Indeed, take any $j<i$. Since $\tp\left(\bigslant{(d_j',[e_i']_E,[f_i']_E)}{B}\right)=\tp\left(\bigslant{(d_i',[e_j']_E,[f_j']_E)}{B}\right)$, there is $\sigma \in \aut(\C'/B)$ such that $\sigma(d_j',[e_i']_E,[f_i']_E)=(d_i',[e_j']_E,[f_j']_E)$. Then, by the $A$-invariance of $p$, $\sigma[E_j]=E_i$. Thus, since $e_i' E_j f_i'$, we conclude that $\sigma(e_i') E_i \sigma(f_i')$.  On the other hand, by $(*)$ and $A$-invariance of $p$, we have $E \upharpoonright_{\dom(E_i)} \subseteq E_i$, which together with the fact that $e_j' E \sigma(e_i')$, $f_j' E \sigma(f_i')$, and $e_j',f_j',\sigma(e_i'),\sigma(f_i') \in \dom(E_i)$ gives us $e_j'E_i\sigma(e_i')$ and $f_j' E_i \sigma(f_i')$. Therefore, $e_j' E_i f_j'$, as required.

We have shown that the sequence $(d_i',e_i',f_i')$ satisfies: 
\begin{align*}
		\pi(e_j',f_j',d_i') &\text{ for all } i\neq j; \hspace{3pt} i,j<\omega;\\
		\neg\varphi(e_i',f_i',d_i') &\text{ for all } i<\omega.
\end{align*}
Take any $a \models p\! \upharpoonright_{d'_{<\omega}}$. Since $d_i' \equiv_A d_0'$  for all $i<\omega$ 
and $p$ is $A$-invariant, we get that $d_i'\equiv_a d_0'$ for all $i<\omega$.
Thus, the sequence $(d_i')_{i<\omega}$ satisfies the assumption of Lemma \ref{IP condition}, and so we get IP, a contradiction.
\end{proof}

We end this section with some comments on whether the large saturation condition in Theorem \ref{Est exists} is necessary or could be eliminated. 


Note that in the above proof, in order to extract indiscernibles from the sequence $(a_i,b_i,c_i)_{i<\lambda}$, we need to know that $\nu$ is at least  $\beth_{(2^{2^{|T| + |A|} +\lvert x \rvert + |T|+|A|})^+} = \beth_{(2^{2^{|T| + |A|} +\lvert x \rvert})^+}$. On the other hand, the proof of the claim requires that any number smaller than $\nu$ is bounded in $\C$. That is why the whole proof requires that $\C$ is at least $\beth_{(2^{2^{|T| + |A|} +\lvert x \rvert})^+}$-saturated. 
In the statement of the theorem, it is enough to assume that $\C$ is $\beth_{(2^{2^{|T| + |A|} +\lvert x \rvert})^+}$-saturated; we used a bigger  degree of saturation, which is notationally more concise.

Although our proof uses essentially the assumption on the degree of saturation, one could still try to transfer the existence of the finest relatively type-definable equivalence relation from big models to their elementary substructures. 

Let $\C\prec\C_1\prec \C'$ be such that $\C_1$ is $\C'$-small and at least as saturated 
as $\C$, and let $p_1(x)\in S(\C_1)$ be the unique $A$-invariant extension of $p(x)$.

While Corollary \ref{corollary: from C to C_1} allows us to transfer the existence of the finest relatively type-definable over a $\C$-small subset of $\C$ equivalence relation on $p(\C')$ with stable quotient to the finest relatively type-definable over a $\C_1$-small subset of $\C_1$ equivalence relation on $p_1(\C')$, in order to eliminate the specific saturation assumption in Theorem \ref{Est exists}, we would need to have a transfer going in the other direction. In Corollary \ref{corollary: from C_1 to C}, we proved such a transfer but only under the additional assumption that the finest relatively type-definable  over a $\C_1$-small subset of $\C_1$ equivalence relation $E$ on $p_1(\C')$ is relatively type-definable over a $\C$-small subset. Therefore, the specific saturation assumption could be eliminated if we could answer positively the following question.

\begin{question}
	In the context of Theorem \ref{Est exists}, is $E^{st}$ always relatively type-definable over $A$?
\end{question}

In the examples studied in the next section, this turns out to be true. Also, in the context of type-definable groups studied in \cite{MR3796277}, $G^{st}$ is type-definable over the parameters over which $G$ is type-definable.

\section{Examples}\label{section examples}
We present two examples where $E^{st}$ is computed explicitly, the second example is based on \cite[Setcion 4]{10.1215/00294527-2022-0023}. 
In fact, in both examples, we give full classifications of all relatively type-definable over $\C$-small subsets of $\C$ equivalence relations on $p(\C')$, for suitably chosen $p \in S(\C)$.

\subsection*{Example 1}
Let our language be $L:=\{R_r(x,y),f_s(x): r\in\mathbb{Q}^+, s\in \mathbb{Q}\}$ and $T$ be the theory of 
$(\mathbb{R},R_r,f_s)_{r\in\mathbb{Q}^+,s\in\mathbb{Q}}$, where $f_s(x):=x+s$ and $R_r(x,y)$ holds if and only if $0\leq y-x\leq r$.

We define the directed distance between two points as a function $$d:\C'\times\C'\to \mathbb{R}\sqcup \mathbb{Q}_+\sqcup\mathbb{Q}_-\sqcup \{\infty\}$$ 
(where $\mathbb{Q}_+$ and $\mathbb{Q}_-$ are disjoint copies of $\mathbb{Q}$ which are disjoint from $\mathbb{R}\sqcup \{\infty\}$) satisfying: 
\begin{align*}
	&d(x,y)=q\in\mathbb{Q} \iff y=f_q(x);\\
	&d(x,y)=r\in \mathbb{R}^{+}\setminus\mathbb{Q} \iff \forall s_1,s_2\in \mathbb{Q}^+ \text{ such that } s_1<r<s_2, \neg R_{s_1}(x,y)\wedge R_{s_2}(x,y);\\
	&d(x,y)=q_+ \in \mathbb{Q}_+ \iff y\ne f_q(x) \text{ is  infinitely close to }f_q(x)\text{ on the right;}\\
	&d(x,y)=q_- \in  \mathbb{Q}_- \iff y \ne f_q(x) \text{ is  infinitely close to }f_q(x)\text{ on the left;}\\
	&d(x,y)=\infty \iff \neg(R_s(x,y)\lor R_s(y,x)) \text{ for all } s\in \mathbb{Q}^+.
\end{align*}
 We complete the definition of $d$ extending it symmetrically in the negative irrational case, i.e $d(y,x):=-d(x,y)$ whenever $d(x,y)\in \mathbb{R}^+ \setminus \mathbb{Q}$. 
This clearly gives us a well defined function $d$.
%
Addition on $\mathbb{R}$ is extended to  $\mathbb{R}\sqcup \mathbb{Q}_+\sqcup\mathbb{Q}_-\sqcup \{\infty\}$ in the natural way, in particular:
 \begin{itemize}
 	\item $q'+q_+:=(q'+q)_+ \text{ and }q'+q_-:=(q'+q)_-$ for any $q,q' \in \mathbb{Q}$;
 	\item $r+\infty:=\infty \text{ for any } r\in \mathbb{R}\cup \mathbb{Q}_+\cup\mathbb{Q}_-\cup \{\infty\}.$
 \end{itemize}
\begin{lema}\label{properties distance}
	Properties of the distance:
	\begin{enumerate}
		\item $d(a,f_q(b))=q+d(a,b)$ and $d(f_q(a),b)=-q+d(a,b)$;
		\item For any distinct real numbers $r_1,r_2$, if $d(a,b)=r_1$ and $d(a,c)=r_2$, then $d(b,c)=r_2-r_1$;
		\item For any irrational $r$, if $d(a,b)=r$ and $d(b,c)=0_{\pm}$, then $d(a,c)=r$;
		\item For any irrational $r$, if $d(a,b)=r=d(a,c)$, then $d(b,c)=0_\pm$.
	\end{enumerate}
\end{lema}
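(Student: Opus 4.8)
The plan is to verify all four items by unwinding the definition of $d$ and running a case analysis on the shape of the values involved (rational, positive or negative irrational, $q_+$, $q_-$, or $\infty$). The one computational input is the translation-equivariance of the relations $R_r$: for every $q\in\mathbb{Q}$ the formulas $R_r(x,f_q(y))$ and $R_r(f_{-q}(x),y)$ express the same thing, namely $0\le (y-x)+q\le r$, and likewise $R_r(f_q(x),y)\leftrightarrow R_r(x,f_{-q}(y))$; moreover $f_{q_1}\circ f_{q_2}=f_{q_1+q_2}$. Everything else is bookkeeping with these facts applied clause by clause in the definition of $d$.

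I would prove (1) first. Put $t:=d(a,b)$ and split on the shape of $t$. If $t=q'\in\mathbb{Q}$ then $b=f_{q'}(a)$, so $f_q(b)=f_{q+q'}(a)$ and $d(a,f_q(b))=q+q'$. If $t$ is a positive irrational, or of the form $q'_\pm$, or $\infty$, then the conditions on the relevant $R_s$'s defining $d(a,f_q(b))=q+t$ are obtained from those defining $d(a,b)=t$ by the equivariance above, after shifting every rational threshold by $q$; a short check in each clause gives $d(a,f_q(b))=q+t$, and the negative-irrational case then follows from the antisymmetry convention $d(y,x)=-d(x,y)$. For the second identity one uses $R_r(f_q(a),b)\leftrightarrow R_r(a,f_{-q}(b))$ to get $d(f_q(a),b)=d(a,f_{-q}(b))$, which equals $-q+d(a,b)$ by the first identity.

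With (1) in hand, (2) reduces to two easy situations plus one that needs care. If $r_1\in\mathbb{Q}$, then $b=f_{r_1}(a)$ and $d(b,c)=d(f_{r_1}(a),c)=-r_1+d(a,c)=r_2-r_1$ by (1); the case $r_2\in\mathbb{Q}$ is symmetric, using also that $d$ is antisymmetric (on the rational part by functoriality of the $f_s$, on the irrational part by convention). In the remaining case $r_1$ and $r_2$ are both irrational; then $d(a,b)=r_1$ and $d(a,c)=r_2$ assert precisely that $b-a$ and $c-a$ satisfy the rational-threshold conditions cutting out the monads of $r_1$ and of $r_2$ inside the galaxy of $a$ (the set of points lying at finite distance from $a$, on which the $R_r$ and the translations $f_s$ induce a linear order), and subtracting these conditions shows that $c-b$ satisfies the conditions cutting out the monad of $r_2-r_1$; when $r_2-r_1$ is irrational this is exactly $d(b,c)=r_2-r_1$. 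Finally, (3) and (4) are the infinitesimal-perturbation counterparts: for (3), $c-b$ infinitesimal together with $b-a$ in the monad of the irrational $r$ keeps $c-a$ in that monad, so $d(a,c)=r$; for (4), $b-a$ and $c-a$ both lying in the monad of the same irrational $r$ force their difference $c-b$ to be infinitesimal, so $d(b,c)\in\{0,0_+,0_-\}$.

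The lengthy but genuinely routine part is carrying out the case checks in every clause of the definition of $d$. The delicate point, which I expect to be the main obstacle, is the bookkeeping: one must track the sign conventions on $\mathbb{R}\sqcup\mathbb{Q}_+\sqcup\mathbb{Q}_-\sqcup\{\infty\}$ (in particular that $d$ is only antisymmetric on its irrational part and that negation interchanges $q_+$ with $(-q)_-$), and keep clear that the irrational and $q_\pm$ values of $d$ record cuts in the local linear order on a galaxy rather than exact differences; within (2), the sub-case where $r_1,r_2$ are irrational with rational difference is the one demanding the most attention.
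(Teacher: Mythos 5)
Your proposal is correct and takes essentially the same approach as the paper: item (1) by unwinding the definition, and items (2)--(4) by manipulating rational thresholds inside a single galaxy, which the paper carries out with explicit choices of $q_1,q_2$ and you phrase in the language of monads. You are right to flag the sub-case of (2) where $r_1,r_2$ are both irrational but $r_2-r_1$ is rational: there the threshold argument, yours and the paper's alike, only pins $d(b,c)$ down to the monad of $q_0:=r_2-r_1$, which does not distinguish $q_0$ from $q_0^+$ or $q_0^-$; in fact the stated conclusion of (2) can fail in this case (with $\epsilon>0$ infinitesimal, taking $b$ at signed distance $r_1+\epsilon$ from $a$ and $c$ at signed distance $r_1+q_0+2\epsilon$ from $a$ yields $d(a,b)=r_1$, $d(a,c)=r_2$, but $d(b,c)=q_0^+\ne q_0$). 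The paper's proof is silent on this point and has the same limitation; it causes no harm downstream, since the lemma is only invoked in the quantifier-elimination argument in situations where the relevant differences are irrational, but strictly the hypothesis of (2) should stipulate that $r_2-r_1$ is irrational (or the conclusion should be weakened to say $d(b,c)$ lies in the monad of $r_2-r_1$).
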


\begin{proof}
	$(1)$ follows from the definition of the distance.
	
	$(2)$ 
Since the rational case is covered in $(1)$, we can assume that $r_1,r_2$ are irrationals. Consider the case $0<r_1<r_2$; other cases are similar. Let $q$ be any rational bigger than $r_2-r_1$.
	 We can write $q$ as $q_2-q_1$, where $q_1,q_2$ are rationals such that $q_1<r_1<r_2<q_2$. 
Since  $R_{q'}(a,b)$ and $\neg R_{q'}(a,c)$ hold for some $q'\in \mathbb{Q}^+$ (for any $r_1<q'<r_2$) and $R_{q_1}(a,b)$ does not hold, $ R_{q_2-q_1}(b,c)$ has to hold; otherwise $R_{q_2}(a,c)$ would not hold, contradicting $d(a,c)=r_2$. Hence, $d(b,c)\leq q$

	 Let now $q$ be any positive rational smaller than $r_2-r_1$. 
	 We can write $q$ as $q_2-q_1$, where $q_1,q_2$ are rationals such that $r_1<q_1<q_2<r_2$. Since $R_{q_1}(a,b)$ holds, $R_{q_2-q_1}(b,c)$ cannot hold; otherwise, $R_{q_2}(a,c)$ would hold, contradicting $d(a,c)=r_2$. Hence, $d(b,c)\geq q$.

$(3)$ Consider the case $r>0$ and $d(b,c)=0_+$; the other cases are analogous.  Let $q$ be any rational bigger than $r$.
	 We can write $q$ as $q_1+q_2$, where  $q_1,q_2$ are rationals, $q_1>r$, and $q_2>0$. Then, $R_{q_1}(a,b)$ and $R_{q_2}(b,c)$ hold, hence so does $R_{q_1+q_2}(a,c)$. 
	 This implies that $d(a,c)\leq q$. Let now $q$ be any positive rational smaller than $r$. Then, $R_q(a,c)$ cannot hold; otherwise it would imply $R_q(a,b)$, a contradiction.

	$(4)$ 
	Consider the case $r>0$; the other case is similar. Consider any rationals $q_1$, $q_2$ satisfying $0<q_1<r<q_2$. Then, $R_{q_2-q_1}(f_{q_1}(a),b)\wedge R_{q_2-q_1}(f_{q_1}(a),c)$ holds, which imply $R_{q_2-q_1}(b,c)\vee  R_{q_2-q_1}(c,b)$. Since $q_2$ and $q_1$ were arbitrary, this means that $b$ and $c$ are infinitesimally close.
\end{proof}

It is clear that the distance determines the quantifier-free type of a pair $(a,b)$. Since our language only contains unary and binary symbols, 
the collection of distances between the elements of a given $n$-tuple determines its quantifier-free type.

\begin{prop}
	The theory $T$ has NIP and quantifier elimination.
\end{prop}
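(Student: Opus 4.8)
The plan is to treat the two assertions by essentially independent arguments, obtaining NIP cheaply from a reduct observation and putting the real effort into quantifier elimination.

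\textbf{NIP.} The structure $(\mathbb{R},R_r,f_s)_{r\in\mathbb{Q}^+,s\in\mathbb{Q}}$ is the definable $L$-reduct of $(\mathbb{R},<,+,\cdot,(q)_{q\in\mathbb{Q}})$: the relation $R_r(x,y)$ is defined there by $x\le y\le x+r$ and the function $f_s$ by $x\mapsto x+s$, using the named rational constant $s$. That structure is a real closed field expanded by countably many constants, hence o-minimal, hence NIP, and a reduct of an NIP structure has NIP theory (the existence of a $\varphi$-shattered set of a given finite size being a single first-order sentence, transferable through the complete theory $T$). Thus $T$ has NIP. Alternatively, one can read NIP off the quantifier elimination below: modulo QE every formula is a Boolean combination of atomic formulas $R_r(f_s(x_i),f_t(x_j))$ and equalities of such terms, and as a function of one variable, with the others fixed, each of these defines a convex subset of an archimedean class (or its complement), and uniformly definable families of such sets have finite VC dimension.

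\textbf{Quantifier elimination.} I would use the standard reduction to primitive existential formulas: it suffices to show that for every conjunction of literals $\psi(x,\bar y)$ the formula $\exists x\,\psi(x,\bar y)$ is $T$-equivalent to a quantifier-free formula $\chi_\psi(\bar y)$, and since $T=\Th(\mathbb{R},R_r,f_s)$ is complete, it is enough to find $\chi_\psi$ with $(\mathbb{R},R_r,f_s)\models\forall\bar y\,[\exists x\,\psi(x,\bar y)\leftrightarrow\chi_\psi(\bar y)]$. Now $\psi$ mentions only finitely many terms, each of the form $f_s(x)$ or $f_s(y_i)$, and by the remark preceding the proposition its content is encoded by the directed distances involved; since $d(f_s(x),f_t(y_i))$ differs from $d(x,y_i)$ by a fixed rational, $\psi$ is, up to $T$-equivalence, a finite system of constraints confining each $d(x,y_i)$ (for finitely many $i$) to a window with rational endpoints. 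By Lemma \ref{properties distance}(1)--(4), once $d(x,y_1)$ is fixed, the value $d(x,y_i)$ is forced for every $y_i$ in the same maximal cluster of parameters (those pairwise at finite distance), while the requirements coming from different clusters are independent of one another. I would therefore let $\chi_\psi(\bar y)$ be the quantifier-free formula (involving only the finitely many rationals appearing in $\psi$) asserting that, cluster by cluster, the imposed windows are nonempty and jointly compatible with the relevant distances $d(y_i,y_j)$, the compatibility conditions being exactly those of Lemma \ref{properties distance}. The implication $\exists x\,\psi\Rightarrow\chi_\psi$ is immediate.

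\textbf{The main obstacle} is the converse, $\chi_\psi(\bar c)\Rightarrow\exists x\,\psi(x,\bar c)$ inside the standard model: given $\bar c\in\mathbb{R}$, where all parameters automatically lie in one cluster, and a compatible family of rational windows for the $d(x,c_i)$, one must exhibit a witness $x\in\mathbb{R}$. This is elementary --- take a suitable $c_i$, or $c_i+q$, or a point sufficiently far out, and use that $(\mathbb{R},+)$ is densely ordered and unbounded, so that every nonempty rational window about a prescribed point is realized --- but some care is required in the bookkeeping: one has to note that a finite conjunction of literals can impose only finitely many ``coarse'' constraints, so that the infinitesimal values $q_\pm$ and the value $\infty$, which never occur between elements of the standard model, are never actually forced by $\psi$, and that the compatibility condition $\chi_\psi$ genuinely does yield a common solution in $\mathbb{R}$. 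That verification is exactly where Lemma \ref{properties distance} is used.
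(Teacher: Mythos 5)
Your NIP argument is fine and is exactly the paper's (reduct of an o-minimal structure). For quantifier elimination you take a genuinely different route: the paper runs a back-and-forth between $\aleph_0$-saturated models, using the extended distance function (including the infinitesimal values $q_\pm$ and $\infty$) to match a new element $a_{n+1}$, whereas you propose to eliminate a single existential quantifier syntactically and verify the resulting equivalence in the standard model $(\mathbb{R},R_r,f_s)$, letting completeness of $T$ do the rest. That reduction is legitimate, and it has the pleasant feature that all the nonstandard distance values disappear from the verification.

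The gap is that the heart of your argument --- the actual construction of the quantifier-free formula $\chi_\psi(\bar y)$ --- is never carried out, and it is precisely there that the difficulty of this example lives. You describe $\chi_\psi$ as ``asserting that the imposed windows are nonempty and jointly compatible with the relevant distances $d(y_i,y_j)$,'' but $d$ is not part of the language and compatibility is not obviously an $L$-condition: $L$ has no order symbol, and the only atomic constraints available on a pair of parameters are the two-sided bounds $0\le f_t(y_j)-f_s(y_i)\le r$ (and equalities). A one-sided condition such as $y_j-y_i>q$ or $y_i\le y_j$ is \emph{not} a quantifier-free $L$-formula. When you actually eliminate $x$ from a conjunction of interval constraints (Fourier--Motzkin style), the raw solvability condition is a Boolean combination of exactly such one-sided inequalities between the parameters, and one must check that these always recombine into $L$-expressible two-sided windows. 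They do --- the point is that each positive literal $R_r(f_s(x),f_t(y_i))$ confines $x$ to a bounded window around $y_i$, while a conjunction of negative literals is satisfied by any $x$ far from all parameters, so the solvability condition only ever needs to compare parameters that are forced to lie within an explicit rational distance of one another --- but this is the real content of quantifier elimination for this language, it is where Lemma \ref{properties distance} must actually be deployed, and it is absent from your write-up. By contrast, the step you flag as ``the main obstacle'' (producing a witness $x\in\mathbb{R}$ once $\chi_\psi$ holds) is routine interval arithmetic. As it stands the proposal is a plausible plan rather than a proof; to complete it you must exhibit $\chi_\psi$ as an honest quantifier-free $L$-formula and justify the expressibility claim above.
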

\begin{proof}
	$T$ has NIP, because it is a reduct of an o-minimal theory.
	
	We prove quantifier elimination using a back and forth argument. Let $\mathcal{M}$ and $\mathcal{N}$ be two $\aleph_0$-saturated models of $T$ and let $(a_1,\dots,a_n)$ and $(b_1,\dots,b_n)$ be tuples of elements of $\mathcal{M}$ and $\mathcal{N}$, respectively, satisfying the same quantifier free type. Choose a new element $a_{n+1}\in \mathcal{M}$. There are three cases:
	\begin{enumerate}
		\item $a_{n+1}$ is infinitely far from $a_1,\dots,a_n$;
		\item $a_{n+1}=f_q(a_i)$ for some $q\in \mathbb{Q}$ and $i=1,\dots,n$;
		\item $a_{n+1}$ is related (i.e., at finite distance) to some of the $a_i$'s but is not equal to $f_q(a_i)$ for any $q\in \mathbb{Q}$ and $i=1,\dots,n$.
	\end{enumerate}

In the first two cases, by $\aleph_0$-saturation, we can clearly choose $b_{n+1}\in\mathcal{N}$ such that $(a_1,\dots,a_{n+1})$ and $(b_1,\dots,b_{n+1})$ have the same quantifier-free type. Now, let us tackle the third case. 

In the third case, by removing the elements of the sequence $(a_1,\dots,a_n)$ which are at infinite distance from $a_{n+1}$ as well as the corresponding elements of the sequence $(b_1,\dots,b_n)$, we may assume that no $a_i$ is infinitely far from $a_{n+1}$. Note also that for each $i<n$ there is at most one $q_i\in \mathbb{Q}$ such that $f_{q_i}(a_i)$ is infinitesimally close to $a_{n+1}$. 
Let $A$ be the set of all such $f_{q_i}(a_i)$'s. 


First, consider the case when $A \ne \emptyset$. Then $A$ is a finite set totally ordered  by the relation $R_1(x,y)$ and all elements in $A$ are infinitesimally close to each other and to $a_{n+1}$. Let $B:=\{f_{q_i}(b_i) : f_{q_i}(a_i)\in A \}$. Note that all the elements in $B$ are infinitesimally close to each other and that the map sending $f_{q_i}(a_i)$ to $f_{q_i}(b_i)$ is an $R_1$-order isomorphism.
Then, by density, there exists $b_{n+1}$ with the same $R_1$-relative position  to the elements in $B$ as $ a_{n+1}$ to the corresponding elements in $A$. 
Hence, $d(b_{n+1},f_{q_i}(b_i))=d(a_{n+1},f_{q_i}(a_i))$ for each $f_{q_i}(a_i) \in A$, and, by Lemma \ref{properties distance}, this implies
$$\qftp(b_1,\dots,b_n,b_{n+1})=\qftp(a_1,\dots,a_n, a_{n+1}).$$ 

In the case when $A = \emptyset$, $d(a_i,a_{n+1})$ is irrational for every $i\leq n$. Pick $b_{n+1}$ so that $d(b_1,b_{n+1})=d(a_1,a_{n+1})$. Since $A=\emptyset$, by Lemma \ref{properties distance}(4), we get that $d(a_1,a_{n+1})\ne d(a_1,a_i)$ for all $1< i \leq n$. 
Hence, Lemma  \ref{properties distance} implies that\\
\begin{minipage}[t]{0.15\linewidth}
\hfill
\end{minipage}\hfill
\begin{minipage}[t]{0.7\linewidth}
	\begin{center}
		\vspace{1pt}
			$\qftp(b_1,\dots,b_n,b_{n+1})=\qftp(a_1,\dots,a_n, a_{n+1}).$
	\end{center}
\end{minipage}\hfill
\begin{minipage}[t]{0.15\linewidth}
			\vspace{1pt}
	\hfill\qedhere
\end{minipage}
\end{proof}

Let $p\in S_x(\C)$ be the 
$0$-invariant complete global type determined by $$ \bigwedge_{c\in\C}\bigwedge_{n\in\omega}\neg R_n(x,c)\wedge \neg R_n(c,x).$$
We denote by $E(x,y)$ the equivalence relation on $\C'$ defined by $$\bigwedge_{r\in\mathbb{Q}^+}R_r(x,y)\vee R_r(y,x)$$
and by $E\!\upharpoonright_{p}$ the equivalence relation on $p(\C')$ relatively defined by the same partial type.

\begin{lema}
	The hyperdefinable set $\C'/E(\C',\C')$ is stable.
\end{lema}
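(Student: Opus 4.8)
The plan is to invoke that $T$ has NIP---which holds since $T$ is a reduct of an o-minimal theory---together with the characterization of stability of hyperdefinable sets in NIP theories recorded as \cite[Theorem 2.10]{10.1215/00294527-2022-0023} and already used above in the proof of Proposition \ref{weak stability iff stability}: under NIP, a hyperdefinable set $X/E$ is stable if and only if every indiscernible sequence of elements of $X/E$ is totally indiscernible. Hence it is enough to show that every indiscernible sequence $([a_i]_E)_{i<\omega}$ of $E$-classes is totally indiscernible.

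So I would fix such a sequence. By indiscernibility, either $[a_i]_E=[a_j]_E$ for all $i<j$, in which case the sequence is constant and there is nothing to prove, or $[a_i]_E\ne[a_j]_E$ for all $i<j$. In the second case, choose representatives $a_i\in[a_i]_E$; then $d(a_i,a_j)=\infty$ for all $i\ne j$. The key point is that this already determines the quantifier-free type of every finite subtuple $(a_{i_1},\dots,a_{i_n})$: by the observations preceding the quantifier-elimination proposition, a quantifier-free type in this language is determined by the collection of pairwise distances of the entries of the tuple, and here every such distance equals $\infty$ (passing through the functions $f_s$ and the relations $R_r$---see Lemma \ref{properties distance}(1)---cannot turn an infinite distance into a finite one). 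This common quantifier-free type is visibly invariant under permuting the entries, so by quantifier elimination $\tp(a_{i_1},\dots,a_{i_n})$ is also permutation-invariant. Since an automorphism of $\C'$ realizing an equality of types of tuples of representatives also realizes the corresponding equality of types of $E$-classes, we conclude $\tp([a_{i_1}]_E,\dots,[a_{i_n}]_E)=\tp([a_{\sigma(i_1)}]_E,\dots,[a_{\sigma(i_n)}]_E)$ for every permutation $\sigma$; as $n$ and the indices were arbitrary, $([a_i]_E)_{i<\omega}$ is totally indiscernible.

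I do not expect a serious obstacle; the only steps needing care are (i) verifying that ``all pairwise distances equal $\infty$'' is genuinely preserved by the term and relation symbols of $L$, so that it pins down the full quantifier-free type---this is exactly where the form of the language (only unary and binary symbols) and Lemma \ref{properties distance}(1) are used---and (ii) the harmless passage from types of representatives to types of the hyperimaginaries $[a_i]_E$, which is immediate since $\tp(\bar a)=\tp(\bar a')$ provides an automorphism taking $\bar a$ to $\bar a'$ and hence $[\bar a]_E$ to $[\bar a']_E$. Conceptually, the same computation shows that over any small parameter set the structure induced on $\C'/E$ is that of a pure infinite set together with boundedly many constants, which makes stability evident; but the indiscernibility argument above is the shortest route to the conclusion using only the machinery already in place.
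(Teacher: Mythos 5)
Your route is genuinely different from the paper's: you use the characterization from \cite[Theorem 2.10]{10.1215/00294527-2022-0023} via total indiscernibility of indiscernible sequences, whereas the paper uses the type-counting characterization from the same theorem, showing $|S_{\C'/E}(A)| \leq \mathfrak{c}$ for all $A$ of size at most $\mathfrak{c}$ by observing that a class $[c]_E$ is determined by a single finite distance $d(c,a)$ to some $a \in A$ (and that ``all distances to $A$ are $\infty$'' pins down a unique type). Both criteria are available, and your indiscernibility argument is a perfectly reasonable alternative; the paper's counting argument is arguably shorter and also makes visible the ``pure set plus boundedly many constants'' picture you allude to at the end.

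There is, however, one concrete gap: the step ``the classes $[a_i]_E$ are pairwise distinct, hence $d(a_i,a_j)=\infty$ for all $i \ne j$'' is false as stated. Distinctness of classes only says $a_j - a_i$ is not infinitesimal; for instance $a$ and $a+\pi$ lie in distinct $E$-classes but $d(a,a+\pi)=\pi\ne\infty$. To rescue the step you must actually use the indiscernibility of the sequence of classes, which you never invoke here. Note that the ``distance between classes'' (an element of $\mathbb{R}\cup\{\infty\}$, identifying $q$ with $q_\pm$) is well defined and automorphism-invariant by Lemma \ref{properties distance}(3), hence is determined by $\tp([a_i]_E,[a_j]_E)$ and so is the same value $r$ for all pairs $i<j$. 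If $r$ were a finite value, Lemma \ref{properties distance} yields a contradiction with distinctness: for $r$ irrational, $d(a_0,a_1)=d(a_0,a_2)=r$ forces $d(a_1,a_2)=0_\pm$ by part (4), i.e.\ $[a_1]_E=[a_2]_E$; for $r$ rational, both $a_1$ and $a_2$ are infinitesimally close to $f_r(a_0)$, again giving $[a_1]_E=[a_2]_E$. Only after this does one get that all pairwise distances are $\infty$, at which point the rest of your argument (the symmetric quantifier-free type, q.e., and the passage from representatives to classes via automorphisms) is fine.
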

\begin{proof}
	 By \cite[Theorem 2.10]{10.1215/00294527-2022-0023}, it is enough to prove that for any $A \subseteq \C'$ with $|A| \leq \mathfrak{c}$ we have $|S_{\C'/E}(A)| \leq \mathfrak{c}$. 

 Clearly, the elements $c$ and $c'$ are in the same $E$-class if and only if $c=c'$ or $d(c,c')=0_\pm$. 
 %
Note that whenever $d(c,a) =d(c',a)\neq \infty$, then $cEc'$. Therefore, specifying the distance $d(c,a) \ne \infty$ from $c$ to a given element $a \in A$ determines the class $[c]_E$. On the other hand, by q.e., the condition saying that $d(c,a)=\infty$ for all $a \in A$ determines $\tp(c/A)$. Therefore, $|S_{\C'/E}(A)| \leq \mathfrak{c}\times\mathfrak{c} +1= \mathfrak{c}$.
\end{proof}

\begin{prop}
	The only equivalence relations on $p(\C')$ relatively type-definable over a $\C$-small subset of $\C$ are equality, $E\!\upharpoonright_p$, and the total equivalence relation. 
\end{prop}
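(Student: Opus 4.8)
The plan is to avoid the automorphism-group machinery of Section \ref{section: main theorem} altogether and argue directly from quantifier elimination. The decisive observation is that, since $p$ says ``infinitely far from everything in $\C$'' and the language has only unary and binary symbols, for $a,b\models p$ and any $\C$-small $B\subseteq\C$ the type $\tp(a,b/B)$ is completely determined by the single directed distance $d(a,b)\in V:=\R\sqcup\mathbb{Q}_+\sqcup\mathbb{Q}_-\sqcup\{\infty\}$: by q.e.\ the type is determined by the pairwise distances inside $\{a,b\}\cup B$, and all distances from $a$ or $b$ to $B$ equal $\infty$ while the distances inside $B$ are fixed. Hence, by strong homogeneity of $\C'$, any equivalence relation $F$ on $p(\C')$ relatively type-definable over a $\C$-small $B\subseteq\C$ corresponds to a subset $S\subseteq V$ via $a\mathrel{F}b\iff d(a,b)\in S$ (for $a,b\models p$), where reflexivity gives $0\in S$ and symmetry gives $S=-S$ (with $-\infty:=\infty$ and $-q_\pm:=(-q)_\mp$). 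Here one also uses that every finite $v\in V$ is the distance of some pair from $p(\C')$ (a point at finite distance from a realization of $p$ still realizes $p$, by the triangle inequality for the $R_r$'s) and that $\infty$ is too (realize over $\C a$ the type saying $x$ is infinitely far from everything there). So the task reduces to classifying the admissible $S$.

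I would then read off the constraints on $S$. First, transitivity, together with Lemma \ref{properties distance} and the fact (by saturation of $\C'$) that the relevant triples $a,b,c\models p$ all exist, forces $S$ to be closed under the addition of distance values --- where this ``addition'' is ordinary addition when at least one summand lies in $\R$, and $q_{1,\varepsilon_1}\oplus q_{2,\varepsilon_2}$ equals $(q_1+q_2)_{\varepsilon_1}$ when $\varepsilon_1=\varepsilon_2$ but is the multivalued $\{(q_1+q_2)_+,(q_1+q_2)_-,q_1+q_2\}$ when $\varepsilon_1\ne\varepsilon_2$. In particular, if $S$ contains a nonzero element of $\R$ or some $q_\pm$ with $q\ne0$, then it contains all its integer multiples ($nr$, resp.\ $(nq)_\pm$), hence is unbounded in the finite values. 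Second, if $\infty\in S$ then $F$ is the total relation: given $a,b\models p$, pick $c\models p$ with $d(a,c)=d(c,b)=\infty$, so $a\mathrel{F}c\mathrel{F}b$. Third --- and this is where type-definability is genuinely used --- $S$ cannot be unbounded in the finite values while omitting $\infty$: if $\pi(x,y)$ relatively defines $F$, then the partial type $p(x)\cup p(y)\cup\pi(x,y)\cup\{\neg R_n(x,y)\wedge\neg R_n(y,x):n<\omega\}$ is finitely satisfiable (any finite fragment mentions only finitely many $R_{n_i}$, and a pair in $F$ at finite distance of absolute value exceeding $\max_in_i$ satisfies it), has $\C'$-small size, hence is realized, producing a pair in $F$ at distance $\infty$. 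Combining the three: either $\infty\in S$ and $F$ is the total relation, or $S$ is bounded; in the latter case $S$ has no nonzero ``non-infinitesimal'' element, so $S\subseteq\{0,0_+,0_-\}$, and being $0$-containing, symmetric and $\oplus$-closed it is either $\{0\}$ (i.e.\ $F$ is equality) or $\{0,0_+,0_-\}$ (i.e.\ $F=E\!\upharpoonright_p$, by the description of $E$ established above, namely $a\mathrel{E}b\iff a=b$ or $d(a,b)=0_\pm$).

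I expect the main technical nuisance to be the transitivity-to-closure step: writing out precisely how $a\mathrel{F}b\mathrel{F}c$ constrains $d(a,c)$, with careful attention to the infinitesimal values $0_+,0_-$ (these are exactly what makes $E\!\upharpoonright_p$ an honest third relation strictly between equality and the total relation, and the mixed case $q_{1,+}\oplus q_{2,-}$ is where the multivaluedness must be handled correctly), and checking that every configuration of three realizations of $p$ that we invoke is actually realized in $\C'$. By contrast, the use of type-definability is a one-line compactness argument and the final case-bookkeeping is routine. I would also note at the outset that the three relations really are relatively type-definable (over $\emptyset$, by $x=y$, by $\bigwedge_{r\in\mathbb{Q}^+}(R_r(x,y)\vee R_r(y,x))$, and by $x=x$) and pairwise distinct on $p(\C')$, so that the statement is a genuine classification.
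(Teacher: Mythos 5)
Your proposal is correct and is essentially the paper's argument in a more systematic packaging: the paper likewise splits on whether $F$ contains a pair at infinite distance (giving the total relation by q.e.\ and $\C$-invariance), and otherwise obtains unbounded finite distances by iterating an automorphism $\sigma\in\auto(\C'/\C)$ with $\sigma(a)=b$ and then applies compactness to reach distance $\infty$, before pinning down equality versus $E\!\upharpoonright_p$ by homogeneity. Your only real variation is deriving the ``integer multiples'' step from transitivity of $F$ plus the additivity of $d$ (Lemma \ref{properties distance}) rather than from conjugation, and making explicit the reduction of $F$ to a subset $S$ of the value set; both are sound and amount to the same mechanism.
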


\begin{proof}
		Let $F(x,y)$ be any equivalence relation on $p(\C')$ relatively type-definable over a $\C$-small subset of $\C$. Let $S_n(x,y):=R_n(x,y)\vee R_n(y,x)$. There are two cases.
		
		Case 1: There are $ a,b\in p(\C')$ such that  $aFb$ and $\models\bigwedge_{n\in\mathbb{N}}\neg S_n(a,b)$. For any $c,d\in p(\C')$ we can find $e\in p(\C')$ such that $\models\bigwedge_{n\in\mathbb{N}}\neg S_n(c,e) \wedge \bigwedge_{n\in\mathbb{N}}\neg S_n(d,e)$. 
		Hence, by q.e., $$(d,e)\equiv_{\C}(a,b)\equiv_{\C}(c,e).$$ As $F$ is $\C$-invariant, we conclude that $cFd$. This implies that $F$ is the total relation.
		
		Case 2: For any $ a,b\in p(\C')$ with $aFb$ there exists $n\in \mathbb{N}$ such that $\models S_n(a,b)$. 
		
		First, we show that $aFb$ implies $aE\!\upharpoonright_pb$. Assume that it is not the case. Then there exists $m\in \mathbb{Q}^+$ such that $aFb$ and $\neg S_m(a,b)$. 
On the other hand, $S_n(a,b)$ for some $n \in \mathbb{N}$.
Since $a \equiv_\C b$, there is $\sigma\in\auto(\C'/\C)$ satisfying $\sigma(a)=b$. 
Let $b_i:=\sigma^i(a)$ for $i<\omega$.
Clearly, $$(a,b)\equiv_{\C}(b,b_2)\equiv_{\C}(b_2,b_3)\equiv_{\C}\cdots . $$ 
		We deduce that for all $k\in \mathbb{N}$, $aFb_k$ and $\models \neg S_{km}(a,b_k)$. Hence, by compactness, there exists $b'\in p(\C')$ such that $aFb'$ and $\models \neg S_n(a,b')$ for all $n\in \mathbb{N}$, contradicting the hypothesis of the second case.

		Finally, if $F$ is not equality, there exist elements $a\neq b \in p(\C')$ such that $aFb$, and so $aE\!\upharpoonright_pb$ by the last paragraph. Take any distinct $c,d\in p(\C')$ satisfying $cE\!\upharpoonright_{p}d$. Then, by q.e., either $(a,b)\equiv_{\C}(c,d)$ or $(a,b)\equiv_{\C}(d,c)$. Both cases imply $cFd$, which means that $F$ and $E\!\upharpoonright_p$ are the same equivalence relation.
\end{proof}
Since $p(\C')$ is not stable, we obtain the following:
\begin{cor}
		The equivalence relation $E\!\upharpoonright_p$ is the finest equivalence relation on $p(\C')$ relatively type-definable over a $\C$-small set of parameters from $\C$ and  with stable quotient, 
that is $E^{st} = E\!\upharpoonright_p$.
\end{cor}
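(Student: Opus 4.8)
The plan is to derive the corollary immediately from the preceding classification Proposition together with the Lemma on stability of $\C'/E$. First I would observe that $E\!\upharpoonright_p$ qualifies as one of the equivalence relations in the scope of the corollary: it is relatively type-definable over $\emptyset$ (hence over a $\C$-small set), being relatively defined by $\bigwedge_{r\in\mathbb{Q}^+}R_r(x,y)\vee R_r(y,x)$. So by the Proposition the only competitors to consider are equality, $E\!\upharpoonright_p$, and the total relation, and it suffices to locate, among those, the finest one with stable quotient.

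Next I would dispose of the two extreme cases. The total relation has a one-point quotient, hence stable, but it is strictly coarser than $E\!\upharpoonright_p$, since $p(\C')$ is not a single $E$-class: a galaxy infinitely far from $\C$ has many points, and any two $E$-inequivalent elements of it are at distance $\infty$. The quotient $p(\C')/E\!\upharpoonright_p$ is stable because $\C'/E$ is stable by the preceding Lemma and stability of hyperdefinable sets is inherited by relatively type-definable subsets (\cite[Remark 1.4]{MR3796277}), and $p(\C')/E\!\upharpoonright_p$ is such a subset of $\C'/E$. Finally, the quotient by equality is $p(\C')$ itself, and here I would argue instability directly: fixing a galaxy $G$ infinitely far from $\C$, every element of $G$ realizes $p$, while $G$ carries (via $R_1$ and the functions $f_s$) a copy of an unstable ordered structure; choosing a $\C$-indiscernible sequence $(a_i,b_i)_{i<\omega}$ inside $G$ witnessing the order property and invoking quantifier elimination (over $\C$ only the $d$-values to $\C$, all equal to $\infty$, together with the internal $d$-structure count) yields $\tp(a_i,b_j/\C)\neq\tp(a_j,b_i/\C)$ for $i<j$, so $p(\C')$ is not stable.

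Combining the three observations, $E\!\upharpoonright_p$ is the finest relatively type-definable (over a $\C$-small set of parameters from $\C$) equivalence relation on $p(\C')$ with stable quotient, i.e., $E^{st}$ exists and equals $E\!\upharpoonright_p$. The only step with any real content beyond bookkeeping on top of the two preceding results is the verification that $p(\C')$ is unstable, so that is the point I would expect to need the most care.
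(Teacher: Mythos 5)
Your proposal is correct and follows essentially the same route as the paper: the classification proposition reduces the problem to three candidates, the preceding lemma together with closure of stability under type-definable subsets handles $E\!\upharpoonright_p$, and instability of $p(\mathfrak{C}')$ rules out equality. The paper leaves the instability of $p(\mathfrak{C}')$ as an unproved one-line assertion, so your sketch of a witnessing $\mathfrak{C}$-indiscernible sequence inside a far galaxy (e.g.\ an increasing sequence of pairwise infinitesimally close realizations, ordered by $R_1$) is a welcome, correct filling-in of that detail rather than a deviation.
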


\subsection*{Example 2}
This example is based on \cite[Section 4]{10.1215/00294527-2022-0023}. We work in the language $L:=\{+,-,1,R_r(x,y): r\in\mathbb{Q}^+\}$ and our theory $T$ is $\Th((\mathbb{R},+,-,1,R_r(x,y))_{r\in\mathbb{Q}^+})$, where $\mathbb{R}\models R_r(x,y)$ if and only if $0\leq y-x\leq r$. 

The next result was proven in  \cite[Proposition 4.1, Proposition 4.8]{10.1215/00294527-2022-0023}.
\begin{fact}
	The theory $T$ has $NIP$ and quantifier elimination.
\end{fact}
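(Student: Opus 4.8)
The NIP part is immediate and uses nothing special: for $r=m/n\in\mathbb{Q}^{+}$ the relation $R_{r}(x,y)$ is quantifier-free definable in $(\mathbb{R},+,-,1,\le)$ by $0\le n(y-x)\wedge n(y-x)\le m\cdot 1$, so $(\mathbb{R},+,-,1,R_{r})_{r\in\mathbb{Q}^{+}}$ is a reduct of the o-minimal structure $(\mathbb{R},+,-,1,\le)$; since reducts of NIP structures are NIP, $T$ has NIP.

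For quantifier elimination the plan is to run a back-and-forth argument between two $\aleph_{0}$-saturated models $\mathcal{M},\mathcal{N}\models T$, mirroring the proof given for Example~1 but carrying the group structure along. Two preliminary reductions are needed. First, every model of $T$ is a divisible torsion-free abelian group, i.e.\ a $\mathbb{Q}$-vector space (divisibility is expressed by first-order sentences true in $\mathbb{R}$), so rational scalar multiples such as $q\cdot 1$ make sense. Second, modulo $+,-,1$ every atomic formula in variables $x_{1},\dots,x_{n}$ is equivalent either to a $\mathbb{Z}$-linear equation $\sum c_{i}x_{i}=c_{0}\cdot 1$ or to a formula $R_{r}(0,t)$ with $t$ a $\mathbb{Z}$-linear term in the $x_{i}$'s and $1$, because $R_{r}(u,v)\leftrightarrow R_{r}(0,v-u)$. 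Consequently the quantifier-free type of a tuple $(a_{1},\dots,a_{n})$ is captured by (a) the group of $\mathbb{Z}$-linear relations it satisfies over $\langle 1\rangle$, and (b) the directed distances $d(0,t)\in\mathbb{R}\sqcup\mathbb{Q}_{+}\sqcup\mathbb{Q}_{-}\sqcup\{\infty\}$, defined exactly as for Example~1, for $t$ ranging over $\mathbb{Z}$-linear combinations of the $a_{i}$'s and $1$; moreover it is enough to record (b) on a $\mathbb{Q}$-basis, because the analogue of Lemma~\ref{properties distance} yields $d(0,-t)=-d(0,t)$ and expresses $d(0,t_{1}+t_{2})$ in terms of $d(0,t_{1})$ and $d(0,t_{2})$ whenever the latter are finite.

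The inductive step then splits into cases, just as in Example~1. Given $\bar{a}$ in $\mathcal{M}$, $\bar{b}$ in $\mathcal{N}$ with $\qftp(\bar{a})=\qftp(\bar{b})$ and a fresh $a_{n+1}\in\mathcal{M}$: if $m\,a_{n+1}\in\langle\bar{a},1\rangle_{\mathbb{Z}}$ for some $m\ge 1$, the quantifier-free type of $a_{n+1}$ over $\bar{a}$ is already forced and the matching $b_{n+1}$ is read off from $\bar{b}$ using divisibility of $\mathcal{N}$. Otherwise $a_{n+1}$ is $\mathbb{Z}$-independent from $\bar{a},1$, and one examines the elements $v$ of the $\mathbb{Q}$-span of $\bar{a},1$ for which $d(0,a_{n+1}-v)$ is finite. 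If there are none, $\aleph_{0}$-saturation of $\mathcal{N}$ provides a $b_{n+1}$ at infinite distance from that whole span. If there are such $v$'s but none with $d(0,a_{n+1}-v)=0_{\pm}$, then, by the Lemma~\ref{properties distance} analogue, prescribing $d(0,b_{n+1}-v_{0})=d(0,a_{n+1}-v_{0})$ for a single witness $v_{0}$ already determines the whole quantifier-free type of $(\bar{b},b_{n+1})$, and such a $b_{n+1}$ exists in $\mathcal{N}$ because the required type over $\bar{b}$ is countable and finitely satisfiable. Finally, if some $d(0,a_{n+1}-v)=0_{\pm}$, the relevant span-elements fall into finitely many infinitesimal clusters linearly ordered by $R_{1}$, and — exactly as in Example~1, using density inside the corresponding cluster of $\mathcal{N}$ — one places $b_{n+1}$ in the matching $R_{1}$-position, whence $\qftp(\bar{b},b_{n+1})=\qftp(\bar{a},a_{n+1})$.

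The step I expect to require real care — the genuinely new feature relative to Example~1 — is item (b): controlling $R_{r}$ simultaneously on \emph{all} $\mathbb{Z}$-linear combinations of the tuple, i.e.\ verifying that the finite package ``a $\mathbb{Q}$-basis, plus one distance per coset lying at infinite distance'' really pins down the quantifier-free type and is always realizable. The additivity and cancellation identities for $d$ — the analogue of Lemma~\ref{properties distance} — are exactly what is needed to make this work, and proving them in the group setting (rather than the purely affine setting of Example~1) is the only non-routine computation. Completeness of $T$ is not an issue, since $T$ is defined as a complete theory.
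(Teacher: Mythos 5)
First, note that the paper does not actually prove this statement: it is labelled a Fact and is quoted from Propositions 4.1 and 4.8 of the authors' earlier paper, so there is no in-paper argument to compare yours with; the closest analogue is the back-and-forth proof of quantifier elimination given for Example~1, which you are consciously imitating. Your NIP argument is correct and standard: each $R_r$ is quantifier-free definable in the o-minimal structure $(\mathbb{R},+,-,1,\le)$, and reducts of NIP structures are NIP.

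The quantifier-elimination half, however, contains a genuine gap, and one of your stated reductions is false. You claim that the quantifier-free type of a tuple is determined by its $\mathbb{Z}$-linear relations together with the distances $d(0,t)$ recorded only on a $\mathbb{Q}$-basis, because the analogue of Lemma~\ref{properties distance} expresses $d(0,t_1+t_2)$ in terms of $d(0,t_1)$ and $d(0,t_2)$ when both are finite. It does not: if $d(0,t_1)=r$ and $d(0,t_2)=s$ are irrational with $r+s=q\in\mathbb{Q}$, then $t_1+t_2$ lies in the monad of $q\cdot 1$ and can fall on either side of it. Concretely, with $\varepsilon$ a positive infinitesimal, the pairs $(\pi+\varepsilon,\,1-\pi)$ and $(\pi-\varepsilon,\,1-\pi)$ have the same linear relations and the same basis distances, yet $R_1(1,a_1+a_2)$ holds for the first and fails for the second, so their quantifier-free types differ. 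Consequently the data you propose to match is insufficient, and the step you yourself defer as ``the only non-routine computation'' is in fact the entire content of the proof: one must show that in the inductive step the distance from $a_{n+1}$ to a single witness $v_0$, together with $\qftp(\bar a)$, determines $d(0,\,m a_{n+1}+u)$ for \emph{every} $\mathbb{Z}$-linear combination — which works only because the ambiguous rational-sum case forces $a_{n+1}$ to be infinitesimally close to the $\mathbb{Q}$-span and is thus absorbed into the sub-case you handle by cuts — and one must verify finite satisfiability of the resulting countable type over $\bar b$, including avoidance of the $\mathbb{Q}$-span of $\bar b,1$. The overall strategy is viable, but as written this is an outline resting on a false intermediate claim rather than a complete proof.
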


Without loss of generality, for convenience we can assume that $\C'$ is a reduct of a monster model of $\Th(\mathbb{R},+,-,1,\leq)$. So it makes sense to use $\leq$.
Let $p\in S_x(\C)$ be the complete $0$-invariant global type determined by 
$$ \{ \neg R_r(x,c)\wedge \neg R_r(c,x) : c\in \C, r \in \mathbb{Q}^+ \}.$$

As in the previous example, let $S_r(x,y):= R_r(x,y) \vee R_r(y,x)$. We say that $x,y$ are {\em related} if $S_r(x,y)$ holds for some $r \in \mathbb{Q}^+$. 
We denote by $E(x,y)$ the equivalence relation on $p(\C')$ relatively defined by $$\bigwedge_{r\in\mathbb{Q}^+}S_r(x,y).$$
In other words, this is the relation on $p(\C')$ of lying in the same coset modulo the subgroup of all infinitesimals in $\C'$ which will be denoted by $\mu$.

Other possible relatively type-definable over a $\C$-small subset of $\C$ equivalence relations on $p(\C')$ are as follows. Take any $c \in \C$. Let $E_c$ be the equivalence relation on $p(\C')$ 
given by $xE_c y$ if and only if $x=y$ or $x+y=c$.
It is clear that this is an equivalence relation on $p(\C')$ relatively defined by a type over $c$. We also have the equivalence relation $E_c^\mu$ given by  $ x E_c^\mu y$ if and only if $xEy$ 
or $(x+y)Ec$, 
which is also relatively defined by a type over $c$.

For any non-empty $\C$-small set $A$ of positive infinitesimals in $\C$ we will consider the equivalence relation $E_A$ on $p(\C')$ given as 
$$\bigwedge_{a \in A} \bigwedge_{n \in \mathbb{N}^+} |x-y| \leq \frac{1}{n} a.$$
Note that this relation is relatively type-definable over $A$ on $p(\C')$  in the original language $L$ by the following condition
$$\bigwedge_{a \in A} \bigwedge_{n \in \mathbb{N}^+} R_1(n(x-y),a) \wedge R_1(n(y-x),a).$$

One can also combine the above examples to produce one more class of equivalence relations on $p(\C')$. 
Take any $c \in \C$ and any non-empty $\C$-small set $A$ of positive infinitesimals in $\C$. Let $\mu_A$ be the infinitesimals in $\C'$ defined by  
$$\bigwedge_{a \in A} \bigwedge_{n \in \mathbb{N}^+} |x| \leq \frac{1}{n} a.$$ 
Then we have the equivalence relation $E_{A,c}$ on $p(\C')$ given by $x E_{A,c} y$ if and only if $xE_Ay$ or $(x+y) E_A c$,
 which is clearly relatively defined on $p(\C')$ by a type over $Ac$.

\begin{teor}\label{theorem: classification}
 The only equivalence relations on $p(\C')$ relatively type-definable over a $\C$-small subset of $\C$ are: the total equivalence relation, equality, $E$, the relations of the form $E_c$ or $E_c^\mu$ (where $c \in \C$), and the relations of the form $E_A$  or $E_{A,c}$ for any non-empty $\C$-small set $A$ of positive infinitesimals in $\C$ and any $c \in \C$. 
\end{teor}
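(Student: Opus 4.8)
\textit{Proof proposal.}

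The plan is as follows. Given an equivalence relation $F$ on $p(\C')$ relatively type-definable over a $\C$-small set, first absorb the parameters into a $\C$-small elementary submodel $\mathcal{M}\prec\C$, so that $F$ is $\aut(\C'/\mathcal{M})$-invariant and is a closed union of complete types over $\mathcal{M}$; it then suffices to classify which such unions can be equivalence relations. For this I would classify, via quantifier elimination, the type $\tp(a,b/\mathcal{M})$ of a pair from $p(\C')$: since the language has only unary and binary relation symbols besides $+,-,1$, this type is governed by the subgroup $H(a,b):=\{(m,n)\in\mathbb{Z}^2: ma+nb\text{ is finite over }\mathcal{M}\}$, which — because $a,b\models p$ forces $(1,0),(0,1)\notin H(a,b)$ — is either $\{0\}$ or a rank-one group $\langle(m_0,n_0)\rangle$ with $m_0n_0\neq 0$; in the rank-one case the type is pinned down further by the element $c_0\in\mathcal{M}$ with $m_0a+n_0b\approx c_0$ and by the cut that the (finite, possibly zero) element $m_0a+n_0b-c_0$ realises over $\mathcal{M}$. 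I would also record, provably by q.e. (using that type-definable subgroups of a reduct of an o-minimal structure are convex, and that the ordering between infinite elements is not $L$-definable), that the proper type-definable-over-$\C$-small subgroups of $(\C',+)$ are exactly $\{0\}$, the infinitesimals $\mu$, and the groups $\mu_A$ for nonempty $\C$-small sets $A$ of positive infinitesimals.

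Next I would dispose of the total relation exactly as in Example~1: if $F$ contains a pair $(a,b)$ with $H(a,b)=\{0\}$, then for arbitrary $c,d\in p(\C')$ a common ``independent'' witness $e\in p(\C')$ (obtained by saturation, with $H(c,e)=H(e,d)=\{0\}$) gives $(c,e),(e,d)\in F$ and hence $(c,d)\in F$, so $F$ is total. From now on assume $F$ is not total, so every non-diagonal pair in $F$ has rank-one $H$. The crux — the step I expect to be hardest — is showing the generator $(m_0,n_0)$ must be $\pm(1,1)$ or $\pm(1,-1)$. Take $(a,b)\in F$, $a\neq b$, with $m_0a+n_0b\approx c_0\in\mathcal{M}$, and suppose $|m_0|\neq|n_0|$ (so $m_0+n_0\neq 0$). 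Pick $\sigma\in\aut(\C'/\mathcal{M})$ with $\sigma(a)=b$ and iterate $b_0=a$, $b_{k+1}=\sigma(b_k)$; then $(b_k,b_{k+1})\in F$ with $m_0b_k+n_0b_{k+1}\approx c_0$, so $(a,b_k)\in F$ for all $k$ by transitivity. Solving the recurrence about its fixed point $a^{*}=c_0/(m_0+n_0)\in\mathcal{M}$ gives $b_k-a^{*}\approx(-m_0/n_0)^{k}(a-a^{*})$; as $a-a^{*}$ is infinite over $\mathcal{M}$, the $b_k$ are pairwise distinct elements of $[a]_F$ with $H(a,b_k)=\langle(m_0^{k},n_0^{k})\rangle$ and $\max(|m_0|,|n_0|)^{k}\to\infty$. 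Since any fixed nontrivial $(m,n)\in\mathbb{Z}^2$ lies in $\langle(m_0^{k},n_0^{k})\rangle$ for only finitely many $k$, every formula ``$|ma+ny-c|>r$'' ($c\in\mathcal{M}$, $r\in\mathbb{Q}^{+}$) holds of cofinitely many $b_k$; hence the ultrafilter limit of $\tp(b_k/\mathcal{M}a)$ along any nonprincipal ultrafilter on $\omega$ contains the partial type asserting $H(a,y)=\{0\}$ together with $p$. A realisation $y^{*}$ of this limit lies in $\overline{\{b_k\}}\subseteq[a]_F$ (here I use that $[a]_F$ is type-definable, hence closed), so $(a,y^{*})\in F$ is an independent pair, forcing $F$ total — a contradiction. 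Therefore $(m_0,n_0)\in\{\pm(1,1),\pm(1,-1)\}$. (The same ultrafilter-limit trick, applied to the sequence $a+ng_0$, also shows that a non-diagonal $(1,\pm 1)$-pair whose difference, resp.\ sum, is finite over $\mathcal{M}$ but not infinitesimal would make $F$ total, so, $F$ being non-total, every $(1,-1)$-pair has infinitesimal difference.)

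With every non-diagonal pair of $F$ now of type $(1,-1)$ with infinitesimal difference or of type $(1,1)$, I would read off the structure. The $(1,-1)$-pairs form a sub-equivalence-relation of $F$ whose difference set $D$ is, by symmetry and transitivity, an $\aut(\C'/\mathcal{M})$-invariant type-definable subgroup of $\mu$, hence $D\in\{\{0\},\mu\}\cup\{\mu_A\}$ by the auxiliary fact; when $F$ has no $(1,1)$-pair this already gives equality, $E$, or some $E_A$. If $F$ has a $(1,1)$-pair, then (composing two pairs of the same $\mathcal{M}$-type through a shared coordinate, and using that the endpoints realise $p$ to avoid making them finite over $\mathcal{M}$) one gets that all $(1,1)$-pairs have $x+y\approx c_0$ for one fixed $c_0\in\mathcal{M}$, and that the set $\mathcal{C}:=\{x+y-c_0:(x,y)\in F\text{ a }(1,1)\text{-pair}\}$ satisfies $\mathcal{C}-\mathcal{C}\subseteq D$ and $\mathcal{C}-D=\mathcal{C}$, so $\mathcal{C}$ is a single $D$-coset. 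Being $\aut(\C'/\mathcal{M})$-invariant, type-definable over a $\C$-small set, and a union of complete $\mathcal{M}$-types, this coset must be an $\mathcal{M}$-rational coset, i.e.\ $\mathcal{C}=m+D$ for some $m\in\mathcal{M}$; thus the $(1,1)$-part of $F$ is exactly ``$x+y\equiv c\pmod D$'' with $c:=c_0+m\in\C$. Matching $(D,c)$ against the three possibilities for $D$ yields precisely $E_{c}$, $E_{c}^{\mu}$, and $E_{A,c}$. Together with the easy converse — each relation in the list was already seen, in the discussion preceding the theorem, to be an equivalence relation relatively type-definable over the stated $\C$-small parameters — this exhausts the list.

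I expect the main obstacle to be the crux step above: manufacturing, out of a hypothetical ``large-coefficient'' pair, an $F$-related pair that is \emph{independent}. The delicate point is that one cannot produce it by a naive compactness argument on $p(\C')$ alone; one has to exploit that the class $[a]_F$ is closed and that the iterates $b_k$ ``escape to infinity relative to $a$'', so that an ultrafilter limit of their types over $\mathcal{M}a$ lands in $[a]_F$ and is independent from $a$.
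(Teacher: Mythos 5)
Your overall strategy is essentially the paper's: conjugate a ``bad'' pair by an automorphism fixing the defining parameters, iterate, and extract a limit by saturation; your bookkeeping via the subgroup $H(a,b)\le\mathbb{Z}^2$ is a clean repackaging of the paper's analysis of the terms $t_q(x,y)$, and your crux step (escape of $\langle(m_0^k,n_0^k)\rangle$) matches the paper's observation that $t_{q^k}(a,b_k')$ is the unique related direction and $q,q^2,\dots$ are pairwise distinct. Two routine repairs are needed throughout: the automorphisms must be taken in $\aut(\C'/\C)$, not merely $\aut(\C'/\mathcal{M})$, since an automorphism fixing only $\mathcal{M}$ need not preserve $p(\C')$ and hence need not preserve $F$ (which lives on $p(\C')^2$); and the ultrafilter limit must be taken of $\tp(b_k/\C a)$ (or the limiting partial type must explicitly include $p(y)$), since $[a]_F$ is only type-definable over $\C a$ and a limit over $\mathcal{M}a$ need not land in $p(\C')$.

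The genuine gap is in the second half. Your ``auxiliary fact'' is false as stated: for an infinite $c\in\C$, the set $\{x:|x|\le\frac{1}{n}c \text{ for all } n\}$ is a proper type-definable subgroup of $(\C',+)$ over a $\C$-small set which is none of $\{0\}$, $\mu$, $\mu_A$. What you actually need is the classification of type-definable (over a $\C$-small subset of $\C$) subgroups $D$ of $\mu$, and this is precisely the content of the paper's second Claim; it is not a one-line consequence of q.e. One must show both that $D$ is convex \emph{and} that it fills the entire region below its cut in $\dcl(B)$ --- i.e., that a partial type of the form $\{0<t-x'<a: a\in A^+\}\cup\{b<t-x': b\in B^+\setminus A^+\}$ is complete over $\dcl(B,x')$, and that $D$ actually contains an element realizing that cut. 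The paper proves the latter by the $\sigma$-iteration (pushing a positive element of $D$ past every $b\in B\setminus A$) and the former by a $\dcl^*$-computation in the divisible ordered group; neither is supplied by ``type-definable subgroups of a reduct of an o-minimal structure are convex,'' which is itself an unproved (and nontrivial) assertion here. Separately, your claim that the coset $\mathcal{C}$ must be ``$\mathcal{M}$-rational,'' $\mathcal{C}=m+D$ with $m\in\mathcal{M}$, is unjustified and is not what you need: the theorem only requires a representative in $\C$, and that follows in one line from the $\aut(\C'/\mathcal{M})$-invariance of $\mathcal{C}$ together with $\kappa$-saturation of $\C$ (realize $\tp(g_0/\mathcal{M})$ inside $\C$; invariance puts the realization in $g_0+D$). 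As written, the route through $\mathcal{M}$-rationality is a wrong turn, though an easily corrected one.
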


In the proof below, by a 
 non-constant term $t(x,y)$ (in the language $L$) we mean an expression $nx +my +k$, where $m,n,k \in \mathbb{Z}$ and $m \ne 0$ or $n \ne 0$.

\begin{proof}
	Let $F(x,y)$ be an arbitrary equivalence relation on $p(\C')$ relatively type-definable over a $\C$-small subset of $\C$. 
	
\begin{claim}
Either $F$ is the total equivalence relation, or $F$ is finer than $E_c^\mu$ (i.e., $F \subseteq E_c^\mu$) for some $c \in \C$ .
\end{claim}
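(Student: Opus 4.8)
The plan is to fix, using Corollary~\ref{existence of a small model set of parameters}, a $\C$-small $\aleph_0$-saturated model $\mathcal{M}\prec\C$ over which $F$ is relatively type-definable, and then to combine quantifier elimination with the fact that every $F$-class $[a]_F$ is type-definable over $\mathcal{M}a$, hence a \emph{closed} subset of the space of types over $\mathcal{M}a$. If $F$ is equality there is nothing to prove (equality is contained in $E^\mu_c$ for every $c$), and if $F$ is total we are in the first alternative; so from now on assume $F$ is neither.

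The first key step is a ``generic pair'' dichotomy. By quantifier elimination there is a single complete type $\pi_g(x,y)$ over $\mathcal{M}$ whose realizations are exactly the pairs $(a,b)$ from $p(\C')$ with $a\ne b$ such that every non-constant term $t(a,b)$ is infinitely far from $\mathcal{M}$; it is realized, e.g. by two distinct members of a Morley sequence of $p$ over $\mathcal{M}$. I claim that if every realization of $\pi_g$ is $F$-related, then $F$ is total: given $c,d\in p(\C')$, take $e\models p\!\upharpoonright_{\mathcal{M}cd}$; then both $(c,e)$ and $(d,e)$ realize $\pi_g$, so $cFe$ and $dFe$, whence $cFd$. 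Since $F$ is not total, and $\pi_g$ is complete over $\mathcal{M}$ while $F$ is $\mathcal{M}$-invariant, it follows that \emph{no} realization of $\pi_g$ is $F$-related.

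Next I would analyse an arbitrary $F$-related pair $(a,b)$ with $a\ne b$. By the previous step $(a,b)$ does not realize $\pi_g$, so some non-constant term $na+mb$ is at finite distance from $\mathcal{M}$; as $a,b\models p\!\upharpoonright_{\mathcal{M}}$ this forces $n,m\ne 0$, and we may take $\gcd(n,m)=1$. Choosing $\sigma\in\aut(\C'/\mathcal{M})$ with $\sigma(a)=b$ and setting $b_j:=\sigma^j(a)$, transitivity of $F$ gives $b_j\in[a]_F$ for all $j$, while a direct quantifier-free computation shows that $b_j$ is related to $(-n/m)^j a$ plus a finite, fixed-scale offset. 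If $|n|\ne|m|$, or if the offset is non-infinitesimal, then $\tp(b_j/\mathcal{M}a)$ accumulates, along a suitable subsequence, at a type generic over $\mathcal{M}a$; by closedness of $[a]_F$ this generic type is realized inside $[a]_F$, producing an $F$-related realization of $\pi_g$, contradicting the first step. Hence $aFb$ with $a\ne b$ forces $a-b\in\mu$, or else $a+b$ is infinitesimally close to an element of $\C$. Finally, if no $F$-related pair has distinct coordinates modulo $\mu$, then $F\subseteq E\subseteq E^\mu_c$ for any $c$; otherwise I would fix one $F$-related pair $(a_0,b_0)$ with $a_0+b_0\in c+\mu$ (enlarging $\mathcal{M}$ so that $c\in\mathcal{M}$), and for an arbitrary $F$-related pair $(a,b)$ with $\neg(aEb)$, transport $(a_0,b_0)$ by an $\mathcal{M}$-automorphism taking $a_0$ to $a$; transitivity, the observation that $b$ together with the image of $b_0$ is too far from $\C$ to be related to $\mathcal{M}$, and the case $a-b\in\mu$ above then force $a+b\in c+\mu$ with the \emph{same} $c$, i.e. $F\subseteq E^\mu_c$.

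The main obstacle is the third step: carefully controlling the quantifier-free types of the iterated images $b_j$ over $\mathcal{M}a$ and pinning down exactly when they approach a generic type (both for ``wrong'' slopes and for non-infinitesimal offsets), and then checking that the witnessing element $c$ can indeed be chosen uniformly — this is precisely where the closedness of the type-definable $F$-class, rather than mere $\mathcal{M}$-invariance, is doing the work.
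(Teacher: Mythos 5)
Your overall strategy is the same as the paper's (a dichotomy between an $F$-related ``generic'' pair forcing totality, versus every $F$-related pair having some non-constant term related to parameters, followed by iterating an automorphism sending $a$ to $b$ and using compactness to manufacture a generic $F$-related pair unless $F\subseteq E^\mu_c$). However, there are two genuine gaps.

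First, you iterate $\sigma\in\aut(\C'/\mathcal{M})$ and also ``transport'' pairs by $\mathcal{M}$-automorphisms. But $p$ is a complete type over all of $\C$, and $\aut(\C'/\mathcal{M})$ does not preserve $p(\C')$; so $b_j:=\sigma^j(a)$ need not lie in $p(\C')$, and neither $b F b_2$ nor (by transitivity) $aFb_j$ follows --- $F$ is only defined on $p(\C')^2$. The same problem affects the choice $e\models p\!\upharpoonright_{\mathcal{M}cd}$ in your totality argument: such an $e$ need not realize $p$, so $cFe$ is not meaningful. Both issues are repairable by working with $\aut(\C'/\C)$ (which acts transitively on $p(\C')$ since $p$ is complete over $\C$) and with $e\models p\!\upharpoonright_{\C cd}$, which is exactly what the paper does; but as written the argument is incorrect.

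Second, and more seriously, your case analysis in the third step --- ``if $|n|\ne|m|$, or if the offset is non-infinitesimal, then $\tp(b_j/\mathcal{M}a)$ accumulates at a generic type'' --- fails when $q=-n/m=-1$, i.e.\ when the related term is $a+b$. In that case the iteration is essentially $2$-periodic ($b_2$ is infinitesimally-or-boundedly close to $a$ again), so the slopes $q^j$ do not vary and the offsets do not grow; no accumulation at a generic type occurs, yet it is perfectly possible that $a+b$ is at finite but \emph{non-infinitesimal} distance from every element of $\C$ (a bounded element of $\C'$ need not lie in $c+\mu$ for any $c\in\C$). Such a pair violates your claimed conclusion and is not excluded by your argument. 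This is precisely why the paper inserts the intermediate step $(**)$, securing a pair with \emph{both} $a+b'$ and $a-b'$ in $p(\C')$, which forces $q\notin\{-1,0,1\}$ and hence pairwise distinct powers $q^k$ before the final compactness argument. To salvage your outline you would need an analogous extra move, e.g.\ passing from $(a,b)$ to $(a,\sigma^2(a))$, whose difference is bounded, and then running your $q=1$ analysis on that pair. The remaining steps (uniqueness of the witnessing $c$, and the reduction of the whole claim to these two alternatives) are fine once these repairs are made.
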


\begin{proof}[Proof of Claim] 
We consider two cases.

	Case 1: There are $ a$, $b\in p(\C')$ such that $aFb$ and $\models \bigwedge_{n \in \mathbb{Q}^+} \bigwedge_{c\in\C}\neg S_n(t(a,b),c)$
	for all non-constant terms $t(x,y)$. Take any $a',b'\in p(\C')$. 
By compactness and $|\C|^+$-saturation of $\C'$, we can find $d'\in p(\C')$ such that $\models \bigwedge_{n \in \mathbb{Q}^+} \bigwedge_{c\in\C}\neg S_n(t(a',d'),c)$ and $\models \bigwedge_{n \in \mathbb{Q}^+} \bigwedge_{c\in\C}\neg S_n(t(b',d'),c)$ for all non-constant terms $t(x,y)$. Then, by q.e. and  
	 \cite[Remark 4.6]{10.1215/00294527-2022-0023},
	$(a',d')\equiv_{\C}(a,b)\equiv_{\C}(b',d')$. Since $F$ is $\C$-invariant, we conclude that $a'Fb'$, hence $F$ is the total equivalence relation.
	
	Case 2: For any $ a$, $b\in p(\C')$ with $aFb$ there are $n \in \mathbb{Q}^+$, $c\in \C$, and a non-constant term $t(x,y)$ such that $\models S_n(t(a,b),c)$. Suppose that for every $c \in \C$, $F$ is not finer than  $E_c^\mu$. We will reach a contradiction, but this will require quite a bit of work.

First, we claim that there are $a,b\in p(\C')$ such that 
$$(*) \;\;\;\;\;\; aFb \;\;\textrm{and}\;\; \models \bigwedge_{q\in \mathbb{Q}^+ } \neg S_q(a,b) \;\; \textrm{and}\;\; a+b\in p(\C').
$$ 

Firstly, note that by (topological) compactness of the intervals $[-r,r]$, $r \in \mathbb{Q}^+$, we easily get that $a\not\in p(\C')$ if and only if $a\in c+\mu$ for some $c\in \C$.
Assume that $(*)$ does not hold, that is, for any $aFb$ we have $aE_c^\mu b$ for some $c\in \C$ or $S_n(a,b)\wedge \neg S_m(a,b)$ for some $m,n\in \mathbb{Q}^+$. Since $F$ is not contained in any $E_c^\mu$, either we get a pair $(a,b) \in F$ such that 
$S_m(a,b) \wedge \neg S_n(a,b)$ for some $m,n\in \mathbb{Q}^+$, or we get two pairs $(a,b), (a',b') \in F$ and elements $c,c' \in \C$ such that $c-c' \notin \mu$ and $a+b -c \in \mu$ and $a'+b'-c' \in \mu$. In this second case, applying an an automorphism of $\C'$ over $\C$  mapping $a'$ to $a$, we may assume that $a'=a$, and so we get $F(b,b')$ and $b-b' \in c-c' + \mu$. Then $b+b' \in 2b' +c -c' + \mu$ is not related to any element of $\C$ (as $2b'$ is not related),  
so $b+b' \in p(\C')$. Since we assumed that $(*)$ fails, we conclude that  $S_m(b,b') \wedge \neg S_n(b,b')$ for some $m,n\in \mathbb{Q}^+$. 
In this way, the whole second case reduces to the first one, i.e. we have a pair $(a,b) \in F$ with  $S_m(a,b) \wedge \neg S_n(a,b)$ for some $m,n\in \mathbb{Q}^+$.
	
	Let $\sigma\in\auto(\C'/\C)$ be such that $\sigma(a)=b$; set $b_k:=\sigma^k(a)$. 
We produced an infinite sequence 
		\begin{center}
		\begin{tikzpicture}
			\node at (0, 0)  {$a$};
			\node at (1, 0)  {$b_1$};
			\node at (2, 0)  {$b_2$};
			\node at (3, 0)  {$\cdots$};
			\draw[-stealth]   (0.1,-0.15) to[out=-60,in=-120](0.9,-0.15);
			\node at (0.5, -0.5)  {$\sigma$};
			\draw[-stealth]   (1.1,-0.15) to[out=-60,in=-120](1.9,-0.15);
			\node at (1.5, -0.5)  {$\sigma$};
			\draw[-stealth]   (2.1,-0.15) to[out=-60,in=-120](2.9,-0.15);
			\node at (2.5, -0.5)  {$\sigma$};
		\end{tikzpicture}
	\end{center}
Then for all $k\in\mathbb{N}^+$, $aFb_k$ and $\models S_{km}(a,b_k)$ and $\models \neg S_{kn}(a,b_k)$. Since  $\models S_{km}(a,b_k)$ and $b_k$ is not related to anything in $\C$, we get $\bigwedge_{q\in \mathbb{Q}^+ } \bigwedge_{c \in \C} \neg S_q(a, -b_k + c)$, 
that is $a+b_k \in p(\C')$.
As we can use arbitrarily large $k$, by compactness (or rather $|\C|^+$-saturation of $\C'$), there exist $b$  such that $(a,b)$ satisfies $(*)$, a contradiction.
 
 We will show now that there is $b'\in p(\C')$ such that 
 $$(**)\;\;\;\;\;\; aFb' \;\; \textrm{with}\;\; a+b',a-b'\in p(\C').$$
 Namely, either $b':=b$ already satisfies it, or $a-b$ is related to some infinite $c\in \C$.  In the latter case, $a-b$ is related precisely to the elements from the set $c+\mathbb{R}+\mu$. 
 
Let $\sigma\in\auto(\C'/\C)$ be such that $\sigma(a)=b$; set $b_k:=\sigma^k(a)$. 
We have
 
 		\begin{center}
		\begin{tikzpicture}
			\node at (0, 0)  {$a$};
			\node at (1, 0)  {$b_1$};
			\node at (2, 0)  {$b_2$};
			\node at (3, 0)  {$\cdots$};
			\draw[-stealth]   (0.1,-0.15) to[out=-60,in=-120](0.9,-0.15);
			\node at (0.5, -0.5)  {$\sigma$};
			\draw[-stealth]   (1.1,-0.15) to[out=-60,in=-120](1.9,-0.15);
			\node at (1.5, -0.5)  {$\sigma$};
			\draw[-stealth]   (2.1,-0.15) to[out=-60,in=-120](2.9,-0.15);
			\node at (2.5, -0.5)  {$\sigma$};
		\end{tikzpicture}
	\end{center}
Then $aFb_k$, and one easily checks that $a-b_k$ is related precisely to the elements from the set $kc + \R+\mu$, and so $a+b_k$ is not related to anything in $\C$. Since $c$ is infinite, the sets $kc + \mathbb{R} + \mu$ are pairwise disjoint for different $k$'s, and so we find the desired $b'$ using compactness (or rather $|\C|^+$-saturation of $\C'$).
 
Since we are working in a divisible group, 
 using \cite[Remark 4.6]{10.1215/00294527-2022-0023}, we can replace the terms $t(x,y)$ in the statement of Case 2 by expressions $t_q(x,y):=nx-my$, where $q=\frac{n}{m}$ is the reduced fraction of $q$
(i.e. $\gcd(m,n)=1$ with $m>0$). In particular, note that no term of the form $t(x,y)=nx$ or $t(x,y)=my$ can occur in Case 2 hypothesis, since that would contradict $a,b\in p(\C')$.
	Notice that for each 
	$d \in p(\C')$  there exists at most one rational $q$ such that $$S_k(t_q(a,d),c)$$ holds for some $c\in \C$ and $k\in \mathbb{Q}^+$. 
For if there existed $q \ne q'\in\mathbb{Q}$ (with reduced fractions $\frac{n}{m}$ and $\frac{n'}{m'}$, respectively), $k,k' \in \mathbb{Q}^+$, and $c,c' \in \C$ such that 
$S_{k}(t_q(a,d),c)$ and $S_{k'}(t_{q'}(a,d),c')$, 
this would imply $S_{n'k+nk'}((mn'-m'n)d,nc'-n'c)$, contradicting that $d \in p(\C')$ when $mn'-m'n\neq 0$. 

We will show now that there exists $b''\in p(\C')$ such that 
$$aFb''\;\; \textrm{and} \;\; \models \bigwedge_{q \in \mathbb{Q}} \bigwedge_{n \in \mathbb{Q}^+}\bigwedge_{c\in\C}\neg S_n(t_q(a,d),c),$$ 
contradicting the assumption of Case 2.

Namely, either $d:=b'$ does the job, or there are $q \in \mathbb{Q}$, $n \in \mathbb{Q}^+$, and $c \in \C$ such that  $S_n(t_q(a,b'),c)$. By the choice of $a$ and $b'$ satisfying $(**)$, we have that $q \notin\{-1,0,1\}$.

Again, let $\sigma\in\auto(\C'/\C)$ be such that $\sigma(a)=b'$; set $b'_k:=\sigma^k(a)$. We have
 
 		\begin{center}
		\begin{tikzpicture}
			\node at (0, 0)  {$a$};
			\node at (1, 0)  {$b_1'$};
			\node at (2, 0)  {$b_2'$};
			\node at (3, 0)  {$\cdots$};
			\draw[-stealth]   (0.1,-0.15) to[out=-60,in=-120](0.9,-0.15);
			\node at (0.5, -0.5)  {$\sigma$};
			\draw[-stealth]   (1.1,-0.15) to[out=-60,in=-120](1.9,-0.15);
			\node at (1.5, -0.5)  {$\sigma$};
			\draw[-stealth]   (2.1,-0.15) to[out=-60,in=-120](2.9,-0.15);
			\node at (2.5, -0.5)  {$\sigma$};
		\end{tikzpicture}
	\end{center}
Then $aFb_k'$ for all $k \in \mathbb{N}^+$. On the other hand, applying powers of $\sigma$, we easily conclude that for every $k \in \mathbb{N}^+$, $t_{q^{k}}(a,b_k')$ is related to some element of $\C$. Hence, by an observation above, we get that for all rationals $r \ne q^k$, $t_r(a,b_k')$ is not related to anything in $\C$. Since $q \notin\{-1,0,1\}$, we know that $q,q^2,\dots$ are pairwise distinct.  So, by compactness, the desired $b''$ exists.
\end{proof}

\begin{claim}
$F \cap E$ is either equality, or $ E$, or $E_A$ for some non-empty $\C$-small set $A$ of positive infinitesimals in $\C$.
\end{claim}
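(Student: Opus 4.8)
The plan is to analyze $G:=F\cap E$, which is an equivalence relation on $p(\C')$ refining $E$ and relatively type-definable over a $\C$-small subset of $\C$ (being the intersection of two such). If $G$ is equality or $G=E$ there is nothing to prove, so I would assume $G$ is neither of these — i.e. $G$ is strictly finer than $E$ and strictly coarser than equality — and aim to produce a nonempty $\C$-small set $A$ of positive infinitesimals of $\C$ with $G=E_A$. Throughout, write $\mu(\C):=\mu\cap\C$ for the group of infinitesimals of $\C$; this is a nontrivial divisible ordered group, by saturation and divisibility of $\C$.

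The first step is to pin down the shape of $G$. Since $G\subseteq E$, every $G$-related pair $(x,y)$ from $p(\C')$ has $z:=x-y\in\mu$. Using quantifier elimination in the ordered divisible group reduct, I would check that for $x,y\in p(\C')$ with $x\mathrel{E}y$ the type $\tp(x,y/\C)$ is determined by $\tp(x-y/\mu(\C))$: any atomic $L$-formula over $\C$ in $(x,y)$ either forces one of its terms to be infinitely far from $\C$ — a condition not involving $z$ — or collapses to a comparison $R_r(mz,c)$ with $m\in\mathbb Z$, $r\in\mathbb Q^+$, $c\in\C$, whose truth value depends only on $\tp(z/\mu(\C))$ (one uses that $mz$ is infinitesimal, so only the infinitesimal part of $c$ is relevant, and that $\tp(mz/\mu(\C))$ is determined by $\tp(z/\mu(\C))$ since $\mu(\C)$ is divisible). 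Combined with the $\C$-invariance of $G$, this shows that $G$ is relatively defined on $p(\C')$ by a partial type of the form $\theta(x-y)$ for some partial type $\theta(z)$ over $\mu(\C)$; in particular $x\mathrel{G}y$ depends only on $x-y$. Setting $H:=\{z\in\mu:\ \models\theta(z)\}$, we get $x\mathrel{G}y\iff x-y\in H$, and $H$ is type-definable over $\mu(\C)$. Transitivity of $G$ makes $H$ a subgroup of $(\mu,+)$: given $z,z'\in H$, any $a\in p(\C')$ yields $a,a+z,a+z+z'\in p(\C')$, witnessing $z+z'\in H$. The hypothesis on $G$ now reads $\{0\}\subsetneq H\subsetneq\mu$.

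The second step is to show that such an $H$ must equal $\mu_A$ for a nonempty $\C$-small set $A$ of positive infinitesimals of $\C$; this then gives $G=E_A$. The crux is that $H$ is \emph{convex} in $\mu$, which I would derive from the o-minimality of the ordered-group reduct together with the group structure: every definable-over-$\mu(\C)$ superset of $H$ is a finite union of intervals, a type-definable subgroup of $\mu$ is non-discrete (has arbitrarily small positive elements), and $H$ jumping over a gap of such a superset would contradict closure under addition. Granting convexity, fix a defining partial type $\{\psi_i:i\in I\}$ for $H$ over $\mu(\C)$; by convexity $H$ lies in the connected component $K_i$ of $\psi_i(\C')$ containing $0$, so $H\subseteq K_i\cap(-K_i)$, which is an interval $(-e_i,e_i)$ with $e_i\in\C^{\geq 0}\cup\{\infty\}$ (its endpoints lying in $\dcl(\mu(\C))\subseteq\C$). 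Discard the indices $i$ with $e_i$ non-infinitesimal (including $e_i=\infty$) — then $\psi_i(\C')\supseteq K_i\supseteq\mu$, so these conditions hold automatically on $\mu$ — and note that $\{0\}\subsetneq H$ rules out $e_i=0$ for the surviving indices, so the surviving $e_i$ are positive infinitesimals of $\C$; put $A:=\{e_i:i\ \text{surviving}\}\subseteq\mu(\C)^{>0}$, which is nonempty because $H\neq\mu$. Then $H=\mu_A$: if $z\in H$ then $nz\in H\subseteq(-e_i,e_i)$, i.e. $|z|\le\tfrac1n e_i$, for all surviving $i$ and all $n$, so $H\subseteq\mu_A$; conversely, if $w\in\mu_A$ then $|w|\le\tfrac12 e_i<e_i$ for every surviving $i$, so $w\in K_i\subseteq\psi_i(\C')$ there, while $w\in\psi_i(\C')$ for the discarded $i$ as well (since $w\in\mu$), whence $w\in H$.

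I expect the main obstacle to be the convexity of $H$ in the second step, i.e. ruling out "gappy" type-definable subgroups of $\mu$; the hypotheses that $H$ is type-definable over a \emph{small} set and lives inside the o-minimal ordered-group reduct are exactly what makes this work, and the argument should parallel the analysis of $\mu$ in \cite[Section 4]{10.1215/00294527-2022-0023}. The other point that needs care is the quantifier-elimination bookkeeping in the first step — checking precisely which atomic formulas over $\C$ survive on $\{(x,y)\in p(\C')^2:x\mathrel{E}y\}$ and that their truth depends only on $\tp(z/\mu(\C))$ — and keeping track of parameters so that the endpoints $e_i$ genuinely lie in $\C$ and $A$ is a $\C$-small subset of $\C$.
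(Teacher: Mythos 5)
Your proposal is correct, and it takes a genuinely different route from the published proof --- in fact it is essentially the route the authors first drafted (a version survives as a commented-out block in the source): reduce to classifying type-definable subgroups of the group $\mu$ of infinitesimals. Your Step 1 checks out by exactly the q.e. computation you indicate: a term $nx+my+c$ evaluated at $(x,x-z)$ with $x\models p$ and $z\in\mu$ is unrelated to $\C$ unless $n+m=0$, in which case it reduces to $-mz+c$ and only the infinitesimal part of $c$ matters; so $F\cap E$ corresponds to a type-definable subgroup $H\leq(\mu,+)$. The published proof avoids this reduction entirely: it fixes a $\dcl$-closed $\C$-small parameter set $B$, defines $A$ as the largest subset of $B$ with $F\cap E\subseteq E_A$, and proves the reverse inclusion directly by showing (via o-minimality and q.e. for divisible ordered abelian groups) that the relevant cuts over $B$ determine complete types, so witnesses can be transported by automorphisms. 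Your approach buys the cleaner structural statement (the relations between equality and $E$ are exactly the type-definable subgroups of $\mu$, which are exactly the $\mu_A$'s); the paper's buys a shorter argument that never needs a convexity lemma.

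Two points in your outline need tightening. First, convexity of $H$: "non-discreteness plus closure under addition" is the right idea, but the step that actually closes it is a compactness argument. If $H$ jumped over a gap $G_0$ of some $\psi_{i_0}$ in a (conjunction-closed) defining type, take $v\in H^+$ beyond the gap; then $\mathbb{Z}v\subseteq H\subseteq\psi(\C')$ for every $\psi$ in the type, and a finite union of intervals covering $\mathbb{Z}v$ must contain a whole segment $[kv,(k+1)v]$, hence contains $kv+g$ for every $g\in G_0\subseteq(0,v)$; by compactness one finds $x,y\in H$ with $y-x\in G_0$, contradicting $H-H=H\subseteq\psi_{i_0}(\C')$. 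Second, the parameters: a defining type over $\mu(\C)$ may have $|\C|$-many formulas, so $A$ would not obviously be $\C$-small. Instead use that $H$ is type-definable over $B\cup\{a\}$ for the original $\C$-small $B\subseteq\C$ and a single $a\models p$, and note that an infinitesimal endpoint $\beta+qa\in\dcl^*(B\cup\{a\})$ forces $q=0$ (otherwise $a$ would be related to an element of $\C$), so all endpoints lie in $\dcl^*(B)\subseteq\C$ and there are only $\C$-small many of them.
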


\begin{proof}[Proof of Claim]

We may assume that $F \subseteq E $, and just work with $F$. Let $B$ be a $\C$-small $\dcl$-closed subset of $\C$ over which $F$ is relatively defined on $p(\C')$. Extending the notation from before the statement of Theorem \ref{theorem: classification}, for any $B' \subseteq B$ put $$E_{B'}: =\{(x,y) \in p(\C')^2: \bigwedge_{b \in B'^{+}} \bigwedge_{n \in \mathbb{N}^+} |y-x| \leq \frac{1}{n}b\},$$
where 
$B'^{+}:= \{ b \in B': 0< b \leq 1\}$. Let $A := \bigcup\{B' \subseteq B: F \subseteq E_{B'}\}$. Then 
$$F \subseteq \bigcap \{E_{B'}: B' \subseteq B \;\; \textrm{such that}\;\; F \subseteq E_{B'}\} = E_A,$$
and, as $F \subseteq E $, we have that $1 \in A$.

We will show that either $F$ is equality, or $F= E_A$. This will clearly complete the proof of the claim (note that if $A$ does not contain any positive infinitesimals, then $E_A = E$).
Suppose $F$ is not the equality. It remains to show that $F \supseteq E_A$.

Case 1: $A=B$. Pick any distinct $\alpha ,\beta \in p(\C')$ such that $\alpha F\beta $. 
Then $$\bigwedge_{a \in A^+} |\alpha-\beta| \leq a.$$ Consider any $\alpha',\beta'  \in p(\C')$ with $\alpha'E_A\beta'$. Then either $\alpha'=\beta'$ (and so $\alpha'F\beta'$), or $\bigwedge_{a \in A^+} 0<|\beta'-\alpha'|\leq a$. In the latter case, it remains to show that $\alpha \beta \equiv_A \alpha' \beta'$  or $\alpha \beta \equiv_A \beta'\alpha'$ 
(as then $\alpha'F\beta'$, since $F$ is relatively type-definable over $A$). Without loss of generality, $\beta > \alpha$ and $\beta'>\alpha'$; equivalently, $R_1(\alpha,\beta)$ and $R_1(\alpha',\beta')$ both hold. Since $\alpha \equiv_{\C} \alpha'$, we can assume that $\alpha=\alpha'$. It suffices to show that $$\{0<t-\alpha \leq a: a \in A^+\}$$ determines a complete type over $\dcl(A,\alpha)$. 
By o-minimality 
of $(\mathbb{R},+,-,1,\leq)$, this boils down to showing that there is no $b \in \dcl^*(A,\alpha)$ with $\bigwedge_{a \in A^+} \alpha<b\leq\alpha+a$, where $\dcl^*$ is computed in the language $\{+,-,1,\leq\}$. If there was such a $b$, then, by q.e. for the theory of divisible ordered abelian groups,  
it would be of the form $\gamma + q\alpha$ for some $\gamma \in A$ and $q \in \mathbb{Q}$, and we would have $\bigwedge_{a \in A^+} 0< \gamma +(q-1)\alpha \leq a$. If $q=1$, we get 
$0<\gamma\leq \frac{1}{2}\gamma<\gamma$, a contradiction. If $q \ne 1$, we get that $\alpha$ is related to an element of $A$ which contradicts the fact that $\alpha \in p(\C')$.

Case 2: $A \subsetneq B$. 
Take any $b\in B\setminus A$. 
Then, by maximality of $A$, $F\nsubseteq E_{A\cup \{b\}}$, so there is $(x,y)\in F$ such that $(x,y)\notin  E_{A\cup \{b\}}=E_A \cap E_b$; swapping $x$ and $y$ if necessary, we may assume that $y>x$. As $F\subseteq E_A$, we have that $(x,y)\notin E_b$. 
In particular, this implies that $b \in B^+$ and $y-x > \frac{1}{n}b$ for some $n\in \mathbb{N}^+$. 
Since $F\subseteq E_A$, we have that $\lvert y-x \rvert< \frac{1}{n}a$ for all $a\in A^+$ and $n\in \mathbb{N}^+$, concluding $\bigwedge_{a \in A^+} b<a$. 

Let $\sigma \in\auto(\C'/\C)$ be such that $\sigma(x)=y$; set $y_k:=\sigma^{k}(x)$. We have
 
 		\begin{center}
		\begin{tikzpicture}
			\node at (0, 0)  {$a$};
			\node at (1, 0)  {$y_1$};
			\node at (2, 0)  {$y_2$};
			\node at (3, 0)  {$\cdots$};
			\draw[-stealth]   (0.1,-0.15) to[out=-60,in=-120](0.9,-0.15);
			\node at (0.5, -0.5)  {$\sigma$};
			\draw[-stealth]   (1.1,-0.15) to[out=-60,in=-120](1.9,-0.15);
			\node at (1.5, -0.5)  {$\sigma$};
			\draw[-stealth]   (2.1,-0.15) to[out=-60,in=-120](2.9,-0.15);
			\node at (2.5, -0.5)  {$\sigma$};
		\end{tikzpicture}
	\end{center}
We easily conclude that $F(x,y_k)$ and $y_k -x > \frac{k}{n}b$ for all $k$; in particular, $y_n -x>b$. By compactness (or rather $|\C|^{+}$-saturation of $\C'$), there exist $x',y' \in p(\C')$ such that $F(x',y')$ and:
\begin{enumerate}
	\item $\bigwedge_{a \in A^+} 0<y'-x'<a$;
	\item $\bigwedge_{b \in B^+\setminus A^+} b<y'-x'$.
\end{enumerate}

We will check now that whenever $x'',y'' \in p(\C')$ satisfy (1) and (2), then $x'y' \equiv_B x''y''$.
For that, without loss of generality, we can assume that $x'=x''$. It remains to show that the partial type 
$$\pi(t/x'):= \{0<t-x'<a: a \in A^+\} \cup \{b<t-x': b \in B^+\setminus A^+ \}$$ 
determines a complete type over $\dcl(B,x')$.  
By o-minimality of $(\mathbb{R},+,-,1,\leq)$, this boils down to showing that there is no $c \in \dcl^*(B,x')$ realizing $\pi(t/x')$, where $\dcl^*$ is computed in the language $\{+,-,1,\leq\}$. If there was such a $c$, then, by q.e. for the theory of divisible ordered abelian groups,  it would be of the form $\beta + qx'$ for some $\beta \in B$ and $q \in \mathbb{Q}$, so 
$$ \bigwedge_{a\in A^+} \bigwedge_{b \in B^+\setminus A^+}  b<\beta +(q-1)x'<a.$$
If $q=1$, we get $\bigwedge_{a \in A^+} 0<\beta<a$, so $\beta \in B^+\setminus A^+$, concluding $\beta < \beta$, a contradiction. 
If $q \ne 1$, as $1\in A$, we get that $x'$ is related to an element of $B$, which contradicts the fact that $x' \in p(\C')$.

Finally, consider any $(\alpha,\beta) \in E_A$, say with $\beta >\alpha$ so $0<\beta-\alpha<\frac{1}{2}a$ for all $a\in A^+$. 
Applying $\sigma \in \auto(\C'/\C)$ mapping $y'$ to $\alpha$, we obtain $\gamma:=\sigma(x')$ such that $\gamma F\alpha$ and $\bigwedge_{b \in B^+\setminus A^+} b<\alpha-\gamma$. Since $F\subseteq E_A$, we get  $b<\alpha -\gamma < \frac{1}{2}a$ for all $b\in B^+\setminus A^+$ and $a\in A^+$. Therefore, $b<\beta - \gamma < a$ for all $b\in B^+\setminus A^+$ and $a\in A^+$, 
So, by the previous paragraph, $\gamma \alpha \equiv_B x'y' \equiv_B \gamma\beta$. As $(x',y') \in F$, we conclude that $(\alpha,\beta) \in F$, which completes the proof of the claim.
\end{proof}

By the above two claims, in order to prove the theorem, it remains to consider the case when $E\cap F\subsetneq F \subseteq E_{c_0}^\mu$ for some $c_0 \in \C$.
By the second claim, we have the following two cases.

Case 1: $E \cap F$ is the equality. We will show that then $F =E_c$ for some $c \in \C$. Consider any $a \in p(\C')$. Since $F \ne \;=$, there exists $b \ne a$ such that $aFb$. Since $F \subseteq E_{c_0}^\mu$ and $E \cap F$ is the equality, we get that such a $b$ is unique: if $b' \ne a$ also satisfies $aFb'$, then $b,b' \in -a+c_0 + \mu$, so $b-b' \in \mu$, hence $b=b'$ because $bFb'$. 
This unique $b$ belongs to $\dcl(\C, a)$, so $g:=a+b-c_0 \in \dcl(\C,a) \cap \mu$.
Since $a$ is not related to any element of $\C$ and $\dcl$ is given by ``terms" with rational coefficients (which follows from q.e. for $T$),  we get that $g \in \C$.
Hence, $c:=a+b=c_0+g \in \C$. Applying automorphisms over $\C$, we get $F=E_c$.

Case 2: $E \cap F=E$ or $E \cap F=E_A$ for some non-empty $\C$-small set $A$ of positive infinitesimals. Since $E=E_{\{1\}}$ (with the obvious extension of the definition of $E_A$), we can write $E \cap F=E_A$, where $A$ is either a non-empty $\C$-small set $A$ of positive infinitesimals or $A=\{1\}$. We will show that then $F =E_{A,c}$ for some $c \in \C$, where $E_{\{1\},c}:=E^\mu_c$. Extend the definition of $\mu_A$ via $\mu_{\{1\}}:=\mu$.

Consider any $a \in p(\C')$. Since $E\cap F \ne F$ and $F \subseteq E_{c_0}^\mu$, there exists $b\in p(\C')$ such that $(a,b) \in F \setminus E$ and $a+b = c_0 +g$ for some $g \in \mu$. As  $E \cap F=E_A$, we get that $\sigma(g)-g \in \mu_A$ for every $\sigma \in \auto(\C'/\C a)$. 

Since $\sigma(g)-g \in \mu_A$ for every $\sigma \in \auto(\C'/\C a)$, we conclude by o-minimality of $(\mathbb{R},+,-,1,\leq)$ that, for every $\alpha\in A$ and $\in \mathbb{N}^+$, there are $c_{\alpha,n}, d_{\alpha,n} \in \dcl^*(\C,a)$ such that $g-\frac{1}{n}\alpha<c_{\alpha,n}\leq g\leq d_{\alpha,n}<g+ \frac{1}{n}\alpha$, 
where  $\dcl^*$ is the definable closure computed in the language $\{+,-,1,\leq\}$ (which coincides with $\dcl$ as both closures are given by ``terms" with rational coefficients). 
Since $a$ is not related to any element of $\C$ and for every $\alpha\in A$ and $n\in \mathbb{N}^+$ the elements $c_{\alpha,n}, d_{\alpha,n}$ are related to zero, using that $\dcl^*$ is given by  ``terms'' with rationals coefficients, we conclude that $c_{\alpha,n}$ and $d_{\alpha,n}$ belong to $\C$ for every $\alpha\in A$ and $n\in \mathbb{N}^+$. Since $A$ is $\C$-small, the set of all $c_{\alpha,n}$ and $d_{\alpha,n}$ is $\C$-small, and hence there is $e \in \C$ with $g-\frac{1}{n}\alpha< e <g + \frac{1}{n}\alpha$ for all $\alpha \in A$ and $n \in \mathbb{N}^+$. Then, $g\in e+\mu_A$ with $e\in \C$, concluding $a+b\in c+\mu_A$, where $c=c_0+e\in \C$, so $aE_{A,c}b$.

 From the conclusion of the previous paragraph and the fact that $E \cap F=E_A$, we obtain $(a+\mu_A) \cup (-a+ c +\mu_A) \subseteq [a]_F$. By automorphisms over $\C$, the same is true for any other element of $p(\C')$ in place of $a$, so $E_{A,c} \subseteq F$. The opposite inclusion easily follows using the assumptions $F \subseteq E_{c_0}^\mu$ and $E \cap F=E_A$. Namely, using automorphisms over $\C$, it is enough to show that $[a]_F \subseteq [a]_{E_{A,c}}$. Consider any $b' \in [a]_F$. Since $(a,b) \in F \setminus E$ and  $F \subseteq E_{c_0}^\mu$, we have that $b' \in a + \mu$ or $b' \in b + \mu$. As $E \cap F=E_A$, we conclude that $b' \in a + \mu_A$ (and so $b'E_{A,c}a$) or $b' \in b+ \mu_A$ (and so $b'E_{A,c}b$ which together with $aE_{A,c}b$ implies $b'E_{A,c}a$).
\end{proof}

\begin{cor}
	The equivalence relation $E$ is the finest equivalence relation on $p(\C')$ relatively type-definable over a $\C$-small set of parameters of $\C$ and with stable quotient, that is $E^{st}=E$
\end{cor}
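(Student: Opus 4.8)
We will deduce the statement from the classification in Theorem~\ref{theorem: classification} together with two facts: that $p(\C')/E$ is stable, and that $p(\C')/F$ is unstable for each of the relations $F\in\{\,=\,,E_c,E_A,E_{A,c}\}$ (for all $c\in\C$ and every non-empty $\C$-small set $A$ of positive infinitesimals in $\C$). Granting these, the argument is short. By Theorem~\ref{theorem: classification}, any relatively type-definable over a $\C$-small subset of $\C$ equivalence relation $F$ on $p(\C')$ with stable quotient must be the total relation, $E$, or some $E_c^\mu$, since the remaining possibilities $=,E_c,E_A,E_{A,c}$ have unstable quotient. Each of the total relation, $E$ and $E_c^\mu$ contains $E$ (indeed $E\subseteq E_c^\mu$ directly from the definition of $E_c^\mu$). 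Since moreover $p(\C')/E$ is stable, it follows that $E$ is the finest relatively type-definable over a $\C$-small subset of $\C$ equivalence relation on $p(\C')$ with stable quotient; in particular $E^{st}$ exists and $E^{st}=E$. (One could also derive the existence of $E^{st}$ from Theorem~\ref{Est exists}, but this is not needed here.)

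For the stability of $p(\C')/E$ I would argue as in the first example, via the criterion \cite[Theorem 2.10]{10.1215/00294527-2022-0023}: it suffices to check $|S_{p(\C')/E}(D)|\le\mathfrak{c}$ for every $D\subseteq p(\C')$ with $|D|\le\mathfrak{c}$ (and we may assume $D$ is $\dcl$-closed). The key point is that the distance to a single related parameter pins down the whole $E$-class: if $c\in p(\C')$ is related to some $d\in D$, then $c-d$ is finite, hence $[c]_E=[d+\mathrm{st}(c-d)]_E$ is determined by the pair $(d,\mathrm{st}(c-d))\in D\times\R$ (using that $\mu$ is convex and $\R\subseteq\C$), giving at most $|D|\cdot\mathfrak{c}=\mathfrak{c}$ classes; while if $c$ is related to no element of $D\cup\C$, then by quantifier elimination $\tp(c/D)$ is the unique such type, so $[c]_E$ contributes a single type over $D$. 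Hence $|S_{p(\C')/E}(D)|\le\mathfrak{c}$ and $p(\C')/E$ is stable. (Alternatively one may invoke \cite[Section 4]{10.1215/00294527-2022-0023}.)

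For the instability statements I would use one configuration, uniformly in $F$. Fix $\alpha\in p(\C')$ and a sequence $(\varepsilon_i)_{i<\omega}$ of positive infinitesimals of $\C'$ whose scales are infinitely separated, i.e.\ $\varepsilon_{i+1}>n\varepsilon_i$ for every $n\in\mathbb{N}$; when $F$ is $E_A$ or $E_{A,c}$, additionally require $\varepsilon_i\notin\mu_A$ for all $i$ (for instance $\varepsilon_0>a$ for some fixed $a\in A$). Using q.e.\ and the saturation of $\C'$, such a sequence can be chosen indiscernible over the small parameter set defining $F$. Put $a_i:=\alpha+\varepsilon_i$ and $b_i:=a_i+1$. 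Then $a_i,b_i\in p(\C')$, the sequence $(a_i,b_i)_{i<\omega}$ is indiscernible over that parameter set, and, since on infinitesimals $R_1(x,y)$ just asserts $x\le y$, one computes $\neg R_1(a_i,b_j)\iff i<j$. Projecting to $p(\C')/F$, the images $([a_i]_F,[b_i]_F)_{i<\omega}$ are still indiscernible, and they are not totally indiscernible: because $2\alpha$ is infinitely far from $\C$, among all representatives of $[a_i]_F$ and of $[b_j]_F$ (and in both orders) the relation $R_1$ holds for some pair if and only if $R_1(a_i,b_j)$ does, that is if and only if $i\ge j$. Hence $\tp([a_0]_F,[b_1]_F)\ne\tp([a_1]_F,[b_0]_F)$, so $p(\C')/F$ is unstable.

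The main obstacle is the bookkeeping in the last step. One must verify that $(a_i,b_i)_{i<\omega}$ really is indiscernible over the relevant (possibly infinite) parameter set: this is exactly where the infinitely separated scales are used, so that the sign of any integer linear combination $\sum_k m_k\varepsilon_{i_k}$ — e.g.\ $\varepsilon_{i_1}-2\varepsilon_{i_2}+\varepsilon_{i_3}$, whose sign is not determined by the order of $i_1<i_2<i_3$ when the scales are comparable — is governed by the highest-index nonzero coefficient and hence depends only on the order of the indices. One must also check that the order-detecting relation survives the passage to the $F$-quotient, i.e.\ that $F$ restricted to the $E$-class $\alpha+\mu$ is either equality or reduction modulo $\mu_A$ and never identifies $\alpha+\mu$ with a different $E$-class. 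Everything else reduces to the classification, a routine type count, and elementary computations with the bounded-distance relations $R_r$.
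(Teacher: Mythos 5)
Your proposal is correct and follows essentially the same route as the paper: stability of $p(\C')/E$ (the paper just cites \cite[Proposition 4.9]{10.1215/00294527-2022-0023}, your type count over arbitrary parameter sets is a valid substitute), the classification of Theorem \ref{theorem: classification}, and an instability witness built from infinitesimal perturbations of a realization of $p$ ordered by $R_1$ (the paper extracts a $\C$-indiscernible sequence from $(a+kc)_{k<\omega}$ for a single $c$ above all $\C$-infinitesimals, rather than hand-building one with separated scales, but the mechanism is identical). If anything, your version is slightly more complete, since you explicitly rule out the incomparable relations $E_c$ and $E_{A,c}$, whereas the paper only treats relations strictly finer than $E$ and leaves the reduction of the incomparable cases (e.g.\ via intersecting with $E$, or via the existence of $E^{st}$ from Theorem \ref{Est exists}) implicit.
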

\begin{proof}
	The quotient $p(\C')/E$ is stable by \cite[Proposition 4.9]{10.1215/00294527-2022-0023}. 
Let $F$ be a relatively type-definable over a $\C$-small subset of $\C$ equivalence relation on $p(\C')$ strictly finer than $E$. 
By Theorem \ref{theorem: classification}, $F=E_A$ for some non-empty $\C$-small set $A$ of positive infinitesimals in $\C$. 
(There is also the case when $F$ is the equality, but then $p(\C')/F=p(\C')$ is clearly unstable.)

Pick any $\alpha \in A$.
It is easy to check that for every $a,d,e\in \C'$ and infinitesimal $c\in \C'$ bigger than all infinitesimals in $\C$ and such that $|d - a|\leq \frac{1}{2}\alpha$ and $|e -(a+c)|\leq\frac{1}{2}\alpha$, we have $d<e$, and so $\neg R_1(e,d)$. And note that ``$|x-y|\leq\frac{1}{2}\alpha$'' can be written as an $L_\alpha$-formula.

	 
Take any $a \in p(\C')$ and infinitesimal $c\in \C'$ bigger than all infinitesimals in $\C$.
	 Using Ramsey's theorem and compactness, 
	 we find a $\C$-indiscernible sequence $(a_i')_{i<\omega}$ having the same Erenfeucht-Mostowski type as the sequence $(a+kc)_{k<\omega}$.
	 Then, the sequence $([a'_i]_F)_{i<\omega}$ is $\C$-indiscernible but not totally $\C$-indiscernible, since the formula $R_1(x,y)$ witnesses that $$ \tp([a'_i]_F,[a'_{i+1}]_F/ \C)\neq\tp([a'_{i+1}]_F, [a'_i]_F/ \C). $$ 
Thus, $p(\C')/F$ is unstable.
\end{proof}


\section*{Acknowledgments}

We would like to thank the anonymous referee for very careful reading and many suggestions which improved the presentation.

\printbibliography

@Control{biblatex-control,
  options = {3.7:0:0:1:0:1:1:0:1:0:0:12:3:1:3:1:0:0:3:1:79:+:+:anyt},
}

@article {MR3796277,
 AUTHOR = {Haskel, Mike and Pillay, Anand},
 TITLE = {On maximal stable quotients of definable groups in {$NIP$}
  theories},
 JOURNAL = {J. Symb. Log.},
 FJOURNAL = {The Journal of Symbolic Logic},
 VOLUME = {83},
 YEAR = {2018},
 NUMBER = {1},
 PAGES = {117--122},
 ISSN = {0022-4812},
 MRCLASS = {03C45 (03C60)},
 MRNUMBER = {3796277},
 MRREVIEWER = {John Goodrick},
 DOI = {10.1017/jsl.2017.26},
% URL = {https://doi.org/10.1017/jsl.2017.26},
}

@misc{hrushovski2021order,
      title={On first order amenability},
      author={Ehud Hrushovski and Krzysztof Krupiński and Anand Pillay},
      year={2021},
      eprint={2004.08306v2},
      archivePrefix={arXiv},
      primaryClass={math.LO},
}

@article{10.1215/00294527-2022-0023,
author = {Krzysztof Krupiński and Adri{\'a}n Portillo},
title = {{On Stable Quotients}},
volume = {63},
journal = {Notre Dame Journal of Formal Logic},
number = {3},
publisher = {Duke University Press},
pages = {373--394},
keywords = {distal theory, hyperimaginary, model-theoretic components, stable quotient},
year = {2022},
doi = {10.1215/00294527-2022-0023},
%URL = {https://doi.org/10.1215/00294527-2022-0023},
}

@article{https://doi.org/10.1002/malq.200610046,
author = {Onshuus, Alf and Peterzil, Ya'acov},
title = {A note on stable sets, groups, and theories with NIP},
journal = {Mathematical Logic Quarterly},
volume = {53},
number = {3},
pages = {295--300},
keywords = {Independence property, stability, o-minimality, þ-forking},
doi = {10.1002/malq.200610046},
year = {2007},
%url = {https://onlinelibrary.wiley.com/doi/abs/10.1002/malq.200610046},
}

@article{doi:10.1142/S0219061319500120,
author = {Krupi\'{n}ski, Krzysztof and Newelski, Ludomir and Simon, Pierre},
title = {Boundedness and absoluteness of some dynamical invariants in model theory},
journal = {Journal of Mathematical Logic},
volume = {19},
number = {02},
pages = {1950012},
year = {2019},
doi = {10.1142/S0219061319500120},
%URL = {https://doi.org/10.1142/S0219061319500120},
%eprint = {https://doi.org/10.1142/S0219061319500120},
}

@article{KPR18,
	title = {Topological dynamics and the complexity of strong types},
	author = {Krupiński, Krzysztof and Pillay, Anand and Rzepecki, Tomasz},
	doi = {10.1007/s11856-018-1780-3},
	pages = {863-932},
	journal = {Israel J. Math.},
	date = {2018},
	volume = {228},
	keywords = {03C45, 54H20, 03E15, 54H11, Mathematics - Logic},
%url = {https://doi.org/10.1007/s11856-018-1780-3},
}

@article{doi:10.1142/S0219061303000297,
author = {Ben-Yaacov, Itay},
title = {Simplicity in compact abstarct theories},
journal = {Journal of Mathematical Logic},
volume = {03},
number = {02},
pages = {163--191},
year = {2003},
doi = {10.1142/S0219061303000297},
%URL = { https://doi.org/10.1142/S0219061303000297},
%eprint = {https://doi.org/10.1142/S0219061303000297},
}
\nocite{*}
\end{document}